\begin{document}
\title[A Higher Algebraic Approach to Liftings of Modules over Derived Quotients]{A Higher Algebraic Approach to Liftings of Modules over Derived Quotients}

\author[R. Ishizuka]{Ryo Ishizuka}
\address{Department of Mathematics, Institute of Science Tokyo, 2-12-1 Ookayama, Meguro, Tokyo 152-8551}
\email{ishizuka.r.ac@m.titech.ac.jp}

\thanks{2020 {\em Mathematics Subject Classification\/}: 13D07, 55P43}

\keywords{Liftings of modules, Auslander--Reiten conjecture, derived quotients, animated rings}


\begin{abstract}
    We show a certain existence of a lifting of modules under the self-\(\Ext^2\)-vanishing condition over the ``derived quotient'' by using the notion of higher algebra.
    This refines a work of Auslander--Ding--Solberg's solution of the Auslander--Reiten conjecture for complete interesctions.
    Together with Auslander's zero-divisor theorem, we show that the existence of such \(\Ext\)-vanishing module over derived quotients is equivalent to being local complete intersections.
\end{abstract}

\maketitle 

\setcounter{tocdepth}{1}
\tableofcontents

\section{Introduction}

The \emph{Auslander--Reiten conjecture} was first proposed by Auslander and Reiten in 1975 \cite{auslander1975Generalized}.
This conjecture has posed a significant challenge in the representation theory of algebras and (commutative) ring theory.

\begin{conjecture}[Auslander--Reiten conjecture] \label{AuslanderReiten}
    Let \(R\) be a Noetherian local ring.
    If a finitely generated \(R\)-module \(M\) satisfies \(\Ext_R^i(M, M \oplus R) = 0\) for all \(i \geq 1\), then \(M\) is a projective \(R\)-module.
\end{conjecture}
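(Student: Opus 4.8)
In full generality this is the Auslander--Reiten conjecture itself, which remains open; accordingly, what follows is a plan of attack via the derived-quotient formalism of this paper, which I expect to carry the conjecture through whenever \(R\) is a local complete intersection and, more importantly, to pinpoint where the obstruction to the general case sits. The overall idea is to model \(R\) as the base of a tower of animated rings sitting over a regular local ring, to propagate the module \(M\) up that tower, and to use the hypothesis \(\Ext^{\ge 1}_R(M,M)=0\) to kill the obstructions to each lifting step.

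\textbf{Reduction and set-up.} First I would pass to the completion: since \(R\to\widehat R\) is faithfully flat, \(\Ext\) is unchanged under \(-\otimes_R\widehat R\), the hypothesis survives, and \(\widehat M\) projective over \(\widehat R\) forces \(M\) free over \(R\); so I may assume \(R=Q/I\) with \(Q\) a regular local ring (Cohen's structure theorem). Fix a generating sequence \(\underline f=(f_1,\dots,f_c)\) of \(I\) and form the derived (Koszul) quotient \(A=Q/\!\!/\underline f\), an animated ring with \(\pi_0 A=R\); restriction along \(A\to R\) realizes \(M\) as an object of \(D(A)\). The point of working with \(A\) rather than with the honest partial quotients \(Q/(f_1,\dots,f_j)\) is that the derived quotient is defined for any sequence and carries a clean obstruction theory for deforming module structures.

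\textbf{Lifting and descent.} The core step is to build a tower \(A=A_c\to A_{c-1}\to\cdots\to A_0=Q\), each map adjoining one fewer relation, and to lift \(M\) across each stage. The obstruction to refining an \(A_j\)-module structure on \(M\) to an \(A_{j-1}\)-module structure is a class in a (shifted) self-\(\Ext\) group of \(M\); transporting the hypothesis \(\Ext^i_R(M,M)=0\) for \(i\ge1\) into \(D(A)\) makes all of these obstructions vanish, so \(M\) lifts to a perfect complex \(N\in D(Q)\) with \(N\otimes^{\mathbf L}_Q R\simeq M\). Over the regular ring \(Q\) the inherited vanishing \(\Ext^{\ge1}_Q(N,N\oplus Q)=0\) forces \(N\) to be a free module concentrated in degree \(0\) (Auslander--Buchsbaum together with regularity), and then \(M\cong N/\underline f N\cong R^n\) is free over \(R\) because \(N\) free kills the higher \(\mathrm{Tor}^Q_{\ge1}(N,R)\). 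This proves the conjecture when \(I\) is generated by a regular sequence, i.e.\ for complete intersections, recovering and refining Auslander--Ding--Solberg.

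\textbf{The main obstacle.} The genuine difficulty -- and the reason the conjecture is still open -- is exactly that for a general ideal \(I\) the tower cannot be made to terminate at a regular ring: the derived quotient \(Q/\!\!/\underline f\) has nontrivial higher homotopy (the Koszul homology of \(\underline f\)), the lifting obstructions no longer live in groups controlled solely by \(\Ext^{\ge1}_R(M,M\oplus R)\), and there is no free module at the top to descend from. Controlling these higher Andr\'e--Quillen/\(\mathrm{Tor}\)-type obstructions by the stated \(\Ext\)-vanishing is the crux; this paper handles it for complete intersections and isolates it as the remaining gap in general. For the converse direction -- that the mere existence of such an \(\Ext\)-vanishing module over the derived quotient forces \(R\) to be a complete intersection -- I would instead feed the resulting finiteness of complete-intersection dimension into Auslander's zero-divisor theorem.
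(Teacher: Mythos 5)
The statement you are asked to ``prove'' is the Auslander--Reiten conjecture itself, which the paper only states; it is an open problem, and the paper proves no claim with this label. Your proposal correctly recognizes this and offers a plan rather than a proof, and your plan does track the paper's machinery at a high level: form the derived quotient tower over a regular local ring, lift \(M\) step by step, use an obstruction class in a self-\(\Ext^2\) group, and conclude via Auslander's zero-divisor theorem. However, there is a genuine gap at the heart of your plan.

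You write that ``transporting the hypothesis \(\Ext^i_R(M,M)=0\) for \(i\ge1\) into \(D(A)\) makes all of these obstructions vanish.'' This is precisely the step the paper shows does \emph{not} work. The obstruction to lifting lives in \(\Ext^2_{A/^L\underline{f}}(M,M)\), i.e.\ in the module category of the derived quotient, not of the discrete ring \(R=\pi_0(A/^L\underline{f})\). The relationship between these two \(\Ext\)-groups goes in the opposite direction from what you need: by \Cref{AssumptionLemma2}, \(\Ext^i_R(M,N)\) is a \emph{direct summand} of \(\Ext^i_{A/^L\underline{f}}(M,N)\), so vanishing over the derived quotient forces vanishing over \(R\), but not conversely. \Cref{ExtVanishingExample} gives an explicit case where \(\Ext^i_R(M,N)=0\) yet \(\Ext^i_{A/^L\underline{f}}(M,N)\neq0\). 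Thus the Auslander--Reiten hypothesis on \(M\) does not, by itself, kill the lifting obstruction; one must assume the strictly stronger vanishing \(\Ext^2_{A/^L\underline{f}}(M,M)=0\), which is what \Cref{LiftingCorollary} does.

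Moreover, the paper's ``full circle'' result (\Cref{EquivLCI}) shows that, once \(A\) is regular local, the existence of any nonzero finitely generated \(R\)-module with \(\Ext^2_{A/^L\underline{f}}(M,M)=0\) already forces \(\underline{f}\) to be a regular sequence. So the derived-quotient lifting mechanism, as developed here, cannot in principle reach any ring beyond complete intersections; the extra strength in the hypothesis is exactly what collapses the general case back to the classical one. Your closing paragraph on the converse direction is on the right track (the paper combines the lifting theorem with Auslander's zero-divisor theorem applied to a finite-projective-dimension lift \(L\) over the regular local ring, using that \(L/^L\underline{f}\simeq M\) discrete forces \(\underline{f}\) to be regular on \(L\)), but the crucial point you should internalize is that the central obstruction group is taken over \(A/^L\underline{f}\), not \(R\), and that \(\Ext_R\)-vanishing does not control it.
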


We suffice to prove the conjecture for \emph{complete} Noetherian local ring \(R\) (see, for example, \cite[Remark 2.3]{kumashiro2023AuslanderReiten}).
To date, the (partial) solutions to this conjecture have been very diverse (see, for instance, \cite[Appendix A.2]{christensen2010Algebras} and the introduction of \cite{kimura2022Maximal,kimura2023Auslander} for recent works).
These solutions mainly focus on the singularity of \(R\).

In particular, Auslander, Ding, and Solberg \cite{auslander1993Liftings} proved the conjecture for any local complete intersection \(R\).
Their strategy is taking a representation \(R \cong A/(f_1, \dots, f_r)\) where \(A\) is a Noetherian local ring and \(f_1, \dots, f_r\) is a regular sequence on \(A\) and providing the following ``lifting property'' of modules:

\begin{theorem}[{\cite[Proposition 1.7]{auslander1993Liftings}\footnote{Also, Yoshino proved this theorem for bounded below complexes whose terms are finite free in \cite[Lemma 3.2]{yoshino1997Theory}.}}] \label{LiftingLemmaADS}
    Let \(A\) be a Noetehrian local ring and let \(f_1, \dots, f_r\) be a regular sequence on \(A\).
    Set \(R \defeq A/(f_1, \dots, f_r)\).
    If a finitely generated \(R\)-module \(M\) satisfies \(\Ext^2_R(M, M) = 0\), then \(M\) is liftable to \(A\), that is, there exists a finitely generated \(A\)-module \(L\) such that \(L \otimes^L_A R \cong M\).
\end{theorem}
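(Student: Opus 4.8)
The plan is to strip the regular sequence down to a single element and then to recognize the existence of a lift as the vanishing of one obstruction class in \(\Ext^2_R(M,M)\).

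\emph{Reduction to \(r=1\).} I would induct on \(r\), the case \(r=0\) being trivial (take \(L=M\)). For the step, put \(A' \defeq A/(f_1,\dots,f_{r-1})\), so \(\bar f_r\) is a non-zero-divisor on \(A'\) and \(R = A'/(\bar f_r)\). Granting the case \(r=1\) applied to \((A',\bar f_r)\) produces a finitely generated \(A'\)-module \(L'\) with \(L'\otimes^L_{A'} R \simeq M\), and the crucial point is that then \(\Ext^2_{A'}(L',L') = 0\) as well. Indeed, \(R\) is a perfect (hence dualizable) \(A'\)-complex---the two-term Koszul complex on \(\bar f_r\)---so the extension--restriction adjunction together with the projection formula gives
\[
\mathrm{RHom}_R(M,M)\;\simeq\;\mathrm{RHom}_{A'}(L',\,L'\otimes^L_{A'}R)\;\simeq\;\mathrm{RHom}_{A'}(L',L')\otimes^L_{A'}R,
\]
whence a cofiber sequence \(\mathrm{RHom}_{A'}(L',L') \xrightarrow{\ \bar f_r\ } \mathrm{RHom}_{A'}(L',L') \to \mathrm{RHom}_R(M,M)\). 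Its long exact cohomology sequence, together with \(\Ext^2_R(M,M)=0\), forces \(\bar f_r\) to act surjectively on the finitely generated \(A'\)-module \(\Ext^2_{A'}(L',L')\), so Nakayama's lemma makes that module zero. Now \(f_1,\dots,f_{r-1}\) is a regular sequence on \(A\) with quotient \(A'\) and \(L'\) satisfies the hypothesis over \(A'\); the inductive hypothesis thus provides a finitely generated \(A\)-module \(L\) with \(L\otimes^L_A A' \simeq L'\), and then \(L\otimes^L_A R \simeq (L\otimes^L_A A')\otimes^L_{A'} R \simeq M\).

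\emph{The case \(r=1\): the obstruction class.} Write \(f=f_1\) and \(R=A/fA\). I would fix a resolution \((F_\bullet,d_\bullet)\) of \(M\) by finitely generated free \(R\)-modules, choose \(A\)-linear lifts \(\widetilde F_i\) (free of the same rank) and \(\widetilde d_i\), and observe that since \(\widetilde d_{i-1}\widetilde d_i\) reduces modulo \(f\) to \(d_{i-1}d_i = 0\) and \(f\) is a non-zero-divisor on \(\widetilde F_{i-2}\), there is a unique \(s_i\colon \widetilde F_i \to \widetilde F_{i-2}\) with \(\widetilde d_{i-1}\widetilde d_i = f\, s_i\). Associativity of composition makes the reductions \((\bar s_i)\colon F_i\to F_{i-2}\) a chain map, and its class \(\chi_M \in \Ext^2_R(M,M)\) is independent of all choices (changing \(\widetilde d_i\) by \(f e_i\) alters \((\bar s_i)\) by the coboundary \(d\bar e + \bar e d\)). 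In the language of this paper, \(\chi_M\) is precisely the obstruction---furnished by the deformation theory of modules over animated rings---to lifting \(M\in\mathrm{Mod}_R\) along the square-zero extension \(A/f^2A \to A/fA\) whose kernel \(fA/f^2A\) is free of rank one, under the identification \(\Ext^2_R(M,\,M\otimes^L_R (fA/f^2A))=\Ext^2_R(M,M)\).

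\emph{The case \(r=1\): building the lift.} One implication is immediate: an \(A\)-free resolution of a lift reduces modulo \(f\) to an \(R\)-free resolution of \(M\), and computing \(\chi_M\) from that resolution together with the honest \(A\)-complex itself gives \(s_\bullet=0\), so if \(M\) is liftable then \(\chi_M=0\). For the converse---which is all we need, as \(\Ext^2_R(M,M)=0\) makes \(\chi_M=0\) automatic---I would write \(\bar s = d\bar e + \bar e d\), lift \(\bar e\) to \(A\), and replace \(\widetilde d_\bullet\) by \(\widetilde d_\bullet - f e_\bullet\). This makes each composite divisible by \(f^2\); the new discrepancy again represents a class in \(\Ext^2_R(M,M)=0\), so the correction iterates, the successive corrections being divisible by \(f,f^2,f^3,\dots\). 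Their \(f\)-adic limit gives differentials \(\widetilde d^{(\infty)}_\bullet\) with \(\widetilde d^{(\infty)}_{i-1}\widetilde d^{(\infty)}_i = 0\), i.e.\ an honest complex \(G_\bullet\) of finitely generated free \(A\)-modules with \(G_\bullet \otimes_A R = F_\bullet\). Since \(F_\bullet\) is a resolution, \(G_\bullet\) is acyclic in positive degrees modulo \(f\); the long exact sequence of \(0 \to G_\bullet \xrightarrow{\ f\ } G_\bullet \to G_\bullet/fG_\bullet \to 0\), together with \(f\) being a non-zero-divisor on each \(\widetilde F_i\), shows that \(f\) acts invertibly on \(H_i(G_\bullet)\) for \(i\geq 1\); as these are finitely generated and \(f\in\mathfrak m\), Nakayama gives \(H_i(G_\bullet)=0\) for \(i\geq 1\). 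Hence \(L \defeq H_0(G_\bullet) = \operatorname{coker}(\widetilde d^{(\infty)}_1)\) is a finitely generated \(A\)-module whose \(A\)-free resolution \(G_\bullet\) satisfies \(L\otimes^L_A R \simeq G_\bullet\otimes_A R = F_\bullet \simeq M\).

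\emph{Where the work is.} The one genuinely non-formal step is the \(f\)-adic convergence of the iterated corrections---equivalently, the passage up the tower \(\{A/f^nA\}\) in the higher-algebraic picture---for this needs \(\mathrm{Hom}_A(\widetilde F_i,\widetilde F_{i-1})\), and hence morally \(A\) itself, to be \(f\)-adically complete. So the hard part is the descent to a general Noetherian local \(A\): one must algebraize over \(A\) the lift built over the \(f\)-adic completion, or else reorganize the construction so as to stay over \(A\). I expect this to be the main obstacle, and presumably it is there that the derived-quotient machinery of the present paper is put to use. (For the Auslander--Reiten application one reduces at the outset to complete \(R\), hence complete \(A\), so the difficulty is absent there.)
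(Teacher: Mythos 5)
Your proposal is essentially the classical Auslander--Ding--Solberg argument, and it is the same argument that the paper's own generalization (\Cref{LiftingCorollary}) reproduces in higher-algebraic language when specialized to regular sequences. The reduction to $r=1$ is exactly what the paper does in the proof of \Cref{LiftingCorollary}: you use the extension--restriction adjunction and the projection formula against the perfect $A'$-complex $R$ to get the cofiber sequence on $\mathrm{RHom}$, where the paper applies $\Ext^i_{\Gamma_{t-1}}(L_{t-1},-)$ to the fiber sequence $L_{t-1}\xrightarrow{\times x_t}L_{t-1}\to M$ and invokes tensor-forgetful adjunction---these are the same computation, and your surjectivity-plus-Nakayama conclusion matches line for line. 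For $r=1$, your obstruction class $\chi_M$ built from $\widetilde d_{i-1}\widetilde d_i=f\,s_i$ is the classical incarnation of the class $\mathrm{ob}(M,L,n)\in\Ext^2_{A_1}(M,M)$ that the paper constructs via the fiber sequence $\theta_L'$ in \Cref{LiftingExtElement} and \Cref{SplitLifting}; when $f$ is regular, $A_1=A/^L f$ is the discrete ring $A/f$, so the two obstruction spaces coincide. Your iterate-and-take-the-limit step is the content of the paper's \Cref{LiftingSequence}.

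The completeness worry you raise at the end is real, and you are right to flag it, but it is not a gap in your argument relative to what the sources prove: \cite[Proposition 1.7]{auslander1993Liftings} is proved under the hypothesis that $A$ is $(f)$-adically complete (their Theorem 1.2, which passes to the inverse limit, explicitly requires it), and the present paper's own machinery likewise carries completeness throughout---$\Lambda$ is assumed complete Noetherian local, and $A$ almost perfect over $\Lambda$ forces $\pi_0(A)$ to be $\mathfrak m$-adically complete by \Cref{NoetherianAlmostPerfect}, which is exactly what makes the limit in \Cref{LiftingSequence} produce a finitely generated module. The statement of \Cref{LiftingLemmaADS} as printed omits ``complete,'' but the proof you sketch (and the one in the literature) requires it, and in the Auslander--Reiten application one reduces to complete $R$, hence complete $A$, at the outset as the introduction notes. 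So: your argument is correct as written for $A$ complete, your diagnosis of where completeness enters (convergence of the $f$-adic corrections in $\mathrm{Hom}_A(\widetilde F_i,\widetilde F_{i-1})$) is exactly right, and descending from $\widehat A$ to $A$ in general is not a formal step---which is why the hypothesis belongs in the statement.
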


If \(A\) is a regular local ring (this is possible by Cohen's structure theorem if a given \(R\) is a complete Noetherian local ring), then the above theorem implies that \(M\) has finite projective dimension, and thus the Auslander--Reiten conjecture holds for the local complete intersection \(R\).

Previously, as this theorem, there are a lot of desirable phenomena if we take quotients by \emph{regular sequences}. However, with the advancement of DG methods and higher algebra, it has become possible to deal with quotients by any sequences of elements (not necessarily regular) by using \emph{derived quotients}; let \(A\) be a Noetherian local ring and let \(f_1, \dots, f_r\) be a (possibly non-regular) sequence of elements of \(A\). The derived quotient \(A/^L\underline{f} \defeq A/^L(f_1, \dots, f_r)\) (\Cref{DefDerivedQuotient}) is a ``commutative algebra object'' in \(D(A)\) and has information not only the usual quotient \(A/(f_1, \dots, f_r)A\) but also their torsions such as \((0:_A f_i)\). In fact, as a complex of \(A\)-modules, \(A/^L(f_1, \dots, f_r)\) is the Koszul complex \(\Kos(f_1, \dots, f_r; A)\) and thus \(A/^L(f_1, \dots, f_r)\) is isomorphic to \(A/(f_1, \dots, f_r)A\) if and only if \(f_1, \dots, f_r\) is a (Koszul-)regular sequence on \(A\).
In this paper, we refine the work of Auslander--Ding--Solberg above (\Cref{LiftingLemmaADS}) by using the notion of higher algebra. This is our first main theorem:

\begin{theorem}[Special case of \Cref{LiftingCorollary}] \label{MainTheorem2Derived}
    Let \(A\) be a Noetherian local ring and let \(f_1, \dots, f_r\) be a sequence of elements of \(A\).
    Set \(A/^L\underline{f} = A/^L (f_1, \dots, f_r)\) and \(R \defeq A/(f_1, \dots, f_r)A\).
    If a finitely generated \(R\)-module \(M\) satisfies \(\Ext_{A/^L\underline{f}}^2(M, M) = 0\), then there exists a finitely generated \(A\)-module \(L\) such that \(M \cong L \otimes^L_A A/^L\underline{f}\).
\end{theorem}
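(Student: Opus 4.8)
The plan is to mimic the strategy of \Cref{LiftingLemmaADS} --- lifting a free resolution one homological degree at a time, with the obstruction landing in a self-\(\Ext^2\) group --- but carried out over the \emph{animated} ring \(A/^L\underline{f}\) and, crucially, \emph{one element at a time}. Using the Künneth-type identity \(A/^L(f_1,\dots,f_r)\simeq A/^Lf_1\otimes^L_A\cdots\otimes^L_A A/^Lf_r\), set \(\Lambda_i\defeq A/^L(f_1,\dots,f_i)\), so that there is a tower \(A=\Lambda_0\to\Lambda_1\to\cdots\to\Lambda_r=A/^L\underline{f}\) of Noetherian animated local rings with \(\pi_0(\Lambda_i)=A/(f_1,\dots,f_i)A\), each \(\Lambda_i\) equal to \(\Lambda_{i-1}/^L\bar f_i\) for \(\bar f_i\in\pi_0(\Lambda_{i-1})\) the image of \(f_i\) (we may assume \(\underline{f}\subseteq\mathfrak m\), else \(A/^L\underline{f}=0\)). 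It then suffices to prove the following \emph{one-step lifting lemma}: for \(B\) a Noetherian animated local ring, \(g\in\mathfrak m\subseteq\pi_0(B)\), and \(N\) an almost perfect \(B/^Lg\)-module (i.e.\ represented by a bounded-above complex of finitely generated free modules) with \(\Ext^2_{B/^Lg}(N,N)=0\), there exists an almost perfect \(B\)-module \(\tilde N\) with \(\tilde N\otimes^L_B(B/^Lg)\simeq N\). Granting this, start from \(N_r\defeq M\) (almost perfect over \(\Lambda_r\), being finitely generated over \(R=\pi_0(\Lambda_r)\)) and lift successively to almost perfect modules \(N_i\) over \(\Lambda_i\) with \(N_{i-1}\otimes^L_{\Lambda_{i-1}}\Lambda_i\simeq N_i\); telescoping, \(L\defeq N_0\) is an almost perfect \(A\)-module with \(L\otimes^L_A A/^L\underline{f}\simeq M\).

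The heart is the one-step lemma. Here I would model \(C\defeq B/^Lg\) by its Koszul incarnation as the dg-\(B\)-algebra \(B\langle e\rangle\) with \(|e|=1\), \(de=g\) (passing to a strict model of \(B\) if needed), so that a \(C\)-module is a \(B\)-complex carrying a degree-one operator \(h=e\cdot(-)\) with \(h^2=0\) and \(\partial h+h\partial=g\cdot\mathrm{id}\), and a semifree \(C\)-resolution of \(N\) takes the form \((C\otimes_B V,D)\) with \(D=d_0+e\,d_1\), the condition \(D^2=0\) amounting to \(d_0^2=\pm g\,d_1\), \([d_0,d_1]=0\), \(d_1^2=0\). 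The \(C\)-module \(N\) is induced from \(B\) exactly when the resolution can be chosen with \(d_1=0\), in which case \(\tilde N\defeq(V,d_0)\) is the desired lift, as \((V,d_0)\otimes^L_B C\simeq(C\otimes_B V,D)\simeq N\) as \(C\)-modules; and the leading obstruction to arranging \(d_1=0\) is governed by \(\Ext^2_C(N,N)\). One then removes it, together with the full (a priori infinite) tower of higher corrections, by successively replacing \(d_0\) with \(d_0+g\,\eta_0+g^2\eta_1+\cdots\) exactly in the spirit of \Cref{LiftingLemmaADS}, each step being unobstructed because the controlling group is the given \(\Ext^2_C(N,N)=0\). (Conceptually this is the statement that the deformation theory of \(C\)-modules along \(B\to C\) is governed by self-\(\Ext^2\) groups, a reflection of the relative cotangent complex of \(B\to B/^Lg\) being free of rank one in homological degree one.)

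Two bookkeeping points make the induction legitimate. First, the hypothesis \(\Ext^2=0\) \emph{propagates} along each lift. Since \(B/^Lg=\Kos(g;B)=\mathrm{cofib}(g\colon B\to B)\) as a \(B\)-module, applying \(\mathrm{RHom}_B(\tilde N,-)\) to the cofiber sequence \(\tilde N\xrightarrow{\,g\,}\tilde N\to\tilde N\otimes^L_B(B/^Lg)\) identifies \(\mathrm{RHom}_{B/^Lg}(N,N)\) with the cofiber of multiplication by \(g\) on \(\mathrm{RHom}_B(\tilde N,\tilde N)\), whence a long exact sequence containing
\[
\Ext^2_B(\tilde N,\tilde N)\xrightarrow{\,g\,}\Ext^2_B(\tilde N,\tilde N)\longrightarrow\Ext^2_{B/^Lg}(N,N)=0 ,
\]
so \(g\) acts surjectively on the finitely generated \(\pi_0(B)\)-module \(\Ext^2_B(\tilde N,\tilde N)\), forcing \(\Ext^2_B(\tilde N,\tilde N)=0\) by Nakayama. (This is exactly why the reduction to a single element is indispensable: handling the quotient by all \(r\) elements at once would a priori produce obstructions in \(\Ext^2,\dots,\Ext^{r+1}\), whereas the one-at-a-time procedure only ever meets \(\Ext^2\), which we can feed back into the hypothesis.) Second, at the bottom \(L\) is only \emph{a priori} almost perfect and must be shown to be a genuine module: \(L\otimes^L_A A/^L\underline{f}\simeq L\otimes^L_A\Kos(f_1,\dots,f_r;A)\), and peeling off the Koszul factors one at a time, if \(L'\otimes^L_A\Kos(f_j;A)=\mathrm{cofib}(f_j\colon L'\to L')\) is discrete with \(f_j\in\mathfrak m\), then for \(i\neq 0\) multiplication by \(f_j\) is surjective on the finitely generated \(A\)-module \(H_i(L')\), hence \(H_i(L')=0\) and \(L'\) is discrete. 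Running this down from \(M\) through \(L\otimes^L_A\Kos(f_1,\dots,f_{r-1};A),\dots\) shows \(L\) is discrete, hence a finitely generated \(A\)-module.

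The main obstacle is precisely the one-step lemma: pinning down the obstruction group as \(\Ext^2_{B/^Lg}(N,N)\) and, above all, showing that this single vanishing kills the entire tower of higher corrections. This is the delicate point already present in the proof of \Cref{LiftingLemmaADS}, and carrying it out over an animated base \(B\) rather than a discrete one --- with the underlying DG/Koszul and cotangent-complex formalism rigorously in place and compatible with the finiteness hypotheses --- is where the real work of \Cref{LiftingCorollary} lies.
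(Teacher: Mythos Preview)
Your overall architecture matches the paper's exactly: reduce to a single element via the tower \(\Lambda_0\to\cdots\to\Lambda_r\), prove a one-step lifting lemma along \(\Lambda_{i-1}\to\Lambda_i=\Lambda_{i-1}/^L\bar f_i\), and propagate the hypothesis \(\Ext^2=0\) downward by the cofiber sequence \(\tilde N\xrightarrow{g}\tilde N\to N\) and Nakayama (this is precisely the exact sequence \eqref{ExtLongExactSequenceIterate} in the paper). Your final discreteness check for \(L\) is also the paper's \Cref{DiscreteLiftingComp}/\Cref{DiscreteLiftingNil}; note that it already applies at each intermediate stage, so every \(N_i\) is discrete and you never actually need the one-step lemma for non-discrete almost perfect modules.

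Where you diverge is the proof of the one-step lemma itself. You propose to model \(B/^Lg\) as the Koszul DG \(B\)-algebra \(B\langle e\rangle\) and manipulate a semifree resolution \((C\otimes_B V,\,d_0+e\,d_1)\), killing \(d_1\) by an infinite sequence of \(g\)-adic corrections. This is the Nasseh--Sather-Wagstaff strategy (\Cref{RemarkDG}). The paper instead runs an ``adic'' lift entirely inside \(\Mod(-)\): set \(A_n\defeq B/^Lg^n\), lift \(M\) from \(A_1\) to \(A_2\) to \(A_3,\ldots\) using the obstruction \(\mathrm{ob}(M,L,n)\in\Ext^2_{A_1}(M,M)\) built from the fiber sequence \(\theta'_L\) of \Cref{LiftingExtElement} (\Cref{SplitLifting}), and then assemble the tower \(\{L_n\}\) into a lift to \(B\) via \Cref{LiftingSequence}. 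No strict DG model is ever invoked; everything happens via fiber sequences in the stable \(\infty\)-category \(\Mod(A_n)\) and the computation of \(\pi_*(M\otimes^L_{A_n}A_i)\) in \Cref{DerivedTensor}.

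The practical difference is exactly the one you flag as ``the main obstacle'': your sketch asks for a strict commutative DG model of the animated base \(B=\Lambda_{i-1}\) on which the exterior element \(e\) (with \(e^2=0\)) makes literal sense. The paper's own \Cref{CompatibleDG} and \Cref{DGAniContainsQ} point out that this identification of animated and DG is only known when the ground ring contains \(\mathbb{Q}\); away from characteristic zero the passage ``to a strict model of \(B\)'' and the subsequent semifree manipulation are not obviously legitimate. The paper's adic route sidesteps this completely and is characteristic-free. So your plan is correct in outline and would go through verbatim over \(\mathbb{Q}\), but in mixed or positive characteristic the DG step is a genuine gap that the paper fills by a different mechanism rather than by justifying the DG model.
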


There are two remarks of this theorem.
First, such generalization of \Cref{LiftingLemmaADS} has been studied by using the notion of DG algebras and DG modules (see \Cref{RemarkDG}).
Unlike that, we use the concepts of (stable) \(\infty\)-categories, animated rings, and module spectra introduced by Lurie \cite{lurie2009Higher,lurie2017Higher,lurie2018Spectral} and \v{C}esnavi\v{c}ius--Scholze \cite{cesnavicius2024Purity}.

Second, this theorem is not enough to show the Auslander--Reiten conjecture: in fact it tells us the following ``full circle'' result.

\begin{theorem}[\Cref{EquivLCI}] \label{MainTheorem3}
    Let \(A\) be a regular local ring and let \(f_1, \dots, f_r\) be a sequence of elements of \(A\). As above, set \(A/^L\underline{f} = A/^L (f_1, \dots, f_r)\) and \(R \defeq A/(f_1, \dots, f_r)A\).
    Then the existence of a finitely generated \(R\)-module \(M\) satisfying \(\Ext^2_{A/^L\underline{f}}(M, M) = 0\) is equivalent to that \(f_1,\dots, f_r\) is a regular sequence on \(A\).
\end{theorem}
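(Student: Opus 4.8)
The plan is to prove the two implications separately. The ``only if'' direction --- that regularity of $f_1,\dots,f_r$ produces a module $M$ with $\Ext^2_{A/^L\underline{f}}(M,M)=0$ --- is immediate: if $f_1,\dots,f_r$ is a regular sequence on $A$ then the Koszul complex $\Kos(f_1,\dots,f_r;A)$ resolves $R$, so by \Cref{DefDerivedQuotient} the animated ring $A/^L\underline{f}$ is discrete and equals $R$; taking $M=R$, which is free of rank one over itself, gives $\Ext^i_{A/^L\underline{f}}(R,R)=\Ext^i_R(R,R)=0$ for all $i\geq 1$, in particular for $i=2$, and $R\neq 0$ since a regular sequence generates a proper ideal.

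For the converse, let $M$ be a nonzero finitely generated $R$-module with $\Ext^2_{A/^L\underline{f}}(M,M)=0$; the nonvanishing is needed, since $M=0$ satisfies the hypothesis vacuously. By \Cref{MainTheorem2Derived} there is a finitely generated $A$-module $L$ with $M\simeq L\otimes^L_A A/^L\underline{f}$, and $L\neq 0$ because $M\neq 0$. Restricting scalars along $A\to A/^L\underline{f}$ and using that the underlying $A$-complex of $A/^L\underline{f}$ is $\Kos(f_1,\dots,f_r;A)$ (\Cref{DefDerivedQuotient}), the underlying $A$-complex of $M$ is identified with
\[
    L\otimes^L_A\Kos(f_1,\dots,f_r;A)\;\simeq\;\Kos(f_1,\dots,f_r;L).
\]
Since $M$ is a discrete module, this Koszul complex has homology concentrated in degree $0$, where it equals $L/(f_1,\dots,f_r)L\cong M\neq 0$; hence no $f_i$ is a unit, so $f_1,\dots,f_r\in\mathfrak{m}_A$, and by the standard Koszul characterization of regular sequences over a Noetherian local ring (equivalently, $f_1,\dots,f_r$ is a Koszul-regular sequence on $L$) it follows that $f_1,\dots,f_r$ is an $L$-regular sequence.

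To finish, observe that since $A$ is a regular local ring the finitely generated module $L$ has finite projective dimension. Now I would invoke Auslander's zero-divisor theorem in the form: \emph{an $L$-regular sequence on a nonzero finitely generated module $L$ of finite projective dimension over a Noetherian local ring is an $A$-regular sequence}. This reduces to the one-element case (the classical zero-divisor theorem, a consequence of the New Intersection Theorem) by induction on $r$: if $f_1$ is $A$-regular, then $L/f_1L$ is again a nonzero finitely generated module of finite projective dimension over the Noetherian local ring $A/f_1A$ on which $f_2,\dots,f_r$ is regular. This yields that $f_1,\dots,f_r$ is a regular sequence on $A$, completing the proof. The substantive inputs are Auslander's zero-divisor theorem and the lifting result \Cref{MainTheorem2Derived}; granting those, the only genuine subtlety is the base-change identification of the underlying $A$-complex of $L\otimes^L_A A/^L\underline{f}$ with $\Kos(f_1,\dots,f_r;L)$, together with the harmless but essential hypothesis $M\neq 0$, which also forces the $f_i$ into $\mathfrak{m}_A$.
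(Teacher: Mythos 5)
Your proof is correct and follows essentially the same route as the paper's proof of \Cref{EquivLCI}: invoke \Cref{LiftingCorollary} to produce the lift $L$, use regularity of $A$ to get finite projective dimension of $L$, observe that discreteness of $L\otimes^L_A A/^L\underline f \cong \Kos(f_1,\dots,f_r;L)$ forces $f_1,\dots,f_r$ to be $L$-regular, and close with Auslander's zero-divisor theorem (\Cref{AuslanderZeroDivisor}). The only differences are expository (you spell out the nonvanishing hypothesis, that the $f_i$ must lie in $\mathfrak{m}_A$, and the induction reducing the multi-element zero-divisor statement to the one-element case), which the paper leaves implicit.
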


This theorem is deduced from our theorem (\Cref{MainTheorem2Derived}) and Auslander's zero-divisor theorem (\cite{auslander1961Modules,serre1975Algebre,peskine1973Dimension,hochster1974Equicharacteristic,roberts1987Theoreme}, see \Cref{AuslanderZeroDivisor}).
As a consequence, if we consider the existence of liftings of modules under such a \(\Ext\)-vanishing condition, it is not enough to merely discuss as in \cite{auslander1993Liftings}. See also \Cref{AssumptionLemma2} and \Cref{ExtVanishingExample} where we prove a relationship between the \(\Ext\)-group over derived quotients and the one over usual quotients, and give an example that \(\Ext^i_{A/^L \underline{f}}(M, N)\) does not vanish but \(\Ext^i_{A/(\underline{f})A}(M, N)\) does.


\subsection{Relationship with DG methods}
We end the introduction by recalling the previous work on a generalization of lifting properties of modules by using DG methods and a relationship of their results with our results.

\begin{remark} \label{RemarkDG}
    Let \(\Lambda\) be a (commutative) Noetherian ring and let \(f_1, \dots, f_r\) be a sequence of elements of \(\Lambda\).
    Assume that \(\Lambda\) is \((f_1, \dots, f_r)\Lambda\)-adically complete.
    Let \(A\) be an (associative, unital, graded commutative, and positively graded) DG \(\Lambda\)-algebra such that each term of \(A\) is a finite free \(\Lambda\)-module.\footnote{Any cohomologically finitely generated connective \(\Lambda\)-module is quasi-isomorphic to a complex of finite free \(\Lambda\)-modules. In particular, the underlying complex of any almost perfect animated \(\Lambda\)-algebra is quasi-isomorphic to a complex of finite free \(\Lambda\)-modules by \Cref{EquivAlmostPerfect} and \Cref{NoetherianAlmostPerfect}.}
    In \cite[Corollary 3.5]{nasseh2013Liftings}, they proved that if a DG \(\Kos(\underline{f}; A)\)-module \(D\) that is homologically bounded below and homologically degreewise finite\footnote{In our sense, this means that \(D\) is a \(n\)-truncated almost perfect module for some \(n \geq 0\). See \Cref{EquivAlmostPerfect}.} satisfies \(\Ext^2_{\Kos(\underline{f}; A)}(D, D) = 0\), then there exists a DG \(A\)-module \(E\) such that \(D \cong E \otimes^L_A \Kos(\underline{f}; A)\). 
    Since then, there have been approaches to the lifting problem for polynomial extensions \(A \to A[X_1, \dots, X_n]\) of a DG \(\Lambda\)-algebra and free extensions \(A \to A\abracket{X_1, \dots, X_n}\) of a divided power DG \(\Lambda\)-algebra towards the na\"ive lifting conjecture which leads to the Auslander--Reiten conjecture (see \cite{nasseh2018Weak, ono2021Lifting, nasseh2022Naive} and a more comprehensive discussion in \cite[Appendix A]{nasseh2025Diagonal}).
\end{remark}

Next, we explain how relate our results to the similar results in \Cref{RemarkDG}.

\begin{remark} \label{CompatibleDG}
    By \cite[\S 2.6]{lurie2004Derived}, we have functors of \(\infty\)-categories
    \begin{equation*}
        \CAlg^{an}_{\Lambda} \xrightarrow{\phi} \CAlg^{dg}_{\Lambda} \xrightarrow{\psi} \CAlg^{\mathbb{E}_{\infty}}_{\Lambda}
    \end{equation*}
    where \(\CAlg^{an}_{\Lambda}\) is the \(\infty\)-category of animated \(\Lambda\)-algebras, \(\CAlg^{\mathbb{E}_{\infty}}_{\Lambda}\) is the \(\infty\)-category of \(\mathbb{E}_{\infty}\)-\(\Lambda\)-algebras, and \(\CAlg^{dg}_{\Lambda}\) be the underlying \(\infty\)-category of the ordinary category of commutative differential graded \(\Lambda\)-algebras with a Quillen model structure with quasi-isomorphisms as weak equivalences. Note that any commutative differential graded \(\Lambda\)-algebra is an object of \(\CAlg^{dg}_{\Lambda}\). The composition \(\theta \defeq \psi \circ \phi\) sends an animated ring to its underlying (connective) \(\mathbb{E}_{\infty}\)-ring \(A^{\circ}\) and this functor \(\map{\theta}{\CAlg^{an}_{\Lambda}}{\CAlg^{\mathbb{E}_{\infty}, cn}_{\Lambda}}\) is conservative and commutes all small limits and colimits (\cite[Proposition 2.6.1]{lurie2004Derived} and \cite[Proposition 25.1.2.2]{lurie2018Spectral}).

    If \(\Lambda\) contains \(\setQ\), then \(\psi\) is an equivalence of \(\infty\)-categories and \(\phi\) is fully faithful whose essential image is the connective \(\mathbb{E}_{\infty}\)-\(\Lambda\)-algebras by \cite[Proposition 7.1.4.11 and Proposition 7.1.4.20]{lurie2017Higher}.
    In particular, derived quotients in \(\CAlg^{an}_{\Lambda}\) and Koszul complexes in \(\CAlg^{dg}_{\Lambda}\) define the same objects in \(\CAlg^{\mathbb{E}_{\infty}}_{\Lambda}\) by the universal property (\cite[Lemma 2.3.5]{khan2019Virtual}).
    
    Furthermore, we can compare the \(\infty\)-categories of modules over those objects. First, for any animated ring \(A\), \(\Mod(A)\) and \(\Mod(A^{\circ})\) are the same by the definition (see \Cref{DefMod}). Second, \(\Mod(A^{\circ})\) is equivalent to the underlying \(\infty\)-category of the model category of DG modules over the differential graded \(\Lambda\)-algebra \(\phi(A)\). This is because the model category of modules over the Eilenberg--Mac Lane spectrum \(H\phi(A) \simeq A^{\circ}\) of \(\phi(A)\) is (Quillen-)equivalent to the model category of DG modules over the differential graded \(\Lambda\)-algebra \(\phi(A)\) (see \cite[Theorem 1.2 and Corollary 2.15]{shipley2007HZAlgebra}).
\end{remark}

\begin{remark} \label{DGAniContainsQ}
    Based on \Cref{RemarkDG} and \Cref{CompatibleDG} above, if \(\Lambda\) contains \(\setQ\), there is no difference (in the \(\infty\)-categorical setting) between the \(\infty\)-categories \(\CAlg^{an}_{\Lambda}\) and \(\CAlg^{dg, cn}_{\Lambda}\), and their module categories \(\Mod(A)\) and \(\Mod(\phi(A))\).
    Since our results (\Cref{MainTheorem2Derived} and \Cref{LiftingCorollary}) only treat discrete modules over \(A/^L \underline{f}\), they are a special case of the previous results \cite[Corollary 3.5]{nasseh2013Liftings} explained above if \(\Lambda\) contains \(\setQ\).

    However, we do not know whether our liftability results on the \(\infty\)-category of module spectra \(\Mod(A)\) are compatible with the results on the category of DG modules over the DG \(\Lambda\)-algebra \(\phi(A)\) in general. Otherwise or not, the methods are different: their results in \cite{nasseh2013Liftings} are based on DG algebras and DG modules, in particular, very calculous and concrete although our proof is conceptualized by using higher algebra.
    Also, since our work depends on the discreteness of a given \(M\) (for example in \Cref{DerivedTensor} and \Cref{LiftingExtElement}), it is not clear whether the result \Cref{LiftingCorollary} can be generalized to \(n\)-truncated almost perfect modules as in \cite{nasseh2013Liftings}.
\end{remark}

\subsection*{Acknowledgments}
The author would like to thank Kaito Kimura, Yuya Otake, Kazuma Shimomoto, and Ryo Takahashi for helpful discussions about previous works on the Auslander--Reiten conjecture and related topics, Yutaro Mikami and Masaya Sato for their valuable conversations on the notion of higher algebra, and Saeed Nasseh for his comments on the previous research by using DG methods.
This work was supported by JSPS KAKENHI Grant Number 24KJ1085.

\section{Animated Rings and Their Modules}

In this paper, we freely use the notion of higher algebra such as stable \(\infty\)-categories and animated rings, see for example \cite{lurie2017Higher,cesnavicius2024Purity,bhatt2022Absolute} (or a brief review in \cite[Appendix A]{ishizuka2024Prismatic}).
In this section, we fix some terminology and collect some lemmas about animated rings and their modules which are well-known for experts.

\begin{notation}
    Throughout this paper, we use the following notation.
    Let \(\Lambda\) be a complete Noetherian local ring with the maximal ideal \(\mfrakm\).
    Let \(\CAlg^{an}_\Lambda\) be the \(\infty\)-category of animated (commutative) \(\Lambda\)-algebras.
    Let \(\underline{x} \defeq x_1, \dots, x_t\) be a sequence of elements of \(\mfrakm\).
\end{notation}

\subsection{Basic Notions}

\begin{definition} \label{DefMod}
    Let \(A\) be an animated ring (or more generally, a (connective) \(\mathbb{E}_{\infty}\)-ring).
    The \emph{\(\infty\)-category of \(A\)-modules} is the \(\infty\)-category \(\Mod(A)\) of module spectra over the (underlying) \(\mathbb{E}_{\infty}\)-ring \(A^{\circ}\).\footnote{In this paper, we only mention modules over animated rings instead of ones over \(\mathbb{E}_{\infty}\)-rings. However, this makes no difference: by this definition of \(\Mod(A)\), the module category on an animated ring \(A\) and its underlying \(\mathbb{E}_{\infty}\)-ring \(A^{\circ}\) are the same. So every statements on modules over animated rings are also true for modules over an connective \(\mathbb{E}_{\infty}\)-ring. For a comparison of DG theory, please see \Cref{CompatibleDG}.}
    If \(A\) is discrete, \(\Mod(A)\) is equivalent to the (\(\infty\)-categorical enhancement of the) derived category \(\mcalD(A)\) of \(A\)-modules.
    For any \(M\) and \(N\) in \(\Mod(A)\), we denote by \(\Map_A(M, N)\) the anima of \(A\)-module maps and by \(\mapspt_A(M, N)\) the \(A\)-module spectra of \(A\)-module maps.
    This \(\infty\)-category \(\Mod(A)\) is stable and presentable (in particular, it has small colimits small limits by \cite[Corollary 7.1.1.5, Corollary 4.2.3.7, and Corollary 4.2.3.3]{lurie2017Higher}).
\end{definition}

\begin{definition}
    Let \(A\) be an animated ring and let \(M\) be an \(A\)-module.
    The \emph{\(i\)-th homotopy group} \(\pi_i(M)\) of a module \(M\) over an animated ring \(A\) is that of the underlying spectra of \(M\). Since we use the homological notation in this paper, \(\pi_i(-)\) corresponds to the \((-i)\)-th cohomology group \(H^{-i}(-)\): in fact, \(\pi_i(-)\) defines a (canonical) \(t\)-structure \((\Mod(A)_{\geq 0}, \Mod(A)_{\leq 0})\) on \(\Mod(A)\) (see \cite[Proposition 7.1.1.13]{lurie2017Higher}).
    An \(A\)-module \(M\) is \emph{connective} (or \emph{animated}) if \(\pi_i(M) = 0\) for any \(i < 0\).
    A \emph{discrete} \(A\)-module \(M\) is an object of \(\Mod(A)\) such that \(\pi_i(M) = 0\) for any \(i \neq 0\).
    Note that \emph{\(A\)-modules} in this paper is always an object of \(\Mod(A)\) not of the abelian category of discrete \(A\)-modules.
\end{definition}

We recall the \(\Ext\)-group in the \(\infty\)-category of \(A\)-modules.

\begin{definition}[{\cite[Notation 7.1.1.11 and Remark 7.1.1.12]{lurie2017Higher}}]
    Let \(A\) be an animated ring.
    For \(A\)-modules \(M\) and \(N\), we denote by \(\Ext^i_A(M, N)\) the discrete \(\pi_0(A)\)-module \(\pi_0(\Map_A(M, N[i]))\) or \(\pi_{-i}(\mapspt_A(M, N))\) for \(i \in \setZ\).
    If \(A\) is a discrete ring and \(M\) and \(N\) are complexes of \(A\)-modules, \(\mapspt_A(M, N)\) is isomorphic to the derived functor \(R\Hom_A(M, N)\) of the internal hom and then the \(A\)-module \(\Ext^i_A(M, N)\) is isomorphic to the usual Yoneda \(\Ext\)-group, that is, \(\Ext^i_A(M, N) \cong H^i(R\Hom_A(M, N))\).
\end{definition}

\begin{definition}[{\cite[Definition 7.2.4.30]{lurie2017Higher} and \cite[Definition 2.4]{kerz2018Algebraic}}] \label{DefNoetherianAnimated}
    Let \(A\) be an animated ring.
    We say that \(A\) is \emph{coherent} if \(\pi_0(A)\) is a coherent ring and each homotopy group \(\pi_i(A)\) is a finitely presented \(\pi_0(A)\)-module.
    We say that \(A\) is \emph{Noetherian} if \(\pi_0(A)\) is Noehterian and \(A\) is coherent.
\end{definition}

\begin{definition}[{\cite[Definition 7.2.4.1 and Proposition 7.2.4.2]{lurie2017Higher}}]
    Let \(A\) be an animated ring.
    The \(\infty\)-category \(\Mod(A)^{\perf}\) is the smallest stable subcategory of \(\Mod(A)\) that contains \(A\) and is closed under retracts.
    We say that an \(A\)-module \(M\) is \emph{perfect} if it belongs to \(\Mod(A)^{\perf}\).
    This is equivalent to that \(M\) is compact in \(\Mod(A)\).
\end{definition}

\begin{definition}[{\cite[Definition 7.2.4.10]{lurie2017Higher} and \cite[Definition 2.7.0.1]{lurie2018Spectral}}] \label{DefAlmostPerfect}
    Let \(A\) be an animated ring and let \(M\) be an \(A\)-module.
    \begin{enumalphp}
        \item \(M\) is \emph{perfect to order \(n\)}  if, for every filtered diagram \(\{N_\alpha\}\) in \(\Mod(A)_{\leq 0}\), the map of \(\pi_0(A)\)-modules
        \begin{equation*}
            \colim_\alpha \Ext_A^i(M, N_\alpha) \to \Ext_A^i(M, \colim_\alpha N_\alpha)
        \end{equation*}
        is bijective for all \(i < n\) and injective for \(i = n\).
        \item \(M\) is \emph{almost perfect} if it is perfect to order \(n\) for every integer \(n\). This notion is equivalent to pseudo-coherence of \(M\) in the sense of \citeSta{064Q} if \(A\) is discrete.
    \end{enumalphp}
    Especially, if \(M\) is connective, then \(M\) is perfect to order \(0\) if and only if \(\pi_0(M)\) is a finitely generated \(\pi_0(A)\)-module (\cite[Proposition 2.7.2.1(1)]{lurie2018Spectral}).
    By tensor-forgetful adjunction, the property of being perfect to order \(n\) is stable under the (derived) base change.
\end{definition}

\begin{lemma}[{\cite[Corollary 2.7.2.3]{lurie2018Spectral}}] \label{EquivAlmostPerfect}
    Let \(A\) be a Noetherian animated ring and let \(M\) be an \(A\)-module.
    Then \(M\) is perfect to order \(n\) if and only if the following conditions hold:
    \begin{enumalphp}
        \item \(M\) is bounded below, that is, for any sufficiently small \(m \ll 0\), \(\pi_m(M) = 0\).
        \item For any \(m \leq n\), \(\pi_m(M)\) is a finitely generated \(\pi_0(A)\)-module.
    \end{enumalphp}
\end{lemma}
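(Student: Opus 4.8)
This is \cite[Corollary 2.7.2.3]{lurie2018Spectral}. I would bootstrap it from the single non-formal input recorded at the end of \Cref{DefAlmostPerfect} — that a \emph{connective} $A$-module $M$ is perfect to order $0$ if and only if $\pi_0(M)$ is a finitely generated $\pi_0(A)$-module — together with the purely formal properties of ``perfect to order $n$'' that fall out of the defining colimit criterion (see \cite[\S 7.2.4]{lurie2017Higher} and \cite[\S 2.7.1]{lurie2018Spectral}): being perfect to order $n$ implies being perfect to order $n'$ for all $n' \leq n$; $M[1]$ is perfect to order $n+1$ if and only if $M$ is perfect to order $n$; for a fiber sequence $X \to Y \to Z$ of $A$-modules, $Z$ is perfect to order $n$ whenever $Y$ is perfect to order $n$ and $X$ is perfect to order $n-1$, and $X$ is perfect to order $n$ whenever $Y$ is perfect to order $n$ and $Z$ is perfect to order $n+1$ (both by a diagram chase in the long exact $\Ext$-sequence); and any finite free $A$-module, being perfect, is perfect to order $n$ for every $n$. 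I would also use that ``$A$ Noetherian'' means $\pi_0(A)$ is Noetherian with each $\pi_i(A)$ finitely generated over it, so finite generation over $\pi_0(A)$ is closed under submodules, quotients, and extensions.

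\emph{Step 1 (boundedness below).} Since $M$ is perfect to order $n$ exactly when $M[-n]$ is perfect to order $0$, and $M$ is bounded below exactly when $M[-n]$ is, the implication ``perfect to order $n$ $\Rightarrow$ bounded below'' reduces to the case $n = 0$. Given $M$ perfect to order $0$, I would feed the sequential (hence filtered) diagram $\{\tau_{\leq -k}M\}_{k \geq 0}$ of coconnective modules, with transition maps the successive truncations, into the colimit criterion. Its colimit vanishes, since $\pi_{-j}(M)$ is killed at the stage $k = j+1$, so $\colim_k \Ext^0_A(M, \tau_{\leq -k}M)$ injects into $\Ext^0_A(M, 0) = 0$. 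But the truncation maps $M \to \tau_{\leq -k}M$ are compatible and represent an element of this colimit, which can be zero only if $\tau_{\leq -k}M$ itself vanishes for some $k$: a truncation map is an isomorphism on $\pi_j$ for $j \leq -k$, hence is nullhomotopic only when its target is zero. Therefore $M$ is bounded below.

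\emph{Step 2 (d\'evissage).} By the shift property one may replace $M$ by $M[-c]$ when $M$ is $c$-connective, so the remaining content is the statement that a connective $M$ is perfect to order $n$ if and only if $\pi_m(M)$ is finitely generated over $\pi_0(A)$ for all $m \leq n$. I would induct on $n$, the cases $n < 0$ being vacuous (a connective module is perfect to order every negative integer) and $n = 0$ being the cited input. For the inductive step, once $\pi_0(M)$ is known finitely generated, choose a finite free $A^{\oplus k}$ and a map $\varphi \colon A^{\oplus k} \to M$ surjective on $\pi_0$, and let $M'$ be its fiber. The long exact homotopy sequence of $M' \to A^{\oplus k} \to M$, together with the finite generation of the $\pi_j(A^{\oplus k})$ and the Noetherian closure properties above, shows that $M'$ is connective and that $\pi_m(M')$ is finitely generated for all $m \leq n-1$ precisely when $\pi_m(M)$ is finitely generated for all $m \leq n$. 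Combining this with the inductive hypothesis for $M'$ and with the two fiber-sequence properties applied to $M' \to A^{\oplus k} \to M$ — which trade ``$M'$ perfect to order $n-1$'' against ``$M$ perfect to order $n$'' — yields both implications.

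The one step that genuinely needs care is Step 1: boundedness below is the only feature of ``perfect to order $n$'' not visibly governed by the order-$0$ statement, and it is easy to overlook that it has to be established separately. Everything else is bookkeeping — the only subtlety being to set up the long exact sequence in Step 2 symmetrically, so that the same relation powers both the ``only if'' part (via submodules of finitely generated modules being finitely generated) and the ``if'' part (via extensions of finitely generated modules being finitely generated).
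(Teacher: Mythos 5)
Your argument is correct and is essentially the proof in the cited reference \cite[Corollary 2.7.2.3]{lurie2018Spectral}: shift-invariance and the fiber-sequence stability of ``perfect to order $n$'' reduce one to the connective case, where the d\'evissage against a finite free surjection $A^{\oplus k}\to M$ combined with the Noetherian closure properties carries the induction from \cite[Proposition 2.7.2.1(1)]{lurie2018Spectral}; the boundedness-below step via the filtered system $\{\tau_{\leq -k}M\}$ with vanishing colimit is exactly the non-formal input that has to be supplied separately, and you handle it correctly. Since the paper itself simply cites Lurie rather than reproducing the argument, there is nothing further to compare.
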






\begin{definition}[{\cite[Definition 7.2.4.21]{lurie2017Higher}}]
    Let \(A\) be an animated ring and let \(I \subseteq \pi_0(A)\) be an ideal.
    For \(a \leq b \in \setZ \cup \{\pm \infty\}\), an \(A\)-module \(M\) has \emph{\(I\)-complete Tor-amplitude in \([a, b]\)} if \(M \otimes^L_A N\) is contained in \(\Mod(A)_{[a, b]}\) for any \(I^{\infty}\)-torsion (or equivalently, \(I\)-torsion) discrete \(A\)-module \(N\).
    We say that \(M\) has \emph{Tor-amplitude \(\leq n\)} if \(M\) is \(0\)-complete Tor-amplitude in \([- \infty, n]\), that is, for every discrete \(A\)-module \(N\), \(N \otimes^L_A M\) is \(n\)-truncated
\end{definition}

\subsection{Behavior under base change}

The properties in the previous subsection are stable under base change under some conditions (\Cref{LemBaseChangePerfectNil} and \Cref{LemBaseChangePerfect}). To show this, we need the following lemma.

\begin{lemma} \label{StableCompletenessTensor}
    Let \(A\) be a discrete ring with a weakly proregular ideal \(I \subseteq A\) and let \(M\) be a derived \(I\)-complete \(A\)-module which is contained in \(\Mod(A)_{[a, b]}\).
    For any almost perfect \(A\)-module \(N\), the \(A\)-module \(M \otimes^L_A N\) is derived \(I\)-complete.
\end{lemma}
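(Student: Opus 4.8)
The plan is to reduce to the known case where $N$ is a finite free $A$-module, and then bootstrap along the "almost perfect" filtration. First I would recall that derived $I$-completeness of a complex can be checked on homotopy groups: an $A$-module $K$ is derived $I$-complete if and only if each $\pi_n(K)$ is a derived $I$-complete discrete $A$-module (using that the derived $I$-complete objects form a full subcategory closed under limits, shifts, and extensions, and that $\Mod(A)_{[a,b]}$ is a finite extension of its homotopy groups placed in their degrees). Since $I$ is weakly proregular, the derived $I$-completion functor agrees with the one computed via the (telescope / Koszul) completion, and the collection $\widehat{\Mod}(A)$ of derived $I$-complete modules is a stable subcategory of $\Mod(A)$ closed under all limits, under shifts, and — being the kernel of the localization away from $I$-complete objects — closed under extensions and retracts.

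Next I would exploit that $N$ is almost perfect: over the (discrete, but not necessarily Noetherian) ring $A$, almost perfect means pseudo-coherent, so there is a complex $P_\bullet$ of finite free $A$-modules, bounded above in homological degree, with a quasi-isomorphism $P_\bullet \xrightarrow{\sim} N$; moreover $N$ is bounded below, so $P_\bullet$ may be taken with only finitely many nonzero terms up to a highly-connected truncation error. Concretely, for each $m$ the stupid truncation $\sigma_{\geq m} P_\bullet$ is a bounded complex of finite free modules, and the cofiber of $\sigma_{\geq m} P_\bullet \to N$ is $(m-1)$-connective. Tensoring the fixed complex $M$ (which lies in $\Mod(A)_{[a,b]}$) with these, $M \otimes^L_A \sigma_{\geq m} P_\bullet$ is a finite iterated extension of shifts of $M$ itself (since each term is a finite free module, $M \otimes^L_A A^{\oplus k} \simeq M^{\oplus k}$ is derived $I$-complete), hence derived $I$-complete; and $M \otimes^L_A (\text{cofiber})$ is $(a + m - 1)$-connective. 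Passing to the (co)limit, $M \otimes^L_A N \simeq \lim_m (\cdots)$, or more simply: for each fixed degree $n$, $\pi_n(M \otimes^L_A N) \cong \pi_n(M \otimes^L_A \sigma_{\geq m} P_\bullet)$ once $m \ll n$, which is a subquotient built from finitely many $\pi_j(M)$'s, each of which is derived $I$-complete because $M$ is and $\Mod(A)_{[a,b]}$-membership forces $\pi_j(M)$ to be a derived $I$-complete discrete module. Since derived $I$-complete discrete modules are closed under kernels, cokernels, and extensions (weak proregularity makes $R\Gamma_I$ and $L\Lambda_I$ well-behaved, so $\widehat{\Mod}(A)^\heartsuit$ is a Serre-type subcategory of $\Mod(A)^\heartsuit$), each $\pi_n(M \otimes^L_A N)$ is derived $I$-complete, and therefore so is $M \otimes^L_A N$.

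The main obstacle I anticipate is bookkeeping the non-Noetherian setting cleanly: $A$ is only assumed discrete with a weakly proregular ideal, so I cannot invoke $\Cref{EquivAlmostPerfect}$ (which needs $A$ Noetherian) to say $N$ is literally a bounded-below complex of finite frees — I must work with a pseudo-coherent resolution and control the connectivity of the truncation errors by hand, which is where the hypothesis $M \in \Mod(A)_{[a,b]}$ (in particular, bounded below) does real work in ensuring the degreewise stabilization kicks in. The secondary point requiring care is the stability of derived $I$-complete discrete modules under subquotients and extensions; this is exactly where weak proregularity of $I$ is used, via the identification of derived $I$-completion with the classical completion functor on a suitable class and the resulting exactness properties. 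Everything else is a formal manipulation with the stable subcategory $\widehat{\Mod}(A) \subseteq \Mod(A)$ of derived $I$-complete modules, which is closed under finite limits and colimits, shifts, and retracts, together with the trivial observation that finite free modules are sent to finite direct sums.
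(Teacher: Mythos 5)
Your proposal is correct and reaches the conclusion by a somewhat different route than the paper. The paper invokes Yekutieli's (or Porta--Shaul--Yekutieli's) representation theorem for weakly proregular ideals to produce a chain complex $M_\bullet$ of derived $I$-complete \emph{discrete} modules representing $M$, truncates to $\tau_{\leq b}\tau_{\geq a}M_\bullet$ (a bounded such complex, using that derived complete discrete modules form an abelian subcategory), tensors with a bounded-below complex of finite frees representing $N$, and observes that each term of the totalization is a \emph{finite} direct sum of derived complete modules, whence the total complex is derived complete. You instead bypass Yekutieli entirely by working at the level of homotopy groups: derived $I$-completeness of an object of $D(A)$ is detected on its homotopy groups for any finitely generated $I$ (via the Milnor sequence for the tower $\cdots \xrightarrow{f} K \xrightarrow{f} K$, $f \in I$), the derived $I$-complete discrete modules form a weak Serre subcategory of $\Mod(A)^{\heartsuit}$, and the boundedness of $M$ together with the bounded-below finite-free resolution of $N$ lets each $\pi_n(M \otimes^L_A N)$ be identified (for $m \gg 0$) with $\pi_n$ of $M$ tensored with a bounded complex of finite frees, which is a finite extension of shifts $M[j]^{\oplus k_j}$ and hence derived complete with derived complete homotopy groups. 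Your version has the mild aesthetic advantage of exposing that only finite generation of $I$ is used, so weak proregularity is not strictly needed for this lemma (it is harmless here, and presumably assumed because of the cited Yekutieli theorem and the downstream use in the paper). Two small notational points worth tightening if you were to write this up: with the homological convention in the paper, the bounded piece of the finite-free resolution is the stupid truncation keeping \emph{low} degrees (call it $\sigma_{< m}P_\bullet$, not $\sigma_{\geq m}P_\bullet$), sitting in a cofiber sequence $\sigma_{<m}P_\bullet \to P_\bullet \to \sigma_{\geq m}P_\bullet$ with the tail $m$-connective; and the sentence ``subquotient built from finitely many $\pi_j(M)$'s'' is cleanest phrased as a finite filtration with derived-complete graded pieces, to make the appeal to closure under extensions explicit.
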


\begin{proof}
    By \cite[Theorem 0.3 and Theorem 4.7]{yekutieli2025Derived} (or \cite{porta2014Homology,porta2015Cohomologically}), there exists a complex \(M_{\bullet}\) of derived \(I\)-complete (discrete) \(A\)-modules which represents \(M\).
    Since \(N\) is almost perfect, there exists a (homologically) bounded below complex \(N_{\bullet}\) of finite free \(A\)-modules which represents \(N\) (\citeSta{064Q}).
    Then the derived tensor product \(M \otimes^L_A N\) is represented by the totalization of the double complex \(M_{\bullet} \otimes_A N_{\bullet}\).
    If we set \(N_n = A^{\oplus k_n}\) for some integer \(k_n \geq 0\), the \(n\)-th term of the totalization is
    \begin{equation} \label{TotalizationCompletion}
        \Tot_n(M_{\bullet} \otimes_A N_{\bullet}) = \bigoplus_{n = i + j} (M_i \otimes_A N_j) = \bigoplus_{n = i + j} M_i^{\oplus k_j}.
    \end{equation}
    By \(M \in \Mod(A)_{[a, b]}\), \(M_{\bullet}\) is quasi-isomorphic to \(M_{\bullet}' \defeq \tau_{\leq b}\tau_{\geq a}M_{\bullet}\).
    Since the category of derived \(I\)-complete discrete \(A\)-modules consists of an abelian category, the complex \(M_{\bullet}'\) is a bounded complex of derived \(I\)-complete \(A\)-modules.
    This shows that the \(n\)-th term (\ref{TotalizationCompletion}) of \(M \otimes^L_A N\) is a finite direct sum of derived \(I\)-complete \(A\)-modules and thus \(M \otimes^L_A N\) is derived \(I\)-complete.
\end{proof}

As in the flatness, we can show the following.

\begin{lemma}[{cf. \cite[Lemma 5.15]{bhatt2021CohenMacaulayness}}] \label{NoetherianAdicallyTorAmpCompletion}
    Let \(A\) be an animated Noetherian ring with a finitely generated ideal \(I \subseteq \pi_0(A)\) and let \(M\) be a derived \(I\)-complete \(A\)-module.
    Suppose that \(\pi_0(A)\) is derived \(I\)-complete and \(M\) has \(I\)-complete Tor-amplitude in \([a, b]\).
    Then \(M\) has Tor-amplitude in \([a, b]\).
\end{lemma}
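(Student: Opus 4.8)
The plan is to combine a Noetherian dévissage with the ``derived Nakayama'' behaviour of module spectra. By definition we must show that \(M \otimes^L_A N\) lies in \(\Mod(A)_{[a,b]}\) for every discrete \(A\)-module \(N\), equivalently for every \(\pi_0(A)\)-module \(N\). Two elementary facts about derived \(I\)-completeness drive the argument: (1) an \(A\)-module \(Y\) is derived \(I\)-complete if and only if every homotopy group \(\pi_i(Y)\) is a derived \(I\)-complete \(\pi_0(A)\)-module (this comes from the description of derived completeness as the vanishing of \(R\lim\) along the multiplication towers of the generators of \(I\)); and (2) a derived \(I\)-complete module \(Y\) with \(Y/IY=0\) is zero (derived Nakayama). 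Since the functors \(N\mapsto\tau_{>b}(M\otimes^L_A N)\) and \(N\mapsto\tau_{<a}(M\otimes^L_A N)\) commute with filtered colimits, and \(\Mod(A)_{[a,b]}\) is closed under filtered colimits and under finite extensions, writing \(N\) as the filtered colimit of its finitely generated submodules and then using a prime filtration reduces us to the case \(N=\pi_0(A)/\mathfrak p\) for a prime \(\mathfrak p\subseteq\pi_0(A)\).

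Before the induction I would pin down the lower bound \(M\in\Mod(A)_{\geq a}\); this makes the lower half of the conclusion automatic (tensoring a connective module into \(\Mod(A)_{\geq a}\) stays in \(\Mod(A)_{\geq a}\)), and it is also needed afterwards. The Koszul homologies \(\pi_j(\Kos(\underline x;A))\) of a generating sequence \(\underline x\) of \(I\) are \(I\)-torsion discrete modules, so the hypothesis gives \(M\otimes^L_A\pi_j(\Kos(\underline x;A))\in\Mod(A)_{[a,b]}\); as \(\Kos(\underline x;A)\) is a finite iterated extension of shifts of these, \(M\otimes^L_A\Kos(\underline x;A)\in\Mod(A)_{\geq a}\). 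An induction on the number of generators of \(I\) then gives \(M\in\Mod(A)_{\geq a}\), the one-variable step being: if \(Y\) is derived \(f\)-complete and \(\mathrm{cofib}(f\colon Y\to Y)\in\Mod(A)_{\geq a}\), then \(Y\in\Mod(A)_{\geq a}\); indeed the long exact sequence shows multiplication by \(f\) is surjective on \(\pi_i(Y)\) for \(i<a\), and then facts (1) and (2) force \(\pi_i(Y)=0\) there.

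Now fix \(\mathfrak p\) and put \(P\defeq M\otimes^L_A\pi_0(A)/\mathfrak p\). Since \(A\) is Noetherian, \(\pi_0(A)/\mathfrak p\) is an almost perfect \(A\)-module by \Cref{EquivAlmostPerfect}; combined with \(M\in\Mod(A)_{\geq a}\) (in particular \(M\) is bounded below) and the analogue of \Cref{StableCompletenessTensor} for an animated base, this shows that \(P\) is derived \(I\)-complete, and clearly \(P\in\Mod(A)_{\geq a}\). I then run a Noetherian induction on \(\mathfrak p\): if the set of primes \(\mathfrak p\) with \(P\notin\Mod(A)_{\leq b}\) is nonempty, choose one, \(\mathfrak p_0\), maximal with respect to inclusion. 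If \(I\subseteq\mathfrak p_0\) then \(\pi_0(A)/\mathfrak p_0\) is \(I\)-torsion and the hypothesis contradicts the choice of \(\mathfrak p_0\); otherwise pick \(f\in I\setminus\mathfrak p_0\), a nonzerodivisor on \(\pi_0(A)/\mathfrak p_0\). In the short exact sequence \(0\to\pi_0(A)/\mathfrak p_0\xrightarrow{f}\pi_0(A)/\mathfrak p_0\to\pi_0(A)/(\mathfrak p_0+fA)\to 0\), the last term is supported on \(V(\mathfrak p_0+fA)\subsetneq V(\mathfrak p_0)\), hence has a finite filtration by \(\pi_0(A)/\mathfrak q\) with \(\mathfrak q\supsetneq\mathfrak p_0\); by maximality of \(\mathfrak p_0\) and closure of \(\Mod(A)_{[a,b]}\) under extensions, \(M\otimes^L_A\pi_0(A)/(\mathfrak p_0+fA)\in\Mod(A)_{[a,b]}\). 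Tensoring the short exact sequence with \(M\) yields a fibre sequence \(P\xrightarrow{f}P\to M\otimes^L_A\pi_0(A)/(\mathfrak p_0+fA)\), and the long exact sequence forces multiplication by \(f\) to be an isomorphism on \(\pi_i(P)\) for \(i>b\). But each \(\pi_i(P)\) is derived \(I\)-complete by fact (1), and a derived \(f\)-complete module on which \(f\) acts invertibly has \(\pi_i(P)/f\pi_i(P)=0\), hence vanishes by fact (2). Thus \(P\in\Mod(A)_{[a,b]}\), contradicting the choice of \(\mathfrak p_0\); so no such prime exists, and unwinding the reductions of the first paragraph finishes the proof.

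The main obstacle I expect is the input used in the third paragraph: that \(M\otimes^L_A\pi_0(A)/\mathfrak p\) is derived \(I\)-complete, i.e., the extension of \Cref{StableCompletenessTensor} to a (possibly non-discrete) animated ring \(A\). This is the only place where the boundedness \(M\in\Mod(A)_{\geq a}\) is genuinely used — an infinite direct sum of derived complete modules need not be complete. It should follow by approximating the almost perfect module \(\pi_0(A)/\mathfrak p\) by perfect \(A\)-modules \(N_k\), noting that each \(M\otimes^L_A N_k\) is a finite colimit of copies of \(M\) and hence derived complete, and passing to the limit over the Postnikov tower of \(M\otimes^L_A\pi_0(A)/\mathfrak p\) using fact (1) together with left-completeness of the \(t\)-structure on \(\Mod(A)\); alternatively one can invoke the relevant statement from \cite{lurie2018Spectral}. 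Everything else is formal manipulation of the \(t\)-structure together with standard commutative-algebra dévissage.
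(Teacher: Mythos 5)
Your proof is correct, but it takes a genuinely different route from the paper's. You run a Noetherian induction: after reducing to \(N=\pi_0(A)/\mathfrak p\) by filtered colimits and prime filtration, you establish the lower bound \(M\in\Mod(A)_{\geq a}\) via a Koszul dévissage plus derived Nakayama, and then you do an induction on primes, using that each \(\pi_i(P)\) is derived \(I\)-complete (fact (1)) and that invertibility of some \(f\in I\) forces vanishing (fact (2)). The paper instead reduces to \(A\) discrete, reduces \(N\) to a finitely generated (hence almost perfect) module, shows \(M\otimes^L_A N\) is derived \(I\)-complete via \Cref{StableCompletenessTensor}, writes \(M\otimes^L_A N \cong R\lim_n (M\otimes^L_A N/I^n N)\) using the Artin--Rees pro-isomorphism, and bounds the result via the Milnor exact sequence, killing the potential degree-\((a-1)\) contribution by a Mittag--Leffler argument. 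Both arguments need the same nontrivial input — derived \(I\)-completeness of \(M\otimes^L_A N\), which in the paper is handled by first passing to a discrete base and invoking \Cref{StableCompletenessTensor}, while you flag it as the main obstacle and sketch the animated-base analogue using left-completeness of the \(t\)-structure. Your dévissage approach is arguably the more conceptual one (and closer to how such ``completely flat \(\Rightarrow\) flat'' statements are usually proved); the paper's \(R\lim\)-and-Mittag--Leffler approach is more computational but has the advantage of only needing \Cref{StableCompletenessTensor} as already stated for discrete rings, avoiding the Postnikov-limit argument you outline. One small clean-up for your write-up: note that your Noetherian induction only needs to control the upper bound \(b\), since once \(M\in\Mod(A)_{\geq a}\) is in hand, the lower bound for \(M\otimes^L_A N\) is automatic for every connective \(N\); you say this, but it is worth making the separation explicit.
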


\begin{proof}
    We have to show that \(M \otimes^L_A N \cong (M \otimes^L_A \pi_0(A)) \otimes^L_{\pi_0(A)} N\) is contained in \(\Mod(A)_{[a, b]}\) for any discrete \(A\)-module \(N\).
    So we can assume that \(A\) is discrete.

    Approximating \(N\) by a discrete finitely generated \(\pi_0(A)\)-module, we may assume that \(N\) is a discrete almost perfect \(A\)-module by \Cref{EquivAlmostPerfect}.
    Since \(M\) is derived \(I\)-complete and \(I\)-complete Tor-amplitude in \([a, b]\), \(M = R\lim_n(M \otimes^L_A A/I^n)\) is a bounded \(A\)-module. Using \Cref{StableCompletenessTensor}, we have \(M \otimes^L_A N\) is derived \(I\)-complete.
    Since pro-systems \(\{N \otimes^L_A A/I^n\}_{n \geq 1}\) and \(\{N/I^nN\}_{n \geq 1}\) are pro-isomorphic by the Artin--Rees lemma for Noetherian rings, we have
    \begin{equation*}
        M \otimes^L_A N \cong (M \otimes^L_A N)^{\wedge} = R\lim_n (M \otimes^L_A N \otimes^L_A A/I^n) \cong R\lim_n (M \otimes^L_A N/I^nN).
    \end{equation*}
    By the Milnor exact sequence (see, for example, \cite[Lemma F.233]{gortz2023Algebraic}), we have the following short exact sequence of \(A\)-modules for each \(k \in \setZ\):
    \begin{equation*}
        0 \to R^1\lim_n \pi_{k+1}(M \otimes^L_A N/I^nN) \to \pi_k(M \otimes^L_A N) \to \lim_n \pi_k(M \otimes^L_A N/I^nN) \to 0.
    \end{equation*}
    Since \(M\) is \(I\)-complete Tor-amplitude in \([a, b]\), \(M \otimes^L_A N/I^nN\) is in \(\Mod(A)_{[a, b]}\) for all \(n \geq 1\).
    By this assumption, \(M \otimes^L_A N\) is in \(\Mod(A)_{[a-1, b]}\).
    The exact sequence \(0 \to I^nN/I^{n+1}N \to N/I^{n+1}N \to N/I^nN \to 0\) induces an exact sequence \(\pi_a(M \otimes^L_A N/I^{n+1}N) \to \pi_a(M \otimes^L_A N/I^nN) \to \pi_{a-1}(M \otimes^L_A I^nN/I^{n+1}N) = 0\).
    The Mittag--Leffler condition shows that \(R^1\lim_n \pi_a(M \otimes^L_A N/I^nN)\) vanishes and thus \(M \otimes^L_A N\) is in \(\Mod(A)_{[a, b]}\).
\end{proof}

The following lemma is one of the stability of some properties under base change.

\begin{lemma}[{\cite[Proposition 2.7.3.2]{lurie2018Spectral}}] \label{LemBaseChangePerfectNil}
    Let \(A \to B\) be a surjective\footnote{A map \(\map{f}{A}{B}\) of animated rings is \emph{surjective} if its connected component \(\map{\pi_0(f)}{\pi_0(A)}{\pi_0(B)}\) is a surjective map of usual rings.} map of animated rings with the nilpotent kernel \(I\) of \(\pi_0(A) \to \pi_0(B)\).
    Let \(M\) be a connective \(A\)-module and set a connective \(B\)-module \(M_B \defeq M \otimes^L_A B\).
    Then we have the following.
    \begin{enumalphp}
        \item \(M\) is perfect to order \(n\) over \(A\) if and only if \(M_B\) is perfect to order \(n\) over \(B\).
        \item \(M\) is almost perfect over \(A\) if and only if \(M_B\) is almost perfect over \(B\).
        \item \(M\) has Tor-amplitude \(\leq k\) over \(A\) if and only if \(M_B\) has Tor-amplitude \(\leq k\) over \(B\).
        \item \(M\) is perfect over \(A\) if and only if \(M_B\) is perfect over \(B\).
        \item For each \(n \in \setZ\), \(M\) is \(n\)-connective if and only if \(M_B\) is \(n\)-connective. 
    \end{enumalphp}
\end{lemma}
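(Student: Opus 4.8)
The plan is to deduce every implication from Nakayama's lemma for the nilpotent ideal \(\bar I \defeq \ker(\pi_0(A) \to \pi_0(B))\): since \(\bar I^{\,m} = 0\) for some \(m\), any \(\pi_0(A)\)-module \(P\) with \(\bar I P = P\) vanishes, with no finiteness hypothesis on \(P\). The ``only if'' directions are essentially formal. For (a) it is the stability of being perfect to order \(n\) under derived base change, recorded after \Cref{DefAlmostPerfect}, and (b) is (a) with \(n\) ranging over all integers. For (c) and (e) one uses that for a \(B\)-module \(N\) there is a canonical equivalence \(N \otimes^L_B M_B \simeq N \otimes^L_A M\) (restriction of scalars along \(A \to B\) does not change the underlying spectrum), together with the fact that \(- \otimes^L_A B\) sends connective modules to connective modules. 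So the substance lies in the converse implications, all of which I would handle by an induction or dévissage feeding into the Nakayama input.

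\emph{Connectivity and order zero.} Suppose \(M\) is connective and \(M_B\) is \(n\)-connective; let \(i \geq 0\) be minimal with \(\pi_i(M) \neq 0\) (if there is none, \(M \simeq 0\)). Then \(M\) is \(i\)-connective, so \(M_B\) is \(i\)-connective with bottom homotopy \(\pi_i(M_B) \cong \pi_i(M) \otimes_{\pi_0(A)} \pi_0(B) = \pi_i(M)/\bar I\,\pi_i(M)\); Nakayama makes this nonzero, so \(i \geq n\) and \(M\) is \(n\)-connective, giving (e). Taking \(i = 0\) gives \(\pi_0(M_B) \cong \pi_0(M)/\bar I\,\pi_0(M)\); if this is finitely generated over \(\pi_0(B) = \pi_0(A)/\bar I\), lifting a finite generating set and applying Nakayama shows \(\pi_0(M)\) is finitely generated over \(\pi_0(A)\), i.e.\ \(M\) is perfect to order \(0\) (see \Cref{DefAlmostPerfect}).

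\emph{Higher order and the remaining parts.} For (a) I would induct on \(n\): the case \(n \leq 0\) is the previous step (or vacuous), and for \(n \geq 1\), assuming \(M_B\) perfect to order \(n\), use the order-\(0\) case to pick \(\alpha \colon A^{\oplus k} \to M\) with \(\pi_0(\alpha)\) surjective and set \(M' \defeq \mathrm{fib}(\alpha)\). Then \(M'\) is connective (from the long exact sequence, using \(\pi_{-1}(A^{\oplus k}) = 0\) and surjectivity of \(\pi_0(\alpha)\)), and since \(- \otimes^L_A B\) is exact with \(A^{\oplus k} \otimes^L_A B \simeq B^{\oplus k}\) we get \(M'_B \simeq \mathrm{fib}(B^{\oplus k} \to M_B)\). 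By the standard dévissage for perfect-to-order-\(n\) modules in fiber sequences (see \cite[\S 7.2.4]{lurie2017Higher}), \(M_B\) perfect to order \(n\) forces \(M'_B\) perfect to order \(n - 1\); the inductive hypothesis gives \(M'\) perfect to order \(n - 1\) over \(A\), and the same dévissage over \(A\) (with \(A^{\oplus k}\) perfect) yields \(M\) perfect to order \(n\). Part (b) is the conjunction of (a) over all \(n\). For (c), given a discrete \(A\)-module \(N\), filter it by \(N \supseteq \bar I N \supseteq \dots \supseteq \bar I^{\,m} N = 0\); each subquotient \(\bar I^{\,j} N/\bar I^{\,j+1} N\) is a discrete \(\pi_0(B)\)-module, and by associativity of the tensor product \((\bar I^{\,j} N/\bar I^{\,j+1} N) \otimes^L_A M \simeq (\bar I^{\,j} N/\bar I^{\,j+1} N) \otimes^L_B M_B\) is \(k\)-truncated by hypothesis; since \(k\)-truncated modules are closed under fiber sequences, a downward induction along the filtration shows \(N \otimes^L_A M\) is \(k\)-truncated, so \(M\) has Tor-amplitude \(\leq k\). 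Finally, a connective module over a connective ring is perfect if and only if it is almost perfect and of finite Tor-amplitude (see \cite[Proposition 7.2.4.23]{lurie2017Higher}), so (d) follows by combining (b) and (c).

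\emph{Main obstacle.} The only genuinely non-formal point is the dévissage in (a): one must invoke the behaviour of ``perfect to order \(n\)'' under fiber sequences and check that the fiber of a \(\pi_0\)-surjection out of a finite free module is again connective, so that the inductive step stays inside the class of connective modules, where the order-\(0\) criterion ``\(\pi_0\) finitely generated'' is available. Everything else reduces to Nakayama for nilpotent ideals, exactness of derived base change, and the identity \(N \otimes^L_B M_B \simeq N \otimes^L_A M\).
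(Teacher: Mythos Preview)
Your proof is correct. The paper does not supply its own proof of this lemma; it simply cites \cite[Proposition 2.7.3.2]{lurie2018Spectral}, and your argument is essentially the one found there (and is also the template the paper reuses in the proof of the topological variant, \Cref{LemBaseChangePerfect}, immediately afterwards). In particular, the inductive d\'evissage for (a) via the fiber of a surjection from a finite free module, the Nakayama step for (e) and the order-\(0\) case, and the filtration by powers of \(\bar I\) for (c) are exactly the expected moves.
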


In \Cref{LemBaseChangePerfectNil} above, we need to assume that \(I\) is nilpotent.
However, by using the topological Nakayama's lemma (see, for example, \cite[Theorem 8.4]{matsumura1986Commutative}) instead of the usual Nakayama's lemma, some proof works under our assumption without the nilpotency of \(I\) and we can show the following ``topological'' variant.

\begin{lemma}[{Topological variant of \cite[Proposition 2.7.3.2]{lurie2018Spectral}}] \label{LemBaseChangePerfect}
    Let \(A \to B\) be a surjective map of animated rings with the kernel \(I\) of \(\pi_0(A) \to \pi_0(B)\).
    Let \(M\) be a connective \(A\)-module and set a connective \(B\)-module \(M_B \defeq M \otimes^L_A B\).
    Assume that \(\pi_0(A)\) is \(I\)-adically complete Noetherian and the \(\pi_0(A)\)-module \(\pi_0(M)\) is \(I\)-adically separated.
    Then we have the following.
    \begin{enumalphp}
        \item \(M\) is perfect to order \(n\) over \(A\) if and only if \(M_B\) is perfect to order \(n\) over \(B\).
        \item \(M\) is almost perfect over \(A\) if and only if \(M_B\) is almost perfect over \(B\).
        \item If \(M\) has Tor-amplitude \(\leq k\) over \(A\), then \(M_B\) has Tor-amplitude \(\leq k\) over \(B\).
        \item If \(M\) is perfect over \(A\), then \(M_B\) is perfect over \(B\). If \(A\) is Noetherian, the converse also holds.
        \item If \(\pi_i(M)\) is \(I\)-adically separated for all \(i \geq 0\), \(M\) is \(n\)-connective if and only if \(M_B\) is \(n\)-connective for each \(n \in \setZ\).
    \end{enumalphp}
\end{lemma}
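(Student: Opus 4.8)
The plan is to run the proof of \cite[Proposition 2.7.3.2]{lurie2018Spectral} essentially verbatim, replacing each appeal to the ordinary Nakayama lemma by the topological Nakayama lemma \cite[Theorem 8.4]{matsumura1986Commutative}. This substitution is legitimate because the hypothesis that \(\pi_0(A)\) is \(I\)-adically complete forces \(I\) to lie in the Jacobson radical of \(\pi_0(A)\), and the separatedness hypotheses supply the inputs the topological version needs. First I would dispose of the forward implications, each of which is simply the stability of the relevant property under the derived base change along \(A \to B\): perfectness to order \(n\) is stable under base change (\Cref{DefAlmostPerfect}), which gives the ``only if'' part of (a) and, over all \(n\), of (b); if \(N\) is discrete over \(B\) then it is discrete over \(A\) and \(M_B \otimes^L_B N \simeq M \otimes^L_A N\), which gives (c); a perfect module is almost perfect and of finite Tor-amplitude (\cite[Proposition 7.2.4.23]{lurie2017Higher}), so (b) and (c) yield the ``\(M\) perfect \(\Rightarrow M_B\) perfect'' half of (d); and base change is right \(t\)-exact, so \(M\) being \(n\)-connective implies \(M_B\) is, one direction of (e). The content lies in the converse directions.

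The key step is the base case of the induction proving (a): if \(\pi_0(M)/I\pi_0(M) = \pi_0(M_B)\) is finitely generated over \(\pi_0(A)/I\), then \(\pi_0(M)\) is finitely generated over \(\pi_0(A)\). Here I would pass to the \(I\)-adic completion \(\widehat{\pi_0(M)}\): since \(\pi_0(A)\) is Noetherian and \(I\) finitely generated, \(\widehat{\pi_0(M)}\) is \(I\)-adically complete with \(\widehat{\pi_0(M)}/I\widehat{\pi_0(M)} \cong \pi_0(M)/I\pi_0(M)\) still finitely generated over \(\pi_0(A)/I\), so topological Nakayama makes \(\widehat{\pi_0(M)}\) a finitely generated --- hence Noetherian --- \(\pi_0(A)\)-module; and the \(I\)-adic separatedness of \(\pi_0(M)\) gives an injection \(\pi_0(M) \hookrightarrow \widehat{\pi_0(M)}\), so \(\pi_0(M)\) is finitely generated. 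By \cite[Proposition 2.7.2.1(1)]{lurie2018Spectral} this says exactly that \(M\) is perfect to order \(0\). The inductive step for (a) then proceeds as in \cite{lurie2018Spectral}: given \(M_B\) perfect to order \(n+1\), the base case gives \(\pi_0(M)\) finitely generated, so I pick \(\phi\colon A^{\oplus k}\to M\) surjective on \(\pi_0\), set \(M' \defeq \mathrm{fib}(\phi)\) (connective), note \(\mathrm{fib}(\phi_B) \simeq M'\otimes^L_A B\) with \(\phi_B\) again surjective on \(\pi_0\), observe via the fiber-sequence characterization of perfectness to order \(n\) (from \cite[\S 2.7.1]{lurie2018Spectral}) that \(M'\otimes^L_A B\) is perfect to order \(n\) and that, \(\pi_0(M)\) being finitely generated, ``\(M\) perfect to order \(n+1\)'' is equivalent to ``\(M'\) perfect to order \(n\)'', and apply the inductive hypothesis to \(M'\).

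With (a) in hand, (b) follows over all \(n\) (both \(M\) and \(M_B\) are connective), and (c) is already done. For the converse in (d), with \(A\) Noetherian, (b) gives \(M\) almost perfect, and since every maximal ideal \(\mathfrak{m}\) of \(\pi_0(A)\) contains \(I\), the residue field \(\kappa(\mathfrak{m})\) is a \(\pi_0(B)\)-module and \(M\otimes^L_A \kappa(\mathfrak{m}) \simeq M_B\otimes^L_B\kappa(\mathfrak{m})\) is bounded; for an almost perfect module over a Noetherian ring this forces finite Tor-amplitude, hence \(M\) is perfect. For the remaining direction of (e): if \(M_B\) is \(n\)-connective and \(j\ge 0\) is minimal with \(\pi_j(M)\neq 0\), then right \(t\)-exactness gives \(\pi_j(M_B) \cong \pi_j(M)/I\pi_j(M)\), which vanishes since \(j<n\); thus \(\pi_j(M) = I\pi_j(M) = I^m\pi_j(M)\) for all \(m\), and the \(I\)-adic separatedness of \(\pi_j(M)\) forces \(\pi_j(M) = 0\), a contradiction.

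I expect the main obstacle to be the bookkeeping of the separatedness and completeness hypotheses through the d\'evissage. In \cite[Proposition 2.7.3.2]{lurie2018Spectral} the kernel \(I\) is nilpotent, so every \(\pi_0(A)\)-module is trivially \(I\)-adically separated and the induction runs without friction; here, in the inductive step of (a), \(\pi_0(M') = \pi_0(\mathrm{fib}(\phi))\) is only an extension of the submodule \(\ker(\pi_0(A)^{\oplus k}\to\pi_0(M))\) of \(\pi_0(A)^{\oplus k}\) --- which is finitely generated over the Noetherian complete ring \(\pi_0(A)\), hence \(I\)-adically separated by Krull's intersection theorem --- by a subquotient of \(\pi_1(M)\), and checking that \(\pi_0(M')\) (and the analogous modules higher in the tower) stays \(I\)-adically separated, so that the topological Nakayama step can be re-applied, is the delicate point. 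This is precisely what dictates the form of the separatedness hypotheses and forces one to invoke the Noetherianness and completeness of \(\pi_0(A)\) repeatedly rather than once.
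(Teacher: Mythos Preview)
Your proposal tracks the paper's proof almost line-for-line for (a), (b), (c), and (e): same induction, same substitution of topological Nakayama for ordinary Nakayama, same fiber-sequence d\'evissage. The separatedness concern you flag in the inductive step of (a) is exactly the point where the paper is also terse --- it simply asserts that ``since $\pi_0(A)$ is Noetherian, the finite $\pi_0(A)$-module $\pi_0(N)$ is $I$-adically separated'' and moves on, so you are not missing a hidden trick there.

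The one genuine divergence is the converse of (d). You detect finite Tor-amplitude on residue fields $\kappa(\mathfrak m)$ at maximal ideals (each contains $I$ since $\pi_0(A)$ is $I$-complete, so $\kappa(\mathfrak m)$ is a $B$-module and $M \otimes^L_A \kappa(\mathfrak m) \simeq M_B \otimes^L_B \kappa(\mathfrak m)$ is bounded), then invoke the standard fact that Tor-amplitude of an almost perfect module over a Noetherian ring can be read off uniformly at closed points. The paper instead shows $M$ has \emph{$I$-complete} Tor-amplitude $\leq k$ (every $I$-torsion discrete $A$-module is already a $B$-module), notes that $M$ is derived $I$-complete since it is almost perfect over a complete Noetherian base, and then applies \Cref{NoetherianAdicallyTorAmpCompletion} to upgrade this to honest Tor-amplitude $\leq k$. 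Your route is shorter and leans on a well-known external fact; the paper's is self-contained but needs the auxiliary \Cref{NoetherianAdicallyTorAmpCompletion}.
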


\begin{proof}
    The only if part of (a) is the tensor-forgetful adjunction and the definition of `perfect to order \(n\)' (\Cref{DefAlmostPerfect}).
    As in the proof of \cite[Proposition 2.7.3.2]{lurie2018Spectral}, we show the if part by induction on \(n\).
    The case of \(n < 0\) is clear.
    If \(n \geq 0\), we have an isomorphism \(\pi_0(M_B) \cong \pi_0(M)/I\pi_0(M)\) and \(\pi_0(M_B)\) is finite over \(\pi_0(B) \cong \pi_0(A)/I\) by \cite[Proposition 2.7.2.1 (1)]{lurie2018Spectral}.
    Since \(\pi_0(M)\) is \(I\)-adically separated and \(\pi_0(A)\) is \(I\)-adically complete, then \(\pi_0(M)\) is also a finitely generated \(\pi_0(A)\)-module by topological Nakayama's lemma.
    If \(n = 0\), this shows the claim; \(M\) is perfect to order \(0\) over \(A\) (\cite[Proposition 2.7.2.1 (1)]{lurie2018Spectral}).

    For general \(n > 0\), one can take a fiber sequence \(N \to A^k \to M\) of \(A\)-modules for some \(k \geq 1\) with a connective \(A\)-module \(N\) such that \(\pi_0(A)^k \to \pi_0(M)\) is surjective. Here we use the above argument to show that \(\pi_0(M)\) is finite over \(\pi_0(A)\).
    Taking \(- \otimes^L_A B\), we have a fiber sequence \(N \otimes^L_A B \to B^k \to M_B\) of \(B\)-modules.
    Since \(M_B\) is perfect to order \(n\), then \(N \otimes^L_A B\) is perfect to order \(n-1\) by \cite[Proposition 2.7.2.1 (2)]{lurie2018Spectral}. 
    Since \(\pi_0(A)\) is Noetherian, the finite \(\pi_0(A)\)-module \(\pi_0(N)\) is \(I\)-adically separated and we can apply the inductive hypothesis for \(N\), namely, \(N\) is perfect to order \(n-1\) over \(A\).
    By using \cite[Proposition 2.7.2.1 (2)]{lurie2018Spectral} again, \(M\) is perfect to order \(n\).

    The equivalence (b) is by (a) and the definition of almost perfectness.

    We next prove (c). By the isomorphism \(M_B \otimes^L_B N \cong M \otimes^L_A N\) for any (discrete) \(B\)-module \(N\).

    The conditions (b) and (c) give the first assertion of (d) by \cite[Proposition 7.2.4.23 (4)]{lurie2017Higher}.
    We show the converse if \(A\) is Noetherian. Again by using \cite[Proposition 7.2.4.23 (4)]{lurie2017Higher}, \(M_B\) is almost perfect and has Tor-amplitude \(\leq k\) over \(B\) for some \(k\).
    By (b), \(M\) is almost perfect over \(A\) and it suffices to show that \(M\) has Tor-amplitude \(\leq k\) over \(A\).
    Any \(I\)-torsion discrete \(A\)-module \(N\) can be regarded as a \(\pi_0(B) \cong \pi_0(A)/I\)-module and also as a \(B\)-module.
    By the assumption, \(M \otimes^L_A N \cong M_B \otimes^L_B N\) is \(k\)-truncated and thus \(M\) has \(I\)-complete Tor-amplitude \(\leq k\) over \(A\).
    Since \(M\) is almost perfect and \(\pi_0(A)\) is \(I\)-adically complete Noetherian, \(\pi_n(M)\) is derived \(I\)-complete for any \(n \in \setZ\) and so is \(M\) by \cite[Theorem 7.3.4.1]{lurie2018Spectral}.
    By \Cref{NoetherianAdicallyTorAmpCompletion}, \(M\) has Tor-amplitude \(\leq k\) over \(A\) and this shows the if part of (d).

    Finally, we prove (e).
    The only if part is the stability of (\(n\)-)connectivity under tensor products (\cite[Corollary 7.2.1.23]{lurie2017Higher}).
    The proof of the if part goes by induction on \(n\).
    The case of \(n \leq 0\) is clear since \(M\) is connective.
    If \(n > 0\) and \(M_B\) is \(n\)-connective, then \(M\) is \((n-1)\)-connective by the inductive hypothesis.
    This shows that \(0 \cong \pi_{n-1}(M_B) \cong \pi_{n-1}(M)/I\pi_{n-1}(M)\) and thus \(\pi_{n-1}(M) = 0\) by topological Nakayama's lemma since \(\pi_{n-1}(M)\) is \(I\)-adically separated.
\end{proof}

\begin{lemma} \label{NoetherianAlmostPerfect}
    Let \(A\) be an animated \(\Lambda\)-algebra.
    Then \(A\) is almost perfect as an \(\Lambda\)-module if and only if \(A\) is Noetherian and \(\pi_0(A)\) is finite over \(\Lambda\).
    In particular, if \(A\) is almost perfect over \(\Lambda\), then \(\pi_n(A)\) is complete with respect to the maximal ideal \(\mfrakm\) of \(\Lambda\) for all \(n \in \setZ\) and in particular \(A\) is derived \(\mfrakm\)-complete.
\end{lemma}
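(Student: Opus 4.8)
The plan is to deduce everything from the characterization of almost perfect modules over Noetherian animated rings in \Cref{EquivAlmostPerfect}, applied with the (discrete, Noetherian) base ring $\Lambda$, together with the elementary fact that finitely generated modules over the complete local ring $\Lambda$ are automatically $\mfrakm$-adically complete.

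For the \emph{if} direction I would argue as follows. Assume $A$ is Noetherian and $\pi_0(A)$ is module-finite over $\Lambda$. Since $A$ is connective it is bounded below as a $\Lambda$-module, and for every $n$ the group $\pi_n(A)$ is finitely generated over $\pi_0(A)$ by coherence (\Cref{DefNoetherianAnimated}), hence finitely generated over $\Lambda$ because $\pi_0(A)$ is finite over $\Lambda$. Conditions (a) and (b) of \Cref{EquivAlmostPerfect} are therefore satisfied for $A$ regarded as a $\Lambda$-module, for every $n$; so $A$ is perfect to order $n$ over $\Lambda$ for all $n$, i.e. almost perfect over $\Lambda$.

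For the \emph{only if} direction, suppose $A$ is almost perfect over $\Lambda$. Then \Cref{EquivAlmostPerfect} (for the Noetherian ring $\Lambda$) gives that each $\pi_n(A)$ is a finitely generated $\Lambda$-module; in particular $\pi_0(A)$ is module-finite over $\Lambda$. A module-finite algebra over a Noetherian ring is a Noetherian ring — its ideals are $\Lambda$-submodules of the Noetherian $\Lambda$-module $\pi_0(A)$ — so $\pi_0(A)$ is Noetherian, and each $\pi_n(A)$, being finitely generated over $\Lambda$ and a fortiori over $\pi_0(A)$, is finitely presented over the Noetherian ring $\pi_0(A)$. Hence $A$ is Noetherian, which together with module-finiteness of $\pi_0(A)$ over $\Lambda$ completes the equivalence.

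Finally, granting the equivalence, if $A$ is almost perfect over $\Lambda$ then each $\pi_n(A)$ is a finitely generated module over the $\mfrakm$-adically complete Noetherian ring $\Lambda$, hence $\mfrakm$-adically complete and separated, and in particular derived $\mfrakm$-complete. Since $A$ is bounded below with all homotopy groups derived $\mfrakm$-complete, $A$ itself is derived $\mfrakm$-complete by \cite[Theorem 7.3.4.1]{lurie2018Spectral}, exactly as in the proof of \Cref{LemBaseChangePerfect}. The only step that is not a bookkeeping application of \Cref{EquivAlmostPerfect} or standard commutative algebra is this last passage from levelwise completeness of the $\pi_n(A)$ to derived completeness of $A$; I expect it to be the one genuine (but minor) obstacle, and it is settled by the cited result.
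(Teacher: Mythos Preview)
Your proof is correct and follows essentially the same route as the paper: both directions are reduced to \Cref{EquivAlmostPerfect} over the discrete Noetherian base $\Lambda$, and the completeness claim is obtained from finiteness of each $\pi_n(A)$ over $\Lambda$ together with \cite[Theorem 7.3.4.1]{lurie2018Spectral}. Your write-up is slightly more explicit (e.g., spelling out why $\pi_0(A)$ is Noetherian and why connectivity gives boundedness below), but there is no substantive difference.
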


\begin{proof}
    If \(A\) is almost perfect as an \(\Lambda\)-module, then \(\pi_m(A)\) is finite over \(\Lambda = \pi_0(\Lambda)\) for all \(m \in \setZ\) by \Cref{EquivAlmostPerfect}.
    In particular, \(\pi_0(A)\) is Noetherian and \(\pi_m(A)\) is finite over the \(\Lambda\)-algebra \(\pi_0(A)\). This shows \(A\) is Noetherian.
    Conversely, if \(A\) is Noetherian and \(\pi_0(A)\) is finite over \(\Lambda\), then \(\pi_m(A)\) is finite over \(\Lambda\) for all \(m \in \setZ\). By using \Cref{EquivAlmostPerfect} again, \(A\) is almost perfect as an \(\Lambda\)-module.

    The last assertion follows from the fact that \(\pi_m(A)\) is finite over \(\Lambda\) for all \(m \in \setZ\) since any homologically \(\mfrakm\)-adically complete \(\Lambda\)-module is derived \(\mfrakm\)-complete (for example \cite[Theorem 7.3.4.1]{lurie2018Spectral}).
\end{proof}

In the last of the proof of our main theorem, we use the following lemmas (\Cref{EquivProjective} and \Cref{AlmostPerfMapFin}) which are basic results in the discrete case.

\begin{lemma} \label{EquivProjective}
    Let \(A\) be a coherent animated ring and let \(M\) be a connective perfect \(A\)-module.
    Then the following are equivalent.
    \begin{enumerate}
        \item \(M\) is projective over \(A\) in the sense of \cite[Proposition 7.2.2.6]{lurie2017Higher}.
        \item The \(\Ext\)-module \(\Ext^i_A(M, A)\) vanishes for \(i \geq 1\).
    \end{enumerate}
\end{lemma}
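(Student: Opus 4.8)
The plan is to treat the two implications separately. The direction $(1)\Rightarrow(2)$ is formal: a connective perfect projective $A$-module $M$ is a retract of a finitely generated free module $A^{\oplus n}$ by \cite[Proposition~7.2.2.7]{lurie2017Higher}, so applying the exact functor $\mapspt_A(-,A)$ exhibits $\mapspt_A(M,A)$ as a retract of $\mapspt_A(A^{\oplus n},A)\simeq A^{\oplus n}$. Hence $\Ext^i_A(M,A)=\pi_{-i}\mapspt_A(M,A)$ is a retract of $\pi_{-i}(A)^{\oplus n}$, which vanishes for every $i\geq 1$ because $A$ is connective.

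For the converse $(2)\Rightarrow(1)$ I would pass to the $A$-linear dual $M^{\vee}\defeq\mapspt_A(M,A)$. Since $M$ is perfect it is dualizable, so $M^{\vee}$ is again perfect, the biduality map $M\to(M^{\vee})^{\vee}$ is an equivalence, and $\mapspt_A(M,X)\simeq M^{\vee}\otimes^L_A X$ and $\mapspt_A(M^{\vee},X)\simeq M\otimes^L_A X$ for every $A$-module $X$. The strategy is to show that $M^{\vee}$ is a finitely generated projective module and then dualize back. First, $M^{\vee}$ is connective, since $\pi_{-i}(M^{\vee})=\Ext^i_A(M,A)=0$ for all $i\geq1$ by hypothesis. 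Second, $M^{\vee}$ is flat: for a discrete $A$-module $X$ one has $M^{\vee}\otimes^L_A X\simeq\mapspt_A(M,X)$, and $\pi_i\mapspt_A(M,X)=\Ext^{-i}_A(M,X)$ is the set of homotopy classes of $A$-module maps $M\to X[-i]$, which vanishes for $i\geq 1$ because $M\in\Mod(A)_{\geq 0}$ while $X[-i]\in\Mod(A)_{\leq -1}$; thus $M^{\vee}\otimes^L_A X$ lies in $\Mod(A)_{\leq 0}$, so $M^{\vee}$ has Tor-amplitude $\leq 0$, and being connective it is flat. Third, $M^{\vee}$ is perfect, hence almost perfect, so $\pi_0(M^{\vee})$ is a finitely presented $\pi_0(A)$-module (this uses perfectness to order $1$, see \Cref{DefAlmostPerfect}), and a finitely presented flat module is projective; therefore $\pi_0(M^{\vee})$ is projective over $\pi_0(A)$. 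Consequently $M^{\vee}$ is a flat $A$-module with projective $\pi_0$, hence projective in the sense of \cite[Proposition~7.2.2.6]{lurie2017Higher}, and being moreover perfect it is a retract of some $A^{\oplus n}$ by \cite[Proposition~7.2.2.7]{lurie2017Higher}. Dualizing once more, $M\simeq(M^{\vee})^{\vee}$ is a retract of $(A^{\oplus n})^{\vee}\simeq A^{\oplus n}$, hence projective, which is $(1)$.

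I would not spell out the routine steps: the long exact sequences attached to the fiber sequences used above, the $t$-structure vanishing ($\Map_A(P,Q)$ is contractible when $P\in\Mod(A)_{\geq 0}$ and $Q\in\Mod(A)_{\leq -1}$), and the implication from perfectness to order $1$ to finite presentation of $\pi_0$. The part that needs care is identifying the correct black boxes in \cite{lurie2017Higher} (and possibly \cite{lurie2018Spectral}): the dualizability and biduality of perfect modules, and the principle that a connective perfect flat module — equivalently, a connective perfect module of Tor-amplitude $\leq 0$ — is a retract of a finite free module and hence projective. As an alternative to the ``flat with projective $\pi_0$'' step, one can verify projectivity of $M^{\vee}$ directly: for every discrete $A$-module $N$ one has $\Ext^i_A(M^{\vee},N)\cong\pi_{-i}(M\otimes^L_A N)=0$ for $i\geq1$ since $M$ and $N$ are connective, which is one of the equivalent forms of projectivity for a connective module. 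I note that this argument seems to use only the connectivity of $A$ and the perfectness of $M$, not the coherence hypothesis.
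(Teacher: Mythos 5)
Your proof is correct but takes a genuinely different route from the paper's. The paper proves $(2)\Rightarrow(1)$ by reducing, via the compactness of $M$, to a discrete finitely generated $\pi_0(A)$-module $Q$; this is where coherence of $A$ is used (to make such $Q$ almost perfect, via \cite[Proposition 7.2.4.17]{lurie2017Higher}), after which $Q$ is resolved by an increasing sequence $\{Q(j)\}$ with finite free shifted cofibers as in the proof of \cite[Proposition 7.2.4.11]{lurie2017Higher}, and the hypothesis $\Ext^k_A(M,A)=0$ for $k\geq 1$ plus compactness of $M$ collapse $\Ext^i_A(M,Q)$ to $\Ext^i_A(M,Q(0))=0$. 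You instead pass to the dual $M^\vee=\mapspt_A(M,A)$, observe that the hypothesis says exactly that $M^\vee$ is connective, verify that $M^\vee$ is projective (either through the ``flat with projective $\pi_0$'' criterion, or more directly via $\Ext^i_A(M^\vee,N)\cong\pi_{-i}(M\otimes^L_A N)=0$ for $i\geq 1$ and discrete $N$, using biduality of perfect modules), conclude that $M^\vee$ is a retract of a finite free module, and dualize back. Your closing remark is correct: the duality argument only uses that $A$ is connective and $M$ perfect, so the coherence hypothesis can be dropped, giving a mild strengthening of the lemma. What each approach buys: the paper's proof is closer in spirit to the syzygy/resolution arguments that recur elsewhere in the paper, whereas yours is shorter, eliminates a hypothesis, and makes transparent that the $\Ext$-vanishing is precisely the connectivity of the dual $M^\vee$.
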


\begin{proof}
    The projectivity of \(M\) over \(A\) is equivalent to the vanishing of \(\Ext^i_A(M, Q) = 0\) for any discrete (or, equivalently, connective) \(A\)-module \(Q\) and for any \(i \geq 1\) by \cite[Proposition 7.2.2.6]{lurie2017Higher}. So (1) implies (2).
    We will show (2) \(\Rightarrow\) (1).
    We can assume that \(Q\) is a discrete finitely generated \(A\)-module, in particular, \(Q\) is connective and almost perfect over \(A\) by \cite[Proposition 7.1.1.13 (3) and Proposition 7.2.4.17]{lurie2017Higher}.
    As in the proof of \cite[Proposition 7.2.4.11 (5)]{lurie2017Higher}, \(Q\) can be represented as the colimit of a sequence of \(A\)-modules
    \begin{equation*}
        0 = Q(-1) \xrightarrow{f_0} Q(0) \xrightarrow{f_1} Q(1) \xrightarrow{f_2} Q(2) \to \cdots
    \end{equation*}
    such that \(Q(j)\) is connective, \(\cofib(f_j)[-j]\) is a free \(A\)-module \(A^{\oplus n_j}\) of finite rank \(n_j\) for \(j \geq 0\).\footnote{More generally, by the same proof of \cite[Proposition 7.2.4.11]{lurie2017Higher}, we can show that any connective \(A\)-module \(Q\) is the colimit of a sequence \(0 = Q(-1) \xrightarrow{f_0} Q(1) \xrightarrow{f_1} Q(2) \xrightarrow{f_2} \cdots\) of connective \(A\)-modules such that \(\cofib(f_j)[-j]\) is a free \(A\)-module for any \(j \geq 0\). This representation has the same spirit as the semifree resolution of DG \(A\)-modules (see for example \cite[\S 2.12]{nasseh2022Naive}).}
    Applying \(\mapspt_A(M, (-))\) for the (co)fiber sequence \(Q(j-1) \xrightarrow{f_j} Q(j) \to \cofib(f_j)\) for each \(j \geq 0\), we have an exact sequence
    \begin{equation*}
        \Ext^i_A(M, Q(j-1)) \to \Ext^i_A(M, Q(j)) \to \Ext^i_A(M, \cofib(f_j)) \cong (\Ext^{i+j}_A(M, A))^{\oplus n_j}
    \end{equation*}
    for each \(i \geq 1\) by virtue of the equivalence \(\cofib(f_j) \cong A^{\oplus n_j}[j]\).
    Since \(\Ext^k_A(M, A)\) vanishes for any \(k \geq 1\) by the assumption, \(\Ext^i_A(M, Q(j-1)) \xrightarrow{\cong} \Ext^i_A(M, Q(j))\) is an isomorphism for each \(i, j \geq 1\).
    Since \(M\) is perfect over \(A\), the following isomorphisms hold for each \(i \geq 1\):
    \begin{align*}
        \Ext^i_A(M, Q) & \cong \Ext^i_A(M, \colim_j Q(j)) \cong \colim_j \Ext^i_A(M, Q(j)) \cong \Ext^i_A(M, Q(0)).
    \end{align*}
    By the construction of the sequence \(\{Q(j)\}_{j \geq -1}\), \(Q(0) \cong \cofib(f_0)[-0]\) is a finite free \(A\)-module.
    Our assumption, the vanishing of \(\Ext^i_A(M, A)\) for \(i \geq 1\), shows that \(\Ext^i_A(M, Q)\) vanishes for any \(i \geq 1\) and thus \(M\) is projective over \(A\).
\end{proof}

\begin{lemma} \label{AlmostPerfMapFin}
    Let \(A\) be an animated Noetherian ring and let \(L\) and \(L'\) be almost perfect \(A\)-modules.
    If \(L'\) is \(k\)-truncated, then the \(\Ext\)-module \(\Ext^i_A(L, L')\) is finitely generated over \(\pi_0(A)\) for any \(i \in \setZ\).
    In particular, if \(L\) is an almost perfect discrete \(A\)-module, then \(\Ext^i_A(L, L)\) is finitely generated over \(\pi_0(A)\) for any \(i \in \setZ\).
\end{lemma}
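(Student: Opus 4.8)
The plan is to reduce to the structure theorem for almost perfect modules over a Noetherian animated ring and then run an induction on the "free resolution tower" of $L$. First I would recall, as in the proof of \Cref{EquivProjective} (which cites \cite[Proposition 7.2.4.11]{lurie2017Higher}), that since $L$ is almost perfect over the Noetherian animated ring $A$, it can be written as the colimit of a sequence
\begin{equation*}
    0 = L(-1) \xrightarrow{g_0} L(0) \xrightarrow{g_1} L(1) \xrightarrow{g_2} \cdots
\end{equation*}
of connective $A$-modules with $\cofib(g_j) \simeq A^{\oplus n_j}[j]$ for finite ranks $n_j$, and with $L \simeq \colim_j L(j)$. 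Dually, since the statement is about $\Ext^i_A(L, L')$ with $L'$ fixed and $k$-truncated, I would instead use the "co-tower": $L$ admits, for each $m$, a map from a perfect module $P_m \to L$ whose fiber is $m$-connective (this is the almost perfect approximation, \cite[Proposition 7.2.4.11(5)]{lurie2017Higher} or \Cref{EquivAlmostPerfect}).

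The key steps, in order, are: (1) Reduce to showing each $\Ext^i_A(L, L')$ is finitely generated for a \emph{fixed} $i$; since $L'$ is bounded below (being almost perfect and $k$-truncated, it lies in $\Mod(A)_{[a,k]}$ for some $a$ by \Cref{EquivAlmostPerfect}), and $L$ is bounded below, the relevant $\Ext$-groups are computed by finitely many "layers". (2) Pick $m$ large enough (depending on $i$, $a$, $k$) so that for the perfect approximation $P_m \to L$ with $m$-connective fiber $F_m$, the map $\Ext^i_A(L, L') \to \Ext^i_A(P_m, L')$ is an isomorphism — this holds because $\mapspt_A(F_m, L')$ has homotopy concentrated in a range avoiding degree $-i$ once $m > k - a$ or so, using that $F_m$ is $m$-connective and $L'$ is $k$-truncated. (3) For the perfect module $P_m$, finiteness of $\Ext^i_A(P_m, L')$ is elementary: $P_m$ is a finite iterated extension of shifts of $A$, so $\mapspt_A(P_m, L')$ is a finite iterated extension of shifts of $L'$, hence each homotopy group is a subquotient-extension built from the $\pi_j(L')$, which are finitely generated over the Noetherian ring $\pi_0(A)$. (4) Conclude $\Ext^i_A(L, L') \cong \Ext^i_A(P_m, L')$ is finitely generated over $\pi_0(A)$. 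For the final sentence, specialize $L' = L$: a discrete almost perfect module is $0$-truncated, so the hypothesis is met with $k = 0$.

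The main obstacle is step (2): making precise the vanishing/isomorphism range, i.e. that tensoring against (equivalently, mapping out of) the highly connective fiber $F_m$ does not affect $\Ext^i$. This requires the bound that $\mapspt_A(F_m, L')$ is coconnective enough: if $F_m \in \Mod(A)_{\geq m}$ and $L' \in \Mod(A)_{\leq k}$, then $\mapspt_A(F_m, L') \in \Mod(A)_{\leq k - m}$, so $\pi_{-i}\,\mapspt_A(F_m, L') = 0$ as soon as $-i > k - m$, i.e. $m > k + i$; one needs this for $\Ext^i$ and $\Ext^{i+1}$ (to kill both the kernel and cokernel in the long exact sequence of the fiber sequence $F_m \to P_m \to L$), so take $m > k + i + 1$. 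I would double-check that the co-tower really does provide a perfect $P_m$ (not merely perfect-to-order-$m$); in the Noetherian case this is fine because perfect-to-order-$m$ plus the automatic bounded-below-ness lets one truncate to a genuinely perfect module, or one simply invokes \cite[Proposition 7.2.4.11(5)]{lurie2017Higher} directly. Everything else is a routine dévissage once this range bookkeeping is set up.
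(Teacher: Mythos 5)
Your approach is essentially the paper's: approximate \(L\) by a perfect module \(P\) together with a fiber sequence \(F \to P \to L\) in which \(F\) is highly connective, use the connectivity of \(F\) against the \(k\)-truncatedness of \(L'\) to identify \(\Ext^i_A(L,L')\) with \(\Ext^i_A(P,L')\) in a range, prove finiteness for perfect \(P\), and then let the connectivity of the approximation grow. The paper cites \cite[Corollary 2.7.2.2]{lurie2018Spectral} for the fiber sequence with perfect \(P\) and \(n\)-connective \(F\); your reference to \cite[Proposition 7.2.4.11(5)]{lurie2017Higher} gives the exhaustive tower rather than this truncated approximation, but the two are easily interconvertible, so this is only a citation quibble.

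The one genuine divergence is how you handle the perfect piece. The paper invokes duality: \(\mapspt_A(P, L') \simeq P^\vee \otimes^L_A L'\) with \(P^\vee\) perfect (\cite[Proposition 7.2.4.4]{lurie2017Higher}), and then uses that tensor products of almost perfect modules over a Noetherian animated ring are almost perfect, so the homotopy groups are finite by \Cref{EquivAlmostPerfect}. You instead devisse \(P\) into shifts of \(A\). That works, but note that a perfect module is a \emph{retract} of a finite iterated extension of shifts of \(A\), not literally such an extension; you must add the (one-line) observation that a retract of a module with finitely generated \(\Ext\)-groups again has finitely generated \(\Ext\)-groups over a Noetherian base. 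There is also a small bookkeeping slip in your range estimate: what must vanish are \(\Ext^{i-1}_A(F,L')\) and \(\Ext^{i}_A(F,L')\) (not \(\Ext^i\) and \(\Ext^{i+1}\)), which gives the bound \(i \leq m-k-1\) matching the paper's \(i \leq n-k-1\); this only shifts your inequality by one and does not affect validity.
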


\begin{proof}
    Fix an integer \(n\). Our assumption implies that \(L\) is perfect to order \(n\).
    By \cite[Corollary 2.7.2.2]{lurie2018Spectral}, there exists a fiber sequence \(F \to P \to L\) such that \(P\) is a perfect \(A\)-module and \(F\) is \(n\)-connective (that is, \(\pi_i(F) = 0\) for \(i < n\)).
    Taking \(\mapspt_A(-, L')\) and its long exact sequence, we have an exact sequence of \(\pi_0(A)\)-modules
    \begin{align*}
        \Ext^{i-1}_A(F, L') \to \Ext^i_A(L, L') \to \Ext^i_A(P, L') \to \Ext^i_A(F, L').
    \end{align*}
    Considering the \(t\)-structure on \(\Mod(A)\) (or simply on \(\mathrm{h}\Mod(A)\)) and the \(n\)-connectivity of \(F\), the first and the last terms vanish for \(i \leq n-k-1\) since \(L'\) is \(k\)-truncated.
    We have to show that \(\Ext^i_A(P, L')\) is finitely generated over \(\pi_0(A)\).
    Since \(P\) is perfect over \(A\), the \(A\)-module \(\mapspt_A(P, L')\) is equivalent to \(P^{\vee} \otimes^L_A L'\) for some perfect \(A\)-module \(P^{\vee}\) by \cite[Proposition 7.2.4.4]{lurie2017Higher}.
    Since \(L'\) and \(P^{\vee}\) are almost perfect over \(A\), so is the \(A\)-module \(P^{\vee} \otimes^L_A L'\).
    In particular, the homology group \(\pi_{-i}(P^{\vee} \otimes^L_A L') \cong \pi_{-i}(\mapspt_A(P, L')) = \Ext^i_A(P, L')\) is finite over \(\pi_0(A)\) for any \(i \in \setZ\) by \Cref{EquivAlmostPerfect} and so is \(\Ext^i_A(L, L')\) for \(i \leq n-k-1\) by the above exact sequence.
    This \(n\) can be taken arbitrarily large and we have the desired result.
    The second statement follows from the first statement by taking \(L' = L\) and \(k = 0\).
\end{proof}

\section{Derived Quotients}

In this section, we recall the notion of derived quotients and calculate some homotopy groups of (derived) tensor products (\Cref{TorsionCalculation} and \Cref{DerivedTensor}).
Recall that \((\Lambda, \mfrakm)\) is a complete Noetherian local ring.

\begin{definition} \label{DefDerivedQuotient}
    Let \(A\) be an animated ring and let \(\underline{f} = f_1, \dots, f_r\) be a sequence of elements of \(\pi_0(A)\).
    Take an \(A\)-module \(M\). 
    We denote by \(M/^L\underline{f} = M/^L(f_1, \dots, f_r)\) the \emph{derived quotient} of \(M\) by \(\underline{f}\), that is,
    \begin{equation*}
        M/^L\underline{f} \defeq M \otimes^L_{\setZ[X_1, \dots, X_t]} \setZ \cong M \otimes^L_A (A/^L\underline{f}) \in \Mod(A/^L\underline{f}),
    \end{equation*}
    where \(M \leftarrow \setZ[X_1, \dots, X_t] \to \setZ\) is the map \(\times x_i \mapsfrom X_i \mapsto 0\).
    Note that \(A/^L \underline{f}\) is an animated \(A\)-algebra.
    If \(A\) is discrete, the underlying complex of \(M/^L \underline{x}\) is the Koszul complex \(\Kos(M; \underline{x}) \defeq M \otimes^L_A \Kos(A; \underline{x})\) in \(D(A)\).

    By considering the exact sequence \(0 \to \setZ[X] \xrightarrow{\times X} \setZ[X] \to \setZ \to 0\), we have a fiber sequence in \(\Mod(A)\)
    \begin{equation}
        M \xrightarrow{\times x} M \to M/^L x. \label{FiberSequenceModx}
    \end{equation}
    Moreover, if \(B\) is an animated \(A\)-algebra, then the derived quotient \(B/^L(f_1, \dots, f_r)\) has a natural structure of animated \(B\)-algebras and a morphism \(B \to B/^L(f_1, \dots, f_r)\) of animated rings since it is defined by a homotopy cocartesian square in \(\CAlg^{an}_{\setZ}\) (see, for example, \cite[2.3.1]{khan2019Virtual}).
\end{definition}

\begin{notation}
    Let \(A\) be an animated \(\Lambda\)-algebra and let \(x\) be an element of \(\mfrakm\).
    For \(n \geq 1\), we set the derived quotient
    \begin{equation*}
        A_n \defeq A/^L x^n = A \otimes^L_{\setZ[X]} \setZ \in \CAlg^{an}_\Lambda.
    \end{equation*}
    Then we have maps of animated \(\Lambda\)-algebras
    \begin{equation*}
        A \to A_1 \leftarrow A_2 \leftarrow \dots \leftarrow A_n \leftarrow \cdots.
    \end{equation*}
    If \(A\) is Noetherian, so is \(A_n\) by \Cref{DefNoetherianAnimated}. Note that this canonical map \(A_{n+1} \to A_n\) depends on \(n\) and \(x\). Even if \(x = 0\) and \(A\) is discrete, this map is not an isomorphism in general (consider the map on \(\pi_1\)).
\end{notation}


\begin{lemma} \label{LemDerivedQuotient}
    Let \(A\) be an animated \(\Lambda\)-algebra and let \(x\) be an element of \(\mfrakm\).
    Let \(M\) be an \(A\)-module.
    Then we have an isomorphism of \(A\)-modules:
    \begin{equation*}
        M/^L x^n \cong M \otimes^L_{\setZ[X]} \setZ[X]/(X^n),
    \end{equation*}
    where \(M \leftarrow \setZ[X] \to \setZ[X]/(X^n)\) is the map \(\times x \mapsfrom X \mapsto \overline{X}\).
    In particular, the following isomorphisms hold:
    \begin{align*}
        A_m/^L x^n & \cong A_m \otimes^L_A A_n \in \CAlg^{an}_\Lambda \\
        M/^L x^n & \cong M \otimes^L_A A_n \in \Mod(A_n)
    \end{align*}
    for any \(m, n \geq 0\). In particular, \(A_i\) is isomorphic to \(A \otimes^L_{\setZ[X]} \setZ[X]/(X^i)\) as an \(A_n\)-module via the canonical map \(A_n \cong A \otimes^L_{\setZ[X]} \setZ[X]/(X^n) \to A \otimes^L_{\setZ[X]} \setZ[X]/(X^i) \cong A_i\) for \(1 \leq i \leq n-1\).
\end{lemma}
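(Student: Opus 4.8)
The plan is to reduce all of the assertions to a single computation over polynomial rings — namely that $\setZ[X]$ is a finite free (hence flat) $\setZ[Y]$-module when $Y$ is sent to $X^n$ — together with the transitivity of derived base change. Throughout I introduce an auxiliary variable $Y$ and start from the defining presentation $M/^L x^n = M \otimes^L_{\setZ[Y]} \setZ$ of \Cref{DefDerivedQuotient}, in which the map $\setZ[Y] \to M$ sends $Y$ to $x^n$ (that is, it is the composite of the animated ring map $\setZ[Y] \to A$, $Y \mapsto x^n$, with the $A$-action on $M$) and $\setZ[Y] \to \setZ$ sends $Y$ to $0$.

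For the first isomorphism, I would factor the animated ring map $\setZ[Y] \to A$, $Y \mapsto x^n$, as $\setZ[Y] \xrightarrow{Y \mapsto X^n} \setZ[X] \xrightarrow{X \mapsto x} A$. Transitivity of derived base change (associativity of the relative tensor product) then yields
\begin{equation*}
    M/^L x^n = M \otimes^L_{\setZ[Y]} \setZ \cong M \otimes^L_{\setZ[X]} \bigl( \setZ[X] \otimes^L_{\setZ[Y]} \setZ \bigr),
\end{equation*}
so it remains to identify $\setZ[X] \otimes^L_{\setZ[Y]} \setZ$ with $\setZ[X]/(X^n)$ as a $\setZ[X]$-algebra. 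But $\setZ[X]$ is free over $\setZ[Y]$ on the basis $1, X, \dots, X^{n-1}$, so this derived tensor product is discrete and hence equals the ordinary tensor product $\setZ[X] \otimes_{\setZ[Y]} \setZ[Y]/(Y) = \setZ[X]/(X^n\setZ[X]) = \setZ[X]/(X^n)$; being discrete, its $\setZ[X]$-algebra structure is determined on $\pi_0$ and is the quotient map. Plugging this back in gives exactly the stated isomorphism with the stated structure maps.

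The three remaining assertions are formal consequences of the same transitivity. Factoring the structure map $\setZ[Y] \to M$ instead as $\setZ[Y] \xrightarrow{Y \mapsto x^n} A$ followed by the $A$-action on $M$ gives $M/^L x^n \cong M \otimes^L_A \bigl( A \otimes^L_{\setZ[Y]} \setZ \bigr) = M \otimes^L_A A_n$ in $\Mod(A_n)$, which is essentially a restatement of \Cref{DefDerivedQuotient}. Taking $M = A_m$ and running the same argument in $\CAlg^{an}_{\setZ}$ — using that the derived quotient of the animated $A$-algebra $A_m$ is a pushout and that pushout squares paste, cf.\ \cite[2.3.1]{khan2019Virtual} — gives $A_m/^L x^n \cong A_m \otimes^L_A ( A/^L x^n ) = A_m \otimes^L_A A_n$ in $\CAlg^{an}_\Lambda$. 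Finally, for $1 \le i \le n$ the inclusion $(X^n) \subseteq (X^i)$ gives a surjection of rings $\setZ[X]/(X^n) \to \setZ[X]/(X^i)$; applying $A \otimes^L_{\setZ[X]} (-)$ (with $\setZ[X] \to A$ sending $X$ to $x$) and the first isomorphism identifies the resulting map with the canonical map $A_n \to A_i$ of the statement, and functoriality of $A \otimes^L_{\setZ[X]} (-)$ shows that this identification is $A_n$-linear, which is the last claim. (The cases $m = 0$ or $n = 0$ are trivial under the convention $A_0 = A$, $M/^L x^0 = M$.)

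I do not anticipate a genuine obstacle: the only delicate point is the bookkeeping of the various algebra- and module-structure maps — in particular, verifying that the $\setZ[X]$-algebra structure on $\setZ[X] \otimes^L_{\setZ[Y]} \setZ$ really is the quotient map and that each transitivity equivalence respects the structures demanded of it — after which the argument is entirely formal.
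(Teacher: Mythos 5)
Your proof is correct, and it takes a genuinely different route from the paper's. The paper proves the first isomorphism by applying $-\otimes^L_{\setZ[X]} M$ (with $\setZ[X] \to A$, $X\mapsto x$) to the fiber sequence $\setZ[X]\xrightarrow{\times X^n}\setZ[X]\to\setZ[X]/(X^n)$, producing $M\xrightarrow{\times x^n}M\to M\otimes^L_{\setZ[X]}\setZ[X]/(X^n)$, and then identifying this cofiber with $M/^L x^n$ via the defining fiber sequence (\ref{FiberSequenceModx}). You instead stay entirely inside the definitional presentation $M/^L x^n = M\otimes^L_{\setZ[Y]}\setZ$ ($Y\mapsto x^n$), factor the structure map $\setZ[Y]\to A$ as $\setZ[Y]\xrightarrow{Y\mapsto X^n}\setZ[X]\xrightarrow{X\mapsto x}A$, apply transitivity of the derived tensor product, and use that $\setZ[X]$ is finite free of rank $n$ over $\setZ[Y]$ (basis $1,X,\dots,X^{n-1}$) to compute $\setZ[X]\otimes^L_{\setZ[Y]}\setZ\cong\setZ[X]/(X^n)$. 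The paper's route is slightly shorter and leans on the cofiber description already set up in \Cref{DefDerivedQuotient}; yours makes the isomorphism a pure consequence of a finite-free base change plus associativity of $\otimes^L$, which has the small advantage of keeping the algebra structure visibly in play from the start (useful for the $\CAlg^{an}$ variant). For the remaining claims you use the same transitivity and pushout-pasting arguments as the paper, so the proofs diverge only in the first step. One minor point to be aware of: the paper's notation only defines $A_n$ for $n\ge 1$, so the "$m,n\ge 0$" in the statement implicitly adopts the conventions $A_0 = A$ and $M/^L x^0 = M$ that you spelled out — your parenthetical is the right reading.
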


\begin{proof}
    Taking \(- \otimes^L_{\setZ[X]} M\) for the fiber sequence \(\setZ[X] \xrightarrow{\times X^n} \setZ[X] \to \setZ[X]/(X^n)\), we have a fiber sequence of \(A\)-modules
    \begin{equation*}
        M \xrightarrow{\times x^n} M \to M \otimes^L_{\setZ[X]} \setZ[X]/(X^n).
    \end{equation*}
    This shows \(M/^L x^n \cong M \otimes^L_{\setZ[X]} \setZ[X]/(X^n)\).
    By this isomorphism, \(A_n \cong A \otimes^L_{\setZ[X]} \setZ[X]/(X^n)\) holds. So we have the following isomorphism of \(A_n\)-modules:
    \begin{align*}
        M \otimes^L_A A_n & \cong M \otimes^L_A (A \otimes^L_{\setZ[X]} \setZ[X]/(X^n)) \cong M \otimes^L_{\setZ[X]} \setZ[X]/(X^n) \cong M/^L x^n.
    \end{align*}
    If \(M\) is an animated \(\Lambda\)-algebra \(A_m\), the isomorphism \(A_m/^L x^n \cong A_m \otimes^L_A A_n\) in \(\Mod(A_n)\) becomes an isomorphism between animated \(\Lambda\)-algebras.
\end{proof}

\begin{lemma} \label{DistinguishedTriangleAn}
    Let \(A\) be an animated \(\Lambda\)-algebra and let \(x\) be an element of \(\mfrakm\).
    Then there exists a fiber sequence in \(\Mod(A_{n+1})\)
    \begin{equation*}
        A_k \xrightarrow{\times x^{n+1-k}} A_{n+1} \to A_{n+1-k}
    \end{equation*}
    for any \(n \geq 1\) and \(1 \leq k \leq n\).
\end{lemma}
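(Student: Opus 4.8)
The plan is to obtain the desired fiber sequence by base changing an elementary short exact sequence of modules over the truncated polynomial ring \(\setZ[X]/(X^{n+1})\) along the canonical map \(\setZ[X]/(X^{n+1}) \to A_{n+1}\).

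First I would record, for \(1 \leq k \leq n\), the short exact sequence of \(\setZ[X]/(X^{n+1})\)-modules
\[
    0 \longrightarrow \setZ[X]/(X^{k}) \xrightarrow{\,\times X^{n+1-k}\,} \setZ[X]/(X^{n+1}) \longrightarrow \setZ[X]/(X^{n+1-k}) \longrightarrow 0 ,
\]
where the hypothesis \(1 \leq k \leq n\) ensures that all three quotients are honest \(\setZ[X]/(X^{n+1})\)-modules. Injectivity of the left-hand map and the identification of its cokernel are immediate, since \(X^{n+1-k}f \in (X^{n+1})\) forces \(f \in (X^{k})\). Because \(\Mod(\setZ[X]/(X^{n+1}))\) is stable, this short exact sequence of discrete modules is in particular a fiber sequence there.

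Next I would apply the exact base-change functor \(A_{n+1} \otimes^L_{\setZ[X]/(X^{n+1})} (-)\) along the ring map \(\setZ[X]/(X^{n+1}) \to A_{n+1}\) induced by \(X \mapsto x\) and the presentation \(A_{n+1} \cong A \otimes^L_{\setZ[X]} \setZ[X]/(X^{n+1})\) from \Cref{LemDerivedQuotient}. This functor preserves fiber sequences, and for each integer \(m\) with \(1 \leq m \leq n+1\), associativity of the derived tensor product together with \Cref{LemDerivedQuotient} gives
\[
    A_{n+1} \otimes^L_{\setZ[X]/(X^{n+1})} \setZ[X]/(X^{m}) \cong A \otimes^L_{\setZ[X]} \setZ[X]/(X^{m}) \cong A_{m}
\]
as \(A_{n+1}\)-modules, the \(A_{n+1}\)-module structure on \(A_{m}\) being the canonical one of \Cref{LemDerivedQuotient}; and the base change of \(\times X^{n+1-k}\) is multiplication by the image of \(x^{n+1-k}\). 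Taking \(m = k,\, n+1,\, n+1-k\) yields precisely the fiber sequence \(A_{k} \xrightarrow{\times x^{n+1-k}} A_{n+1} \to A_{n+1-k}\) in \(\Mod(A_{n+1})\).

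There is no substantial obstacle; the only mildly delicate points are checking that the three quotients in the starting sequence really do live over \(\setZ[X]/(X^{n+1})\) (this is exactly where \(1 \leq k \leq n\) is used) and that the two base-change identifications are compatible with the canonical \(A_{n+1}\)-module structures appearing in the statement. One can alternatively derive the sequence in \(\Mod(A)\) by the octahedral axiom applied to \(A \xrightarrow{\times x^{k}} A \xrightarrow{\times x^{n+1-k}} A\), or by base changing the analogous sequence of \(\setZ[X]\)-modules along \(\setZ[X] \to A\), and then upgrade the module structure; routing through \(\setZ[X]/(X^{n+1})\) simply makes the target \(\infty\)-category \(\Mod(A_{n+1})\) manifest from the outset.
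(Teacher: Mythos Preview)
Your proof is correct and essentially the same as the paper's: both base change the elementary short exact sequence \(0 \to \setZ[X]/(X^k) \xrightarrow{\times X^{n+1-k}} \setZ[X]/(X^{n+1}) \to \setZ[X]/(X^{n+1-k}) \to 0\) to \(A\), invoking the identification \(A_m \cong A \otimes^L_{\setZ[X]} \setZ[X]/(X^m)\) from \Cref{LemDerivedQuotient}. The paper tensors over \(\setZ[X]\) and then appeals to the last sentence of \Cref{LemDerivedQuotient} for the \(A_{n+1}\)-module structure, whereas you route through \(\setZ[X]/(X^{n+1})\) so that the sequence lands directly in \(\Mod(A_{n+1})\); this is the alternative you yourself mention at the end, and the difference is purely cosmetic.
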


\begin{proof}
    By the isomorphism \(A_n \cong A \otimes^L_{\setZ[X]} \setZ[X]/(X^n)\) in \Cref{LemDerivedQuotient}, we have a fiber sequence in \(\Mod(A_{n+1})\)
    \begin{equation*}
        A \otimes^L_{\setZ[X]} \setZ[X]/(X^k) \xrightarrow{\times X^{n+1-k}} A \otimes^L_{\setZ[X]} \setZ[X]/(X^{n+1}) \to A \otimes^L_{\setZ[X]} \setZ[X]/(X^{n+1-k}).
    \end{equation*}
    This gives the desired fiber sequence.
\end{proof}

\begin{lemma} \label{TorsionCalculation}
    Let \(A\) be an animated \(\Lambda\)-algebra and let \(x\) be an element of \(\mfrakm\).
    For any discrete \(A\)-module \(M\), we have
\begin{equation*}
    \Tor_k^A(A_n, M) \defeq \pi_k(A_n \otimes^L_A M) \cong 
    \begin{cases}
        M/x^nM & \text{if } k = 0 \\
        M[x^n] & \text{if } k = 1 \\
        0 & \text{otherwise},
    \end{cases}
\end{equation*}
where \(M[x^j] \defeq \set{m \in M}{x^jm = 0}\) is the \(x^j\)-torsion submodule of \(M\) for \(j \geq 0\).
\end{lemma}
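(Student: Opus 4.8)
The plan is to reduce the computation to the Koszul complex $\Kos(A; x^n)$ and then compute directly. By \Cref{LemDerivedQuotient}, $A_n \cong A \otimes^L_{\setZ[X]} \setZ[X]/(X^n)$ and hence $A_n \otimes^L_A M \cong M \otimes^L_{\setZ[X]} \setZ[X]/(X^n)$ for any $A$-module $M$. Using the resolution $\setZ[X] \xrightarrow{\times X^n} \setZ[X] \to \setZ[X]/(X^n)$ of $\setZ[X]/(X^n)$ over $\setZ[X]$, this derived tensor product is computed by the two-term complex $[M \xrightarrow{\times x^n} M]$ placed in homological degrees $1$ and $0$; equivalently, by the fiber sequence $M \xrightarrow{\times x^n} M \to A_n \otimes^L_A M$ already recorded in (the proof of) \Cref{LemDerivedQuotient}.

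Next I would extract the long exact sequence on homotopy groups from that fiber sequence. Since $M$ is discrete, $\pi_k(M) = 0$ for $k \neq 0$, so the long exact sequence immediately forces $\pi_k(A_n \otimes^L_A M) = 0$ for all $k \geq 2$ and for all $k < 0$, and leaves the four-term exact sequence
\begin{equation*}
    0 \to \pi_1(A_n \otimes^L_A M) \to M \xrightarrow{\times x^n} M \to \pi_0(A_n \otimes^L_A M) \to 0.
\end{equation*}
Reading this off gives $\pi_0(A_n \otimes^L_A M) \cong \coker(\times x^n \colon M \to M) = M/x^nM$ and $\pi_1(A_n \otimes^L_A M) \cong \ker(\times x^n \colon M \to M) = M[x^n]$, which is exactly the claimed answer.

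There is essentially no obstacle here: the only points requiring care are that the derived quotient $A/^L x^n$ agrees with the two-term Koszul-type complex (this is precisely \Cref{LemDerivedQuotient}, applied with the resolution of $\setZ[X]/(X^n)$ rather than of $\setZ$), and the sign/indexing convention that $\pi_i$ corresponds to $H^{-i}$ so that the torsion submodule lands in $\pi_1$ and the quotient in $\pi_0$. Since $M$ is assumed discrete, no boundedness or finiteness hypotheses are needed, and the argument is entirely formal once the fiber sequence is in hand.
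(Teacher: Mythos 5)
Your proposal is correct and follows essentially the same route as the paper: both derive the fiber sequence $M \xrightarrow{\times x^n} M \to A_n \otimes^L_A M$ (the paper directly from $A \xrightarrow{\times x^n} A \to A_n$, you via base change from $\setZ[X]$ as in \Cref{LemDerivedQuotient}) and then read off the homotopy groups from the long exact sequence, using discreteness of $M$. The minor detour through $\setZ[X]$ is harmless and lands in the same place.
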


\begin{proof}
    We have a fiber sequence \(A \xrightarrow{\times x^n} A \to A_n\) of \(A\)-modules.
    Then \(A_n \otimes^L_A M\) is the cofiber of \(M \xrightarrow{\times x^n} M\) in \(\Mod(A)\).
    Therefore, we have the desired calculation.
\end{proof}

The following calculation is a generalization of the same calculation for a \emph{regular element} \(x\) proved in \cite[Lemma 1.1]{auslander1993Liftings} and a key to the proof of \Cref{LiftingEquivFiberSeq}.
The proof is given first, and the necessary preparations are described afterwards (\Cref{ColimitPreserving}--\Cref{GeometricRealizationisComplex}).

\begin{lemma} \label{DerivedTensor}
    Let \(A\) be an animated \(\Lambda\)-algebra and let \(x\) be an element of \(\mfrakm\).
    Let \(M\) be a discrete \(A_n\)-module.
    For each \(1 \leq i \leq n-1\), we have the following isomorphisms of discrete \(\pi_0(A_n)\)-modules:
    \begin{equation*}
        \pi_k(M \otimes^L_{A_n} A_i) \cong 
        \begin{cases}
            M/x^iM & \text{if } k = 0 \\
            M[x^i]/x^{n-i}M & \text{if \(k\) is odd} \\
            M[x^{n-i}]/x^iM & \text{if \(k \neq 0\) is even}.
        \end{cases}
    \end{equation*}
\end{lemma}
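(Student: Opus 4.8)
The plan is to reduce the computation of $M \otimes^L_{A_n} A_i$ to an ordinary Tor computation over the discrete ring $S \defeq \setZ[X]/(X^n)$ and then to run it through an explicit $2$-periodic free resolution. First I would note that $x^n$ maps to $0$ in $\pi_0(A_n) = \pi_0(A)/x^n\pi_0(A)$, so the universal property of the pushout $A_n = A \otimes^L_{\setZ[X]} \setZ[X]/(X^n)$ supplies a canonical map of animated rings $S \to A_n$ sending $X$ to the image of $x$. By \Cref{LemDerivedQuotient}, the $A_n$-module $A_i$ is $A \otimes^L_{\setZ[X]} \setZ[X]/(X^i)$ with $A_n$-action through the canonical ring map $A_n \to A_i$ induced by $\setZ[X]/(X^n) \to \setZ[X]/(X^i)$; since $S/X^iS \cong \setZ[X]/(X^i)$ as $\setZ[X]$-modules (because $i \le n$), associativity of $\otimes^L$ gives an isomorphism of $A_n$-modules $A_i \cong (A \otimes^L_{\setZ[X]} S) \otimes^L_S (S/X^iS) \cong A_n \otimes^L_S (S/X^iS)$. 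Applying $M \otimes^L_{A_n}(-)$ and the base-change identity $M \otimes^L_{A_n}(A_n \otimes^L_S (S/X^iS)) \cong M \otimes^L_S (S/X^iS)$ then yields $M \otimes^L_{A_n} A_i \cong M \otimes^L_S (S/X^iS)$, where $M$ is regarded as a \emph{discrete} $S$-module via $S \to A_n$; in particular $X$ acts on $M$ by multiplication by $x$, and $x^n M = 0$.

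Next I would build the resolution. Writing $S$ as the free abelian group on $1, X, \dots, X^{n-1}$, a one-line check gives $(0 :_S X^j) = X^{n-j}S$ for $0 \le j \le n$; hence for $1 \le i \le n-1$ the complex $\cdots \xrightarrow{\times X^i} S \xrightarrow{\times X^{n-i}} S \xrightarrow{\times X^i} S \to S/X^iS \to 0$, with differentials alternating between $\times X^i$ in odd positions and $\times X^{n-i}$ in even positions, is exact, i.e.\ a free resolution of $S/X^iS$ over $S$. Tensoring it with the discrete $S$-module $M$ and using the reduction above, $M \otimes^L_{A_n} A_i$ is computed by the complex $\cdots \xrightarrow{\times x^i} M \xrightarrow{\times x^{n-i}} M \xrightarrow{\times x^i} M$ with $M$ in every homological degree $\ge 0$. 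Since $x^n M = 0$ we have $x^{n-i}M \subseteq M[x^i]$ and $x^i M \subseteq M[x^{n-i}]$, so the relevant subquotients make sense, and reading off homology in degree $0$, in odd degrees, and in nonzero even degrees gives exactly $M/x^iM$, $M[x^i]/x^{n-i}M$, and $M[x^{n-i}]/x^iM$ respectively. Note that discreteness of $M$ is exactly what makes the tensored complex have $M$ (and nothing derived) in each spot, so this is where the hypothesis is used.

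The step I expect to need the most care is the reduction in paragraph one: one must check that the chain of isomorphisms $A_i \cong A_n \otimes^L_S (S/X^iS)$ really is compatible with the $A_n$-module structure on $A_i$ coming from the canonical ring map $A_n \to A_i$ of \Cref{LemDerivedQuotient} (this is where the precise description of that map, induced by $\setZ[X]/(X^n)\to\setZ[X]/(X^i)$, is needed), and that the base-change formula $M \otimes^L_{A_n}(A_n \otimes^L_S N) \cong M \otimes^L_S N$ is the evident one; once these identifications are in place, the remaining two steps are elementary commutative algebra over $\setZ[X]/(X^n)$. Alternatively, one can avoid $S$ and argue entirely inside $\Mod(A_n)$: \Cref{DistinguishedTriangleAn} provides cofiber sequences $A_i \xrightarrow{\times x^{n-i}} A_n \to A_{n-i}$ and $A_{n-i} \xrightarrow{\times x^i} A_n \to A_i$, which splice into a presentation of $A_i$ as a colimit of a sequence of copies of $A_n$; applying $M \otimes^L_{A_n}(-)$ (which preserves colimits, being a left adjoint) and then identifying the homotopy groups of the resulting realization with the homology of the associated complex, via the preparatory lemmas that follow, produces the same complex $\cdots \xrightarrow{\times x^i} M \xrightarrow{\times x^{n-i}} M \xrightarrow{\times x^i} M$ and hence the same answer. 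Either way, the discreteness of $M$ enters precisely in the final identification of the homology groups.
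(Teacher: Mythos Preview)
Your argument is correct and reaches the same periodic complex $\cdots \xrightarrow{\times x^i} M \xrightarrow{\times x^{n-i}} M \xrightarrow{\times x^i} M$ as the paper, but by a genuinely shorter route. You exploit the structural map of animated rings $S = \setZ[X]/(X^n) \to A_n$ coming from the pushout description $A_n \cong A \otimes^L_{\setZ[X]} S$, base-change to identify $M \otimes^L_{A_n} A_i$ with $M \otimes^L_S (S/X^iS)$, and then compute the latter as a classical Tor over the \emph{discrete} ring $S$ via the standard $2$-periodic free resolution; discreteness of $M$ enters exactly where you say. The paper does not introduce $S$ as an intermediate base. Instead it stays in $\Mod(A_n)$: it realizes $\setZ[X]/(X^i)$ as the geometric realization of a simplicial $\setZ[X]$-module obtained from the same periodic complex via Dold--Kan, commutes $A \otimes^L_{\setZ[X]}(-)$ and $M \otimes^L_{A_n}(-)$ past the colimit, identifies each simplicial term with a finite direct sum of copies of $M$, and then invokes the preparatory lemmas \Cref{GeometricRealizationisTotalization} and \Cref{GeometricRealizationisComplex} to recover the complex of $M$'s and read off homology. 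Your reduction buys a cleaner argument that sidesteps the Dold--Kan bookkeeping altogether; the paper's approach is more intrinsic to $\Mod(A_n)$ and is precisely what those subsequent lemmas are written to support. The alternative you sketch in your third paragraph---splicing the fiber sequences of \Cref{DistinguishedTriangleAn} into a filtered presentation of $A_i$ and invoking the realization lemmas---is essentially the paper's method.
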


\begin{proof}
    The \(\setZ[X]\)-module \(\setZ[X]/(X^i)\) is quasi-isomorphic to the complex of \(\setZ[X]\)-modules
    \begin{equation*}
        Z \defeq \cdots \xrightarrow{\times X^i} \setZ[X]/(X^n) \xrightarrow{\times X^{n-i}} \setZ[X]/(X^n) \xrightarrow{\times X^i} \setZ[X]/(X^n) \to 0.
    \end{equation*}
    Set the complex \(Z = \{Z_k\}_{k \in \setZ}\) and the quasi-isomorphism \(Z \to \setZ[X]/(X^i)\) in \(\Ch(\setZ[X])\).
    By the Dold--Kan correspondence \(\map{\DK}{\Ch_{\geq 0}(\Ch(\setZ[X]))}{\Fun(\Delta^{\opposite}, \Ch(\setZ[X]))}\), we have a simplicial object \(\DK_{\bullet}(Z')\) in \(\Ch(\setZ[X])\) for the object
    \begin{equation} \label{ResolZ}
        Z' \defeq (\cdots \xrightarrow{\times X^{n-i}} \setZ[X]/(X^n)[0] \xrightarrow{\times X^i} \setZ[X]/(X^n)[0] \to 0)
    \end{equation}
    in \(\Ch_{\geq 0}(\Ch(\setZ[X]))\).
    Applying \Cref{GeometricRealizationisComplex} for the simplicial \(\setZ[X]\)-module \(\DK_{\bullet}(Z')\), the geometric realization \(\abs{\DK_{\bullet}(Z')}\) in \(\mcalD(\setZ[X])\) is isomorphic to the totalization of \(Z' \in \Ch_{\geq 0}(\Ch(\setZ[X]))\), namely,
    \begin{equation} \label{DoldKanTotalization1}
        \abs{\DK_{\bullet}(Z')} \cong \iota_{\setZ[X]}(Z) \cong \setZ[X]/(X^i)
    \end{equation}
    in \(\mcalD(\setZ[X]) = \Mod(\setZ[X])\) where \(\iota_{\setZ[X]}\) is defined in \Cref{ColimitPreserving}.

    By \Cref{LemDerivedQuotient}, \(A_i\) is isomorphic to \(A \otimes^L_{\setZ[X]} \setZ[X]/(X^i)\) as an \(A_n\)-module.
    The isomorphism (\ref{DoldKanTotalization1}) is \(\Mod(\setZ[X])\) provides the following isomorphisms of \(A_n\)-modules:
    \begin{align*}
        M \otimes^L_{A_n} A_i & \cong M \otimes^L_{A_n} (A \otimes^L_{\setZ[X]} \setZ[X]/(X^i)) \cong M \otimes^L_{A_n} (A \otimes^L_{\setZ[X]} Z) \\
        & \cong M \otimes^L_{A_n} (A \otimes^L_{\setZ[X]} \abs{\DK_{\bullet}(Z')}) \cong \abs{M \otimes^L_{A_n} (A \otimes^L_{\setZ[X]} \DK_{\bullet}(Z'))}.
    \end{align*}
    Recall that the simplicial \(\setZ[X]\)-module \(\DK_{\bullet}(Z')\)\footnote{In \cite[Construction 1.2.3.5]{lurie2017Higher}, the value of the functor \(\DK\) is an \emph{additive} category. However, since any stable \(\infty\)-category \(\mcalC\) has finite colimits, we can construct the simplicial object \(\DK_{\bullet}(Z')\) in \(\mcalC\) for each non-negatively chain complex with a value \(\mcalC\).} is defined by
    \begin{equation*}
        \DK_n(Z') \defeq \bigoplus_{\alpha \colon [n] \twoheadrightarrow [k]} Z'_k \in \Mod(\setZ[X]),
    \end{equation*}
    where \(Z'_k = \setZ[X]/(X^n)[0]\) is the \(k\)-th term of the complex \(Z' \in \Ch_{\geq 0}(\Ch(\setZ[X]))\) (\ref{ResolZ}) and the sum is taken over all surjections \(\alpha \colon [n] \twoheadrightarrow [k]\) in the simplex category \(\Delta\).
    So the \(n\)-th term of the simplicial \(A_n\)-module \(M \otimes^L_{A_n} (A \otimes^L_{\setZ[X]} \DK_{\bullet}(Z'))\) can be written as
    \begin{align}
        & M \otimes^L_{A_n} (A \otimes^L_{\setZ[X]} \DK_n(Z')) = M \otimes^L_{A_n} (A \otimes^L_{\setZ[X]} \bigoplus_{\alpha \colon [n] \twoheadrightarrow [k]} Z_k) \label{DoldKanResolution} \\
        & \cong \bigoplus_{\alpha \colon [n] \twoheadrightarrow [k]} (M \otimes^L_{A_n} (A \otimes^L_{\setZ[X]} Z_k)) \cong 
        \bigoplus_{\alpha \colon [n] \twoheadrightarrow [k]} M. \nonumber
    \end{align}
    Here, we use the isomorphism \(A_n \cong A \otimes^L_{\setZ[X]} \setZ[X]/(X^n) = A \otimes^L_{\setZ[X]} Z_k\) of \(A_n\)-modules described in \Cref{LemDerivedQuotient} for each \(k \geq 0\).
    The map \(\map{\beta^*}{\DK_n(Z')}{\DK_{n'}(Z')}\) for \(\map{\beta}{[n']}{[n]}\) in \(\Delta\) is defined by an appropriate sum of the identity map on \(Z_k\) and the differential, \(\times X^{n-i}\) and \(\times X^i\).
    Through the above canonical isomorphism (\ref{DoldKanResolution}), the simplicial \(A_n\)-module \(M \otimes^L_{A_n} (A \otimes^L_{\setZ[X]} \DK_{\bullet}(Z'))\) is isomorphic to \(\DK_{\bullet}(\cdots \xrightarrow{\times x^{n-i}} M \xrightarrow{\times x^i} M \to 0)\), that is, we have an isomorphism in \(\Fun(\Delta^{\opposite}, \Mod(A_n))\);
    \begin{equation}
        M \otimes^L_{A_n} (A \otimes^L_{\setZ[X]} \DK_{\bullet}(Z')) \cong \DK_{\bullet}(\cdots \xrightarrow{\times x^{n-i}} M \xrightarrow{\times x^i} M \to 0). \label{DoldKanIsom}
    \end{equation}
    The right-hand side is the simplicial \(A_n\)-module corresponds to the non-negatively chain complex of \(A_n\)-modules \((\cdots \xrightarrow{\times x^{n-i}} M \xrightarrow{\times x^i} M \to 0) \in \Ch_{\geq 0}(\Mod(A_n))\).
    This complex is the image of the double complex of discrete \(\pi_0(A_n)\)-modules \((\cdots \xrightarrow{\times x^{n-i}} M[0] \xrightarrow{\times x^i} M[0] \to 0)\) via the (chain complex of the) canonical functor
    \begin{equation} \label{RestScalar}
        \Ch(\pi_0(A_n)) \xrightarrow{\iota_{\pi_0(A_n)}} \Mod(\pi_0(A_n)) \to \Mod(A_n),
    \end{equation}
    which maps \(M[0]\) to \(M\) and preserves coproducts.
    The former functor \(\iota_{\pi_0(A_n)}\) is defined by \Cref{ColimitPreserving} and the latter one is the restriction of scalars.
    Set the simplicial object \(F \defeq \DK_{\bullet}(\cdots \xrightarrow{\times x^{n-i}} M[0] \xrightarrow{\times x^i} M[0] \to 0)\) in \(\Ch(\pi_0(A_n))\).
    The right-hand side of (\ref{DoldKanIsom}) is the restriction of scalar to \(A_n\) of the simplicial \(\pi_0(A_n)\)-module \(\iota_{\pi_0(A_n)}(F)\).
    We need to calculate the colimit \(\abs{\iota_{\pi_0(A_n)}(F)} = \colim_{\Delta^{\opposite}} \iota_{\pi_0(A_n)}(F)\) in \(\mcalD(\pi_0(A_n))\).
    Applying \Cref{GeometricRealizationisComplex} for \(F\), the geometric realization \(\abs{\iota_{\pi_0(A_n)}(F)}\) in \(\mcalD(\pi_0(A_n))\) is isomorphic to
    \begin{equation*}
        \abs{\iota_{\pi_0(A_n)} \DK_{\bullet}(\cdots \xrightarrow{\times x^{n-i}} M[0] \xrightarrow{\times x^i} M[0] \to 0)} \cong \iota_{\pi_0(A_n)}(\cdots \xrightarrow{\times x^{n-i}} M \xrightarrow{\times x^i} M \to 0).
    \end{equation*}
    By using the colimit preserving property of the restriction of scalars (\cite[Corollary 4.2.3.7 (2)]{lurie2017Higher}), we have the following isomorphism in \(\mcalD(A_n)\):
    \begin{align*}
        M \otimes^L_{A_n} A_i & \cong \abs{M \otimes^L_{A_n} (A \otimes^L_{\setZ[X]} \DK_{\bullet}(Z))} \cong \abs{\DK_{\bullet}(\cdots \xrightarrow{\times x^{n-i}} M \xrightarrow{\times x^i} M \to 0)} \\
        & \cong \iota_{\pi_0(A_n)}(\cdots \xrightarrow{\times x^{n-i}} M \xrightarrow{\times x^i} M \to 0)
    \end{align*}
    by using the above calculation and (\ref{DoldKanIsom}).
    This shows that the \(i\)-th homotopy group of \(M \otimes^L_{A_n} A_i\) is the desired one.

\end{proof}

In the above proof, we use the following observation.

\begin{remark} \label{ColimitPreserving}
    Let \(A\) be a discrete ring.
    Recall that the \(\infty\)-category of \(A\)-modules \(\Mod(A)\) is equivalent to the derived \(\infty\)-category \(\mcalD(A) \defeq \dgNerve(\Ch(A)^{\circ})\) of (discrete) \(A\)-modules (see \cite[Definition 1.3.5.8 and Remark 7.1.1.16]{lurie2017Higher}).
    When emphasizing the model-theoretical construction of \(\mcalD(A)\), we use the notation \(\mcalD(A)\) instead of \(\Mod(A)\).

    By \cite[Proposition 1.3.5.15]{lurie2017Higher}, the functor \(\Ch(A) \to \dgNerve(\Ch(A)) \xrightarrow{L} \mcalD(A)\) exhibits \(\mcalD(A)\) as the underlying \(\infty\)-category of the model category \(\Ch(A)\) with a model structure (\cite[Proposition 1.3.5.3]{lurie2017Higher}).
    Namely, the functor of \(\infty\)-categories
    \begin{equation*}
        \iota_A \colon \Ch(A) \to \dgNerve(\Ch(A)) \xrightarrow{L} \mcalD(A)
    \end{equation*}
    induces the equivalence of \(\infty\)-categories \(\Ch(A)[W^{-1}] \cong \mcalD(A)\) where \(W\) is the class of quasi-isomorphisms in \(\Ch(A)\).

\end{remark}

To calculate the geometric realization, we use the following construction. Our notation is based on \cite[Problem 4.24]{bunke2013Differential} and \cite[Definition 2.2]{arakawa2025Homotopy}.

\begin{construction} \label{TotSimplicial}
    Let \(A\) be a discrete ring and let \(F\) be a simplicial object in \(\Ch(A)\).
    The \emph{totalization} \(\tot(F)\) of \(F\) is the totalization \(\Tot^{\oplus}(M_*(F))\) of the Moore complex \(M_*(F)\) of the simplicial chain complex \(F\).
    Explicitly, the totalization \(\tot(F)\) is defined as follows:
    \begin{equation*}
        \tot(F)_n \defeq \Tot^{\oplus}(M_*(F))_n = \bigoplus_{n = p+q} M_*(F)_{p, q} = \bigoplus_{n = q+p} F([q])_p
    \end{equation*}
    with the differential
    \begin{equation*}
        d_{p, q} \defeq (-1)^q d^{F([q])}_p + \sum_{i=0}^q (-1)^i\partial^{F([q])_p}_i \colon F([q])_p \to F([q])_{p-1} \oplus F([q-1])_p
    \end{equation*}
    where \(\map{d^{F([q])}_p}{F([q])_p}{F([q])_{p-1}}\) is the differential of the chain complex \(F([q]) \in \Ch(A)\) and \(\map{\sum_{i=0}^q (-1)^i \partial^{F_p([q])}_i}{F([q])_p}{F([q-1])_p}\) is the alternating sum of the face maps \(\{\partial^{F_p([q])}_i\}\) of the simplicial discrete \(A\)-module \(F_p \in \Fun(\Delta^{\opposite}, \Ch(A)^{\heartsuit})\).
\end{construction}

By using this, we have the following lemma which is a special case of \cite[Proposition 2.4 (1) and Lemma 2.8]{arakawa2025Homotopy} but we recall the proof in our case.

\begin{lemma} \label{GeometricRealizationisTotalization}
    Let \(A\) be a discrete ring and let \(F\) be a simplicial object in \(\Ch(A)\).
    Under the notation in \Cref{ColimitPreserving}, we can show the following isomorphism holds:
    \begin{equation*}
        \colim_{\Delta^{\opposite}} \iota_A F \cong \iota_A(\tot(F))
    \end{equation*}
    in \(\mcalD(A)\).
\end{lemma}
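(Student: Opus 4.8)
The plan is to compute both sides of the asserted isomorphism through explicit filtrations and then to match them. On the left, the realization $\colim_{\Delta^{\opposite}} Y$ of a simplicial object $Y$ in a stable $\infty$-category is the colimit of its skeletal filtration $\mathrm{sk}_0|Y| \to \mathrm{sk}_1|Y| \to \cdots$, and there are cofiber sequences $\mathrm{sk}_{N-1}|Y| \to \mathrm{sk}_N|Y| \to \Sigma^N Y_N^{\mathrm{nd}}$, where $Y_N^{\mathrm{nd}} \defeq \cofib(L_N Y \to Y_N)$ is the nondegenerate part and the connecting maps are induced by the alternating sum of face maps $\sum_i (-1)^i \partial_i$; this is the content of the $\infty$-categorical Dold--Kan correspondence \cite[\S 1.2.4]{lurie2017Higher}. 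On the right, I will filter $\tot(F)$ by simplicial degree. The two filtrations will be seen to have the same associated graded pieces $\Sigma^N \iota_A(N_*(F)_N)$, where $N_*(F)$ denotes the normalized Moore complex, and the same connecting maps.

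Applying the skeletal filtration to $Y \defeq \iota_A F$ (with $\iota_A$ the functor of \Cref{ColimitPreserving}): since the latching map $L_N F \to F([N])$ of a simplicial object in the additive, idempotent-complete category $\Ch(A)$ is a split monomorphism with cokernel $N_*(F)_N$, and $\iota_A$ is additive, the nondegenerate part of $\iota_A F$ in simplicial degree $N$ is $\iota_A(N_*(F)_N)$. Hence $\colim_{\Delta^{\opposite}} \iota_A F$ carries an exhaustive filtration with $N$-th associated graded $\Sigma^N \iota_A(N_*(F)_N)$ and connecting maps $\iota_A(\sum_i(-1)^i \partial_i)$.

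For the right-hand side, the classical Dold--Kan decomposition $M_*(F) \cong N_*(F) \oplus D_*(F)$ in $\Ch_{\geq 0}(\Ch(A))$ with $D_*(F)$ contractible, together with additivity of $\tot$, furnishes a chain homotopy equivalence $\tot(N_*(F)) \xrightarrow{\sim} \tot(M_*(F)) = \tot(F)$, so $\iota_A(\tot(F)) \simeq \iota_A(\tot(N_*(F)))$. The brutal truncations $F^N \defeq \tot(\sigma_{\leq N}(N_*(F))) \subseteq \tot(N_*(F))$ give short exact sequences of complexes $0 \to F^{N-1} \to F^N \to N_*(F)_N[N] \to 0$, which $\iota_A$ carries to cofiber sequences (a short exact sequence of complexes yields a distinguished triangle in $\mcalD(A)$); moreover $\tot(N_*(F)) = \colim_N F^N$ is an exhaustive filtered colimit of injections, which $\iota_A$ preserves since filtered colimits of complexes are exact. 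This equips $\iota_A(\tot(F))$ with an exhaustive filtration whose $N$-th associated graded is $\Sigma^N \iota_A(N_*(F)_N)$ and whose connecting maps are again $\iota_A(\sum_i(-1)^i\partial_i)$.

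It remains to upgrade ``same associated graded and same connecting maps'' into an actual equivalence of filtered objects, and then to pass to colimits; I expect this to be the main obstacle. The cleanest way is to realize the skeletal filtration of $\iota_A F$, via the $\infty$-categorical Dold--Kan equivalence, as the image of the single object $\iota_A(N_*(F)) \in \Ch_{\geq 0}(\mcalD(A))$, and then to check that $\iota_A$ intertwines the $1$-categorical totalization $\tot$ of a complex in $\Ch_{\geq 0}(\Ch(A))$ with the $\infty$-categorical realization of its image in $\Ch_{\geq 0}(\mcalD(A))$ --- which is precisely the content of the brutal-truncation argument above, the only subtlety being that $\iota_A$ does not preserve arbitrary cokernels but does send short exact sequences of complexes to cofiber sequences and does preserve filtered colimits. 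Note that the inner complexes $F([n])$ may be unbounded, but this causes no trouble: $\tot(F)$ is taken with direct sums, hence is already the colimit of its finite-stage truncations.
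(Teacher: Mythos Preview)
Your proposal is correct but takes a genuinely different route from the paper. The paper's proof is essentially a one-line reduction: it uses that restriction of scalars $\mcalD(A) \to \mcalD(\setZ)$ is conservative and preserves small colimits, and then cites the result over $\setZ$ directly from \cite[Problem 4.24]{bunke2013Differential}. Your approach instead supplies a self-contained argument, filtering both sides---the realization by its skeletal filtration, the totalization by brutal truncation of $N_*(F)$---and matching the two filtered objects through the $\infty$-categorical Dold--Kan correspondence of \cite[\S 1.2.4]{lurie2017Higher}. What your route buys is transparency: it explains \emph{why} $\tot$ computes the realization (both present the same object of $\Ch_{\geq 0}(\mcalD(A))$), and it avoids outsourcing the content to an external reference. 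The paper's route is much shorter but opaque. Your final paragraph correctly identifies the only real obstacle, upgrading the match of associated gradeds and connecting maps to an equivalence of filtered objects; the sketch you give is the right one, though in a full write-up you would want to spell out that $\iota_A$, being additive and sending degreewise-split short exact sequences to cofiber sequences, carries the $1$-categorical normalized complex $N_*(F) \in \Ch_{\geq 0}(\Ch(A))$ to the normalized complex of $\iota_A F$ in $\Ch_{\geq 0}(\mcalD(A))$---this is the compatibility that makes your final identification go through.
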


\begin{proof}
    Recall that the restriction of scalar \(\mcalD(A) \to \mcalD(\setZ)\) is faithful (on the homotopy categories) and preserves small colimits by \cite[Corollary 4.2.3.7 and Remark 7.1.1.16]{lurie2017Higher}.
    It suffices to show that the image of the complex \(\tot(F)\) in \(\mcalD(\setZ)\) is equivalent to the colimit of the diagram \(\iota_\setZ F \colon \Delta^{\opposite} \xrightarrow{\iota_A F} \mcalD(A) \to \mcalD(\setZ)\).
    This diagram is the same as the composition \(\iota_\setZ F \colon \Delta^{\opposite} \xrightarrow{F} \Ch(A) \to \Ch(\setZ) \xrightarrow{\iota_{\setZ}} \mcalD(\setZ)\) and it suffices to calculate this colimit.
    By \cite[Problem 4.24]{bunke2013Differential} (and its proof), we have the isomorphism
    \begin{equation*}
        \colim_{\Delta^{\opposite}} \iota_\setZ F \cong \iota_\setZ(\tot(F)) \in \mcalD(\setZ)
    \end{equation*}
    and we are done.
\end{proof}

The following typical case is used in \Cref{DerivedTensor}.

\begin{corollary} \label{GeometricRealizationisComplex}
    Let \(A\) be a discrete ring and let \(C = (\cdots \to C_i \to \cdots \to C_0 \to 0) \in \Ch_{\geq 0}(A)\) be a non-negatively complex of discrete \(A\)-modules.
    Set a double complex \(C' \defeq (\cdots \to C_i[0] \to \cdots \to C_0[0] \to 0)\) in \(\Ch_{\geq 0}(\Ch(A))\).
    Then the geometric realization of the simplicial object \(\iota_A \DK_{\bullet}(C')\) in \(\mcalD(A)\) is isomorphic to \(\iota_A(C)\) in \(\mcalD(A)\).
    Namely, we have an isomorphism
    \begin{equation*}
        \colim_{\Delta^{\opposite}} \iota_A \DK_{\bullet}(\cdots \to C_i[0] \to \cdots \to C_0[0] \to 0) \cong \iota_A(\cdots \to C_1 \to C_0 \to 0)
    \end{equation*}
    in \(\mcalD(A)\)
\end{corollary}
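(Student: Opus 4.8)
The plan is to reduce this to \Cref{GeometricRealizationisTotalization}. Applied to the simplicial object $F \defeq \DK_{\bullet}(C')$ of $\Ch(A)$, that lemma gives
\[
    \colim_{\Delta^{\opposite}} \iota_A \DK_{\bullet}(C') \cong \iota_A\bigl(\tot(\DK_{\bullet}(C'))\bigr)
\]
in $\mcalD(A)$, so everything comes down to identifying the chain complex $\tot(\DK_{\bullet}(C'))$ with $C$ inside $\mcalD(A)$.

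First I would unwind the right-hand side. For each $[n] \in \Delta$ we have $\DK_n(C') = \bigoplus_{\alpha \colon [n] \twoheadrightarrow [k]} C'_k$, a finite direct sum of the complexes $C'_k = C_k[0]$; in particular each $\DK_n(C')$ is concentrated in inner degree $0$. Hence $\DK_{\bullet}(C')$ is a simplicial object of the heart $\Ch(A)^{\heartsuit}$, that is---after the tautological identification of degree-$0$ complexes with $A$-modules---a simplicial $A$-module, and under this identification it is exactly the simplicial $A$-module $\DK_{\bullet}(C)$ attached to the connective complex $C \in \Ch_{\geq 0}(A)$ (now with the ambient additive category taken to be $A$-modules rather than $\Ch(A)$); the face and degeneracy maps match because those of $\DK$ are assembled from identity maps and the differentials of the input complex. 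Feeding this into \Cref{TotSimplicial}, every inner differential $d^{F([q])}_p$ vanishes, so the double complex $M_*(F)$ is concentrated in the single inner degree $0$ and $\tot(\DK_{\bullet}(C'))$ collapses to the unnormalized Moore complex
\[
    \bigl(\cdots \to \DK_2(C) \to \DK_1(C) \to \DK_0(C) \to 0\bigr)
\]
of the simplicial $A$-module $\DK_{\bullet}(C)$, whose differential is the alternating sum of the face maps.

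It then remains to invoke the Dold--Kan correspondence (see \cite[Construction 1.2.3.5]{lurie2017Higher}, or classically for the abelian category of $A$-modules): the normalized Moore complex $\mathrm{N}_{*}(\DK_{\bullet}(C))$ is naturally isomorphic to $C$, and the inclusion $\mathrm{N}_{*}(\DK_{\bullet}(C)) \hookrightarrow M_{*}(\DK_{\bullet}(C))$ is a chain homotopy equivalence, hence a quasi-isomorphism. Since $\iota_A$ inverts quasi-isomorphisms (\Cref{ColimitPreserving}), we obtain
\[
    \iota_A\bigl(\tot(\DK_{\bullet}(C'))\bigr) = \iota_A\bigl(M_{*}(\DK_{\bullet}(C))\bigr) \cong \iota_A\bigl(\mathrm{N}_{*}(\DK_{\bullet}(C))\bigr) \cong \iota_A(C)
\]
in $\mcalD(A)$, which combined with the first display proves the corollary. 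I do not anticipate a real obstacle: the mathematical content is standard Dold--Kan, and the only thing demanding care is the bookkeeping---keeping the $\Ch(A)$-valued double complex $C'$, the genuine complex $C$, the totalization functor of \Cref{TotSimplicial}, and the normalized versus unnormalized Moore complexes straight, and checking that passing between them commutes with $\DK$.
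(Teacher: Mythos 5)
Your argument is correct and is essentially identical to the paper's proof: both first invoke \Cref{GeometricRealizationisTotalization}, then observe that since each $C'_k = C_k[0]$ is concentrated in inner degree $0$ the totalization $\tot(\DK_{\bullet}(C'))$ collapses to the unnormalized Moore complex $M_*(\DK_{\bullet}(C))$, and finally use the Dold--Kan identifications $C \cong N_*(\DK_{\bullet}(C))$ and the quasi-isomorphism $N_* \hookrightarrow M_*$ to conclude.
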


\begin{proof}
    By \Cref{GeometricRealizationisTotalization}, we have the isomorphism
    \begin{equation*}
        \colim_{\Delta^{\opposite}} \iota_A \DK_{\bullet}(C') \cong \iota_A(\tot(\DK_{\bullet}(C')))
    \end{equation*}
    in \(\mcalD(A)\).
    We show that the totalization \(\tot(\DK_{\bullet}(C'))\) is quasi-isomorphic to the complex \(C = (\cdots \to C_1 \to C_0 \to 0)\) in \(\Ch(A)\).
    Since \(C'_i = C_i[0]\) is concentrated in degree \(0\), \(\DK_{\bullet}(C')([q])_p = (\DK_q(C'))_p = (\oplus_{\alpha \colon [n] \twoheadrightarrow [k]} C_k[0])_p = 0\) holds for all \(p \neq 0\).
    By the construction of \(\tot(-)\) (\Cref{TotSimplicial}), we have
    \begin{equation*}
        \tot(\DK_{\bullet}(C'))_n = \DK_{\bullet}(C')([n])_0 = (\oplus_{\alpha \colon [n] \twoheadrightarrow [k]} C_k[0])_0 = \oplus_{\alpha \colon [n] \twoheadrightarrow [k]} C_k = \DK_n(C)
    \end{equation*}
    in \(\Ch(A)^{\heartsuit}\) and the differential is \(\sum_{i=0}^q (-1)^i \partial_i^{\DK_n(C')_0} = \sum_{i=0}^q (-1)^i \partial_i^{\DK_n(C)}\), where \(\DK_n(C) = \DK_{\bullet}(C)([n])\) is the \(n\)-th term of the simplicial discrete \(A\)-module \(\DK_{\bullet}(C)\) corresponding to \(C \in \Ch_{\geq 0}(A)\).
    So the totalization \(\tot(\DK_{\bullet}(C')) \in \Ch(A)\) is the Moore complex \(M_*(\DK_{\bullet}(C))\) of the simplicial discrete \(A\)-module \(\DK_{\bullet}(C)\).
    The Moore complex \(M_*(\DK_{\bullet}(C))\) (in other words, unnormalized chain complex) and the normalized chain complex \(N_*(\DK_{\bullet}(C))\) are quasi-isomorphic by \cite[Proposition 1.2.3.17]{lurie2017Higher}.
    This shows that the totalization \(\tot(\DK_{\bullet}(C'))\) is quasi-isomorphic to the complex \(C \in \Ch(A)\) because of \(C \cong N_*(\DK_{\bullet}(C))\) in \(\Ch(A)\) (\cite[Lemma 1.2.3.11]{lurie2017Higher}), that is,
    \begin{equation*}
        \colim_{\Delta^{\opposite}} \iota_A \DK_{\bullet}(C') \cong \iota_A(\tot(\DK_{\bullet}(C'))) \cong \iota_A(\tot(C)) \cong \iota_A(C) = \iota_A(\cdots \to C_1 \to C_0 \to 0)
    \end{equation*}
    in \(\mcalD(A)\) where the first isomorphism follows from \Cref{GeometricRealizationisTotalization}.
\end{proof}

In the last part of this section, we record the relation between \(\Ext^i_B(M, N)\) and \(\Ext^i_{\pi_0(B)}(M, N)\) for a certain animated ring \(B\) and its modules \(M\) and \(N\).

\begin{lemma} \label{AssumptionLemma2}
    Let \(A\) be an animated ring and let \(L\) be a connective \(A\)-module.
    Let \(B\) an animated \(A\)-algebra.
    Assume that the connective \(B\)-module \(M \defeq L \otimes^L_A B\) is discrete.
    Then \(\Ext^i_{\pi_0(B)}(M, N)\) is a direct summand of \(\Ext^i_B(M, N)\) for any \(\pi_0(B)\)-module \(N\) and for each \(i \geq 0\).
\end{lemma}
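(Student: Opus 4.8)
The plan is to turn the statement into a splitting problem by a base‑change adjunction, and then to produce the splitting. Write \(\overline{B}\defeq\pi_0(B)\). Since \(M\) is discrete, it is canonically an \(\overline{B}\)-module, and \(\pi_0(M\otimes^L_B\overline{B})\cong\pi_0(M)\cong M\). The adjunction between extension of scalars \((-)\otimes^L_B\overline{B}\) and restriction of scalars along \(B\to\overline{B}\) is a tensored adjunction, so it upgrades to mapping spectra: for any \(\overline{B}\)-module \(N\) one gets a natural equivalence \(\mapspt_B(M,N)\simeq\mapspt_{\overline{B}}(M\otimes^L_B\overline{B},N)\), hence \(\Ext^i_B(M,N)\cong\Ext^i_{\overline{B}}(M\otimes^L_B\overline{B},N)\) for all \(i\). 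Under this identification the canonical ``restriction of scalars'' homomorphism \(\rho\colon\Ext^i_{\overline{B}}(M,N)\to\Ext^i_B(M,N)\) becomes \(\tau^*\), precomposition with the truncation map \(\tau\colon M\otimes^L_B\overline{B}\to\pi_0(M\otimes^L_B\overline{B})\cong M\) (which is the counit of the adjunction). Thus it suffices to produce an \(\overline{B}\)-linear section \(\sigma\) of \(\tau\): then \(\sigma^*\circ\tau^*=\operatorname{id}\) on \(\Ext^i_{\overline{B}}(-,N)\), so \(\tau^*=\rho\) is a split monomorphism of \(\overline{B}\)-modules and \(\Ext^i_{\overline{B}}(M,N)\) is an \(\overline{B}\)-module direct summand of \(\Ext^i_B(M,N)\).

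To build \(\sigma\), use associativity of \(\otimes^L\) to write \(M\otimes^L_B\overline{B}\cong M\otimes^L_{\overline{B}}C\) with \(C\defeq\overline{B}\otimes^L_B\overline{B}\), a connective \(\overline{B}\)-algebra with \(\pi_0(C)\cong\overline{B}\). The fold (multiplication) map \(\mu\colon C\to\overline{B}\) is a map of \(\overline{B}\)-algebras; being an isomorphism on \(\pi_0\) and landing in a discrete object, it is exactly the truncation \(C\to\pi_0(C)\), and the unit \(\overline{B}\to C\) is an \(\overline{B}\)-algebra section of \(\mu\). Applying \(M\otimes^L_{\overline{B}}(-)\) to the retraction \(\overline{B}\to C\xrightarrow{\mu}\overline{B}\) produces the desired retraction of \(\tau\): one checks that \(M\otimes^L_{\overline{B}}\mu\) coincides with \(\tau\) by comparing on \(\pi_0\) (here discreteness of \(M\) is used), and that the section obtained is \(\overline{B}\)-linear. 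Together with the previous paragraph this proves the lemma.

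The step that requires care is the last check, i.e.\ the \(\overline{B}\)-linearity bookkeeping for \(C=\overline{B}\otimes^L_B\overline{B}\): this algebra carries two a priori distinct \(\overline{B}\)-module structures coming from its two tensor factors, and one must ensure that the structure used to form \(M\otimes^L_{\overline{B}}C\), the one inducing the \(\overline{B}\)-action on \(M\otimes^L_B\overline{B}\), and the \(\overline{B}\)-action compatible with the chosen section of \(\mu\) are handled consistently — equivalently, that the two coface maps \(\overline{B}\rightrightarrows C\) of the \v{C}ech nerve of \(B\to\overline{B}\) induce the same \(\overline{B}\)-module structure on \(C\). I expect this to be the main obstacle. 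An alternative route avoiding it is to choose a resolution \(Q_\bullet\to M\) of \(M\) by free \(\overline{B}\)-modules, apply \(\mapspt_B(-,N)\) and \(\mapspt_{\overline{B}}(-,N)\) levelwise, and split the resulting cosimplicial \(\overline{B}\)-modules using a single \(\overline{B}\)-linear section of \(\mu\colon\overline{B}\otimes^L_B\overline{B}\to\overline{B}\), which exists because \(\pi_0(\overline{B}\otimes^L_B\overline{B})\cong\overline{B}\) is projective over \(\overline{B}\), so the obstruction lies in \(\Ext^1_{\overline{B}}(\overline{B},\tau_{\geq 1}(\overline{B}\otimes^L_B\overline{B}))=0\); in that approach the remaining subtlety is the naturality of the chosen section along the structure maps of \(Q_\bullet\). (Note that neither argument uses anything beyond discreteness of \(M\); the hypothesis \(M\cong L\otimes^L_A B\) is the form in which the lemma is applied.)
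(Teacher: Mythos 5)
Your reduction matches the paper's, and so does your candidate section: both proofs aim to produce a \(\pi_0(B)\)-linear section of the truncation \(\tau\colon M\otimes^L_B\pi_0(B)\to M\) and then finish by the tensor--forgetful adjunction, and the natural candidate is the base-change unit, which the paper presents as the map \(L\otimes^L_A B\to L\otimes^L_A\pi_0(B)\) induced by \(B\to\pi_0(B)\), and which you unpack as \(M\otimes^L_{\pi_0(B)}u\) with \(u\colon\pi_0(B)\to C\defeq\pi_0(B)\otimes^L_B\pi_0(B)\) a unit. The difficulty you flag is genuine and your write-up does not close it: for \(M\otimes^L_{\pi_0(B)}u\) to be defined and to land in \(\Mod(\pi_0(B))\), \(u\) would have to be a map of \(\pi_0(B)\)-bimodules (left-linear so that the tensor can be formed, right-linear so that the result is compatible with the module structure of \(M\otimes^L_B\pi_0(B)\)). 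Neither unit of \(C\) is bimodular in general: \(u_l(s)\cdot t\) and \(u_l(st)\) agree on \(\pi_0\) because \(\pi_0(C)\cong\pi_0(B)\), but not at the level of the full derived tensor product. Your second route hits the same wall: naturality of the levelwise splitting along a free resolution again reduces to bimodularity of the chosen section of \(\mu\).

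The paper never introduces \(C\). It records the commutative square whose top is \(M\to L\otimes^L_A\pi_0(B)\), whose verticals are truncations, and whose bottom is the induced map on \(\pi_0\); it identifies both bottom corners with \(\pi_0(L)\otimes_{\pi_0(A)}\pi_0(B)\) using Lurie, Higher Algebra 7.2.1.23, and reads off a retraction of the top arrow. This keeps the \(L\)-presentation of \(M\) in play throughout, so your closing parenthetical --- that only the discreteness of \(M\) is used --- is not what the paper does; you should either prove it or drop it. It is also worth noting that the paper's assertion that this square lives in \(\Mod(\pi_0(B))\) rather than merely in \(\Mod(B)\) silently includes exactly the \(\pi_0(B)\)-linearity of the top arrow (the base-change unit) that you are worried about, so the subtlety you identify is worth articulating carefully in either approach; passing through \(C\) makes the bimodule obstruction visible without removing it.
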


\begin{proof}
    The canonical map of \(A\)-modules \(B \to \pi_0(B)\) induces a commutative diagram
    \begin{center}
        \begin{tikzcd}
            M = L \otimes^L_A B \arrow[r] \arrow[d, "\cong"] & L \otimes^L_A \pi_0(B) \arrow[d] \\
            \pi_0(L \otimes^L_A B) \arrow[r]        & \pi_0(L \otimes^L_A \pi_0(B))   
        \end{tikzcd}
    \end{center}
    in \(\Mod(\pi_0(B))\).
    By \cite[Corollary 7.2.1.23]{lurie2017Higher}, the lower horizontal map is an isomorphism.
    The inverse map gives a splitting of the upper horizontal map and thus \(M = L \otimes^L_A B\) is a direct summand of \(L \otimes^L_A \pi_0(B)\) in \(\Mod(\pi_0(B))\). This shows that the \(\Ext\)-group \(\Ext^i_{\pi_0(B)}(M, N) = \Ext^i_{\pi_0(B)}(L \otimes^L_A B, N)\) is a direct summand of
    \begin{equation*}
        \Ext^i_{\pi_0(B)}(L \otimes^L_A \pi_0(B), N) \cong \Ext^i_{\pi_0(B)}(M \otimes^L_B \pi_0(B), N) \cong \Ext^i_B(M, N)
    \end{equation*}
    for each \(i \geq 0\) and for each \(\pi_0(B)\)-module \(N\).
\end{proof}

In \Cref{AssumptionLemma2}, the vanishing of \(\Ext^i_B(M, N)\) implies the vanishing of its direct factor \(\Ext^i_{\pi_0(B)}(M, N)\).
However, the following example shows that the converse does not hold in general. Remark that in the following example, the ring \(A\) is not regular and the \(A\)-module \(L\) has infinite projective dimension.
If \(L\) has finite projective dimension (or simply \(A\) is regular local), then the converse holds when \(B\) is a derived quotient of \(A\). See \Cref{AssumptionDerived}.

\begin{example} \label{ExtVanishingExample}
    Set \(A \defeq \setZ[x]/(px)\) for some fixed prime number \(p\).
    Let \(L\) be an \(A\)-module \(\setZ\).
    Take an animated \(A\)-algebra \(B \defeq A/^L p = (\setZ[x]/(px))/^L p\) and its module \(M \defeq L \otimes^L_A B \cong \setF_p\).
    Fix a discrete non-zero \(\pi_0(B)\)-module \(N\).
    Then \(M\) is discrete and \(\Ext^i_{\pi_0(B)}(M, N)\) vanishes for each \(i \geq 2\) but not necessarily \(\Ext^i_B(M, N)\) for \(i \geq 2\). Here is the proof.

    As a complex of \(A\)-modules, \(B\) is isomorphic to \(\Kos(p; \setZ[x]/(px))\) and thus
    \begin{equation*}
        M \cong \setZ \otimes^L_{\setZ[x]/(px)} \Kos(p; \setZ[x]/(px)) \cong (0 \to \setZ \xrightarrow{\times p} \setZ \to 0) \cong \setF_p[0]
    \end{equation*}
    is discrete.
    The connected component \(\pi_0(B)\) is isomorphic to \(\setZ[x]/(px, p) \cong \setF_p[x]\).
    The \(\setF_p[x]\)-module \(M = \setF_p\) has a free resolution \(0 \to \setF_p[x] \xrightarrow{\times x} \setF_p[x] \to 0\), which shows that \(\Ext^i_{\pi_0(B)}(M, N) \cong \Ext^i_{\setF_p[x]}(\setF_p, N) = 0\) for each \(i \geq 2\).
    Next, we calculate the \(\Ext\)-group
    \begin{align*}
        \Ext^i_B(M, N) & \cong \Ext^i_{\pi_0(B)}((L \otimes^L_A B) \otimes^L_B \pi_0(B), N) \cong \Ext^i_{\pi_0(B)}(L \otimes^L_A \pi_0(B), N) \\
        & \cong \Ext^i_{\setF_p[x]}(\setZ \otimes^L_{\setZ[x]/(px)} \setF_p[x], N).
    \end{align*}
    The \(\setZ[x]/(px)\)-module \(\setZ\) has a (periodic) free resolution \(\cdots \xrightarrow{\times x} \setZ[x]/(px) \xrightarrow{\times p} \setZ[x]/(px) \xrightarrow{\times x} \setZ[x]/(px) \to 0\).
    This shows that \(\setZ \otimes^L_{\setZ[x]/(px)} \setF_p[x]\) can be represented by the complex \(\cdots \xrightarrow{\times x} \setF_p[x] \xrightarrow{\times p} \setF_p[x] \xrightarrow{\times x} \setF_p[x] \to 0\), which is quasi-isomorphic to the complex of \(\setF_p[x]\)-modules
    \begin{equation*}
        (\cdots \to \setF_p \to 0 \to \setF_p \to 0) \cong \bigoplus_{k \geq 0} \setF_p[2k]
    \end{equation*}
    where \(\setF_p[2k]\) is a complex concentrated in degree \(2k\).
    Taking a free resolution \((0 \to \setF_p[x] \xrightarrow{\times x} \setF_p[x] \to 0) \in \Ch(\setF_p[x])\) of \(\setF_p[2k]\) which is concentrated in degree \(2k+1\) and \(2k\), we have
    \begin{align*}
        \Ext^i_{\setF_p[x]}(\setF_p[2k], N) & = H^i(0 \to \Hom_{\setF_p[x]}(\setF_p[x], N) \xrightarrow{\times x} \Hom_{\setF_p[x]}(\setF_p[x], N) \to 0) \\
        & \cong \begin{cases}
            N/xN & \text{if} \  i = 2k\\
            N[x] & \text{if} \ i = 2k+1\\
            0 & \text{otherwise}
        \end{cases}
    \end{align*}
    through the \(\setF_p[x]\)-module structure on \(N\) where \(N[x]\) is the submodule of \(x\)-torsion elements of \(N\).
    The \(\Ext\)-group \(\Ext^i_{\setF_p[x]}(\oplus_{k \geq 0} \setF_p[2k], N)\) is isomorphic to
    \begin{equation*}
        \Ext^i_{\setF_p[x]}(\oplus_{k \geq 0} \setF_p[2k], N) \cong \bigoplus_{k \geq 0} \Ext^i_{\setF_p[x]}(\setF_p[2k], N) \cong N/xN \  \text{or} \ N[x]
    \end{equation*}
    for each \(i \geq 0\). If \(\setF_p[x]\) acts on \(N\) by \(xN = 0\), this is a desired example (for example \(N = \setF_p\)).
\end{example}

\section{Liftings of Modules}

In this section, we generalize the properties of liftings of modules over usual rings introduced in \cite{auslander1993Liftings} to ones of modules over animated rings.

\begin{definition} \label{DefDerivedLifting}
    Let \(A \to B\) be a map of animated rings and let \(M\) be a \(B\)-module.
    A pair \((L, \varphi)\) of an \(A\)-module \(L\) such that \(\pi_0(L)\) is finite over \(\pi_0(A)\) and an isomorphism \(\varphi \colon M \xrightarrow{\cong} L \otimes^L_A B\) of \(B\)-modules is called a \emph{lifting} of the \(B\)-module \(M\) to \(A\).
    We often denote a lifting \((L, \varphi)\) by \(L\) if the isomorphism \(\varphi\) is clear from the context.
    A \(B\)-module \(M\) is called \emph{liftable} to \(A\) if it has a lifting to \(A\).
\end{definition}

Moreover, we can show the following equivalence of liftings of discrete modules.
Note that any discrete module \(M\) over an animated ring \(A\) is canonically a discrete module over \(\pi_0(A)\).

\begin{lemma} \label{EquivLifting}
    Let \(A \to B\) be a map of animated rings and let \(M\) be a discrete \(B\)-module.
    For a given connective \(A\)-module \(L\), the following are equivalent.
    \begin{enumalphp}
        \item There exists an isomorphism \(\pi_0(L \otimes^L_A B) \cong M\) of discrete \(B\)-modules such that the truncation map \(L \otimes^L_A B \to \pi_0(L \otimes^L_A B) \cong M\) is an isomorphism of \(B\)-modules.
        \item There exists an isomorphism of \(B\)-modules \(L \otimes^L_A B \cong M\), namely, \(L\) (and this isomorphism) is a lifting of the \(B\)-module \(M\) to \(A\).
        \item There exists an isomorphism \(\pi_*(L \otimes^L_A B) \cong \pi_*(M)\) of graded \(\pi_*(B)\)-modules.
    \end{enumalphp}
\end{lemma}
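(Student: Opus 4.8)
The plan is to run the cycle of implications (a) \(\Rightarrow\) (b) \(\Rightarrow\) (c) \(\Rightarrow\) (a), of which the first two are formal and only the last carries content.

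For (a) \(\Rightarrow\) (b), I would simply observe that the composite of the truncation map \(L\otimes^L_A B \to \pi_0(L\otimes^L_A B)\) with the isomorphism \(\pi_0(L\otimes^L_A B)\cong M\) supplied by (a) is an isomorphism of \(B\)-modules \(L\otimes^L_A B \cong M\), which is exactly the data required in (b). For (b) \(\Rightarrow\) (c), apply \(\pi_*\): an isomorphism of \(B\)-modules induces on homotopy groups an isomorphism of graded \(\pi_*(B)\)-modules, so (c) is immediate.

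The substantive step is (c) \(\Rightarrow\) (a). The key inputs are: (i) \(B\) is an animated ring and \(L\) is connective, so \(L\otimes^L_A B\) is connective (e.g.\ by \cite[Corollary 7.2.1.23]{lurie2017Higher}), whence \(\pi_i(L\otimes^L_A B)=0\) for \(i<0\); and (ii) \(M\) is discrete, so \(\pi_i(M)=0\) for \(i\neq 0\). Feeding these into the isomorphism of graded \(\pi_*(B)\)-modules from (c) gives \(\pi_i(L\otimes^L_A B)=0\) for all \(i\neq 0\); hence \(L\otimes^L_A B\) is discrete and its truncation map \(L\otimes^L_A B\to\pi_0(L\otimes^L_A B)\) is an equivalence of \(B\)-modules. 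In degree \(0\), (c) also yields an isomorphism \(\pi_0(L\otimes^L_A B)\cong\pi_0(M)=M\) of \(\pi_0(B)\)-modules; since both sides are discrete, the \(B\)-module structure factors through \(\pi_0(B)\), so this is automatically an isomorphism of \(B\)-modules. Combined with the previous sentence, this is precisely the assertion of (a).

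I do not expect a real obstacle: the argument is purely formal once one invokes the fact that a connective module whose homotopy is concentrated in degree zero is discrete. The only point needing a little care is the bookkeeping between \(\pi_0(B)\)-module and \(B\)-module structures, which is harmless because, once (c) is assumed, every module involved is discrete and the two notions agree.
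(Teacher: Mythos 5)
Your argument is correct and follows essentially the same route as the paper: the cycle (a) \(\Rightarrow\) (b) \(\Rightarrow\) (c) is dispatched as formal, and (c) \(\Rightarrow\) (a) is deduced from the vanishing of \(\pi_i(L\otimes^L_A B)\) for \(i\neq 0\) forced by discreteness of \(M\). The only cosmetic difference is that you invoke connectivity of \(L\otimes^L_A B\) to kill the negative homotopy groups, whereas that vanishing already follows directly from the graded isomorphism in (c) together with discreteness of \(M\); this is harmless redundancy, not an error.
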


\begin{proof}
    (a) \(\Rightarrow\) (b) \(\Rightarrow\) (c) is clear.
    (c) \(\Rightarrow\) (a) follows from the assumption that \(\pi_i(L \otimes^L_A B)\) vanishes for \(i \neq 0\) and is isomorphic to \(M\) for \(i = 0\).
\end{proof}

We prove the ascending property of the finiteness and discreteness of modules.

\begin{lemma} \label{DiscreteLiftingComp}
    Let \(A\) be an almost perfect animated \(\Lambda\)-algebra and let \(x\) be an element of \(\mfrakm\).
    Fix \(k \geq 1\).
    Let \(M\) be a discrete finitely generated \(A_k\)-module and let \(L\) be an \(A\)-module such that \(\pi_0(L)\) is finite over \(\pi_0(A)\).
    If \(L\) is a lifting of the \(A_k\)-module \(M\) to \(A\), then \(L\) is a discrete finitely generated \(A\)-module.
\end{lemma}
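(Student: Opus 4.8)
The plan is to deduce the statement from the topological base-change Lemma \Cref{LemBaseChangePerfect} applied to the surjection \(A \to A_k = A/^L x^k\), together with a Nakayama argument; throughout I take \(L\) to be connective (this holds in our applications of the lemma, and I comment at the end on removing the assumption). Write \(I \defeq x^k\pi_0(A)\) for the kernel of \(\pi_0(A) \twoheadrightarrow \pi_0(A_k) \cong \pi_0(A)/x^k\pi_0(A)\).

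First I would check the hypotheses of \Cref{LemBaseChangePerfect}. By \Cref{NoetherianAlmostPerfect}, \(A\) is Noetherian, \(\pi_0(A)\) is module-finite over the complete Noetherian local ring \(\Lambda\), and \(\pi_0(A)\) is \(\mfrakm\)-adically complete; in particular \(\pi_0(A)\) is Noetherian and \(\mfrakm\pi_0(A) \subseteq \operatorname{Jac}(\pi_0(A))\). Since \(x \in \mfrakm\) we have \(I \subseteq \mfrakm\pi_0(A)\), so \(\pi_0(A)\) is also \(I\)-adically complete (a Noetherian ring complete along an ideal is complete along every ideal it contains). Moreover \(\pi_0(L)\) is finite over the Noetherian ring \(\pi_0(A)\) and \(I \subseteq \operatorname{Jac}(\pi_0(A))\), so Krull's intersection theorem gives \(\bigcap_{n \geq 1} I^n\pi_0(L) = 0\), i.e. \(\pi_0(L)\) is \(I\)-adically separated. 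Hence \Cref{LemBaseChangePerfect} applies to \(A \to A_k\) and the connective \(A\)-module \(L\).

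Next I would upgrade finiteness of \(\pi_0(L)\) to finiteness of all the \(\pi_i(L)\). The ring \(A_k\) is Noetherian, and \(M \cong L \otimes^L_A A_k\) is a discrete finitely generated \(A_k\)-module, hence bounded below with all homotopy groups finite over \(\pi_0(A_k)\); so \(M\) is almost perfect over \(A_k\) by \Cref{EquivAlmostPerfect}. By part (b) of \Cref{LemBaseChangePerfect} this forces \(L\) to be almost perfect over \(A\), and then \Cref{EquivAlmostPerfect} (using that \(A\) is Noetherian) shows that \(L\) is bounded below with each \(\pi_i(L)\) finitely generated over \(\pi_0(A)\). Finally, tensoring with \(L\) the fiber sequence \(A \xrightarrow{\times x^k} A \to A_k\) (the \(x^k\)-analogue of \Cref{FiberSequenceModx}; see the proof of \Cref{LemDerivedQuotient}) yields a fiber sequence \(L \xrightarrow{\times x^k} L \to M\) in \(\Mod(A)\). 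Its long exact homotopy sequence, together with \(\pi_i(M) = 0\) for \(i \geq 1\), shows that \(\times x^k\) is surjective on \(\pi_i(L)\) for every \(i \geq 1\); since \(\pi_i(L)\) is finitely generated over \(\pi_0(A)\) and \(x^k \in \mfrakm\pi_0(A) \subseteq \operatorname{Jac}(\pi_0(A))\), Nakayama's lemma gives \(\pi_i(L) = 0\) for all \(i \geq 1\). With connectivity, \(L\) is discrete, and \(\pi_0(L)\) is finite over \(\pi_0(A)\) by hypothesis, so \(L\) is a discrete finitely generated \(A\)-module.

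I expect the main friction to be bookkeeping the hypotheses of \Cref{LemBaseChangePerfect}: the substantive inputs are that \(\pi_0(A)\) is complete along the (possibly non-primary) ideal \(I = x^k\pi_0(A)\) and that \(\pi_0(L)\) is \(I\)-adically separated, both resting on \(x \in \mfrakm\) and on \(A\) being almost perfect over \(\Lambda\). Once \(L\) is known to be almost perfect over the Noetherian ring \(A\), collapsing it to a discrete module via Nakayama is routine. If one does not wish to assume \(L\) connective at the outset, one has to argue that separately --- e.g. directly from the fiber sequence \(L \xrightarrow{\times x^k} L \to M\), using that \(\times x^k\) is surjective on \(\pi_{-1}(L)\) --- and that is really the only delicate point remaining in the hypotheses.
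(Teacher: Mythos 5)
Your proof is correct and follows essentially the same route as the paper: reduce to almost perfectness of \(L\) via \Cref{EquivAlmostPerfect} and \Cref{LemBaseChangePerfect}(b), then kill the higher homotopy of \(L\) with Nakayama's lemma applied to the fiber sequence \(L \xrightarrow{\times x^k} L \to M\). The only differences are cosmetic: you spell out the \(I\)-adic completeness of \(\pi_0(A)\) and the \(I\)-adic separatedness of \(\pi_0(L)\) (which the paper just gestures at via \Cref{NoetherianAlmostPerfect}), and you flag the implicit connectivity assumption on \(L\) that is also silently in force in the paper's own argument since \Cref{LemBaseChangePerfect} requires it.
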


\begin{proof}
    We have a fiber sequence \(L \xrightarrow{\times x^k} L \to L \otimes^L_A A_k \cong M\) and \(\pi_i(L) \xrightarrow{\times x^k} \pi_i(L)\) is surjective for all \(i \neq 0\) since \(M\) is discrete.
    By \Cref{EquivAlmostPerfect} and the finiteness of \(M\), \(M \cong L \otimes^L_A A_k\) is almost perfect over \(A_k\) and thus \(L\) is almost perfect over \(A\) (\Cref{LemBaseChangePerfect} (b)).
    Here, we use the assumption that \(A\) is \(\mfrakm\)-adically complete Noetherian (\Cref{NoetherianAlmostPerfect}) and \(\pi_0(L)\) is finite over \(\pi_0(A)\).
    Since \(\pi_i(L)\) is finite over the Noetherian ring \(\pi_0(A)\), \(\pi_i(L)\) vanishes for all \(i \geq 1\) by Nakayama's lemma and thus \(L\) is discrete.
\end{proof}

\begin{lemma} \label{DiscreteLiftingNil}
    Let \(A\) be an almost perfect animated \(\Lambda\)-algebra and let \(x\) be an element of \(\mfrakm\).
    Fix \(n, k \geq 1\).
    Take the canonical map \(A_{n+k} \to A_n\) of animated \(\Lambda\)-algebras.
    Let \(L_{n+k}\) be an \(A_{n+k}\)-module and let \(L_n\) be a discrete finitely generated \(A_n\)-module.
    If \(L_{n+k}\) is a lifting of the \(A_n\)-module \(L_n\) to \(A_{n+k}\), then \(L_{n+k}\) is a discrete finitely generated \(A_{n+k}\)-module.
\end{lemma}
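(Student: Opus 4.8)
The plan is to reduce to the square-zero case \(k=1\) and there combine the fiber sequences of \Cref{DistinguishedTriangleAn} with Nakayama's lemma; the bulk of the difficulty is concentrated in showing that the lifted module has no homotopy in degree \(1\).

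First I would record the basic reductions. The canonical map \(A_{n+k}\to A_n\) is surjective, and the kernel of \(\pi_0(A_{n+k})=\pi_0(A)/x^{n+k}\pi_0(A)\to\pi_0(A)/x^{n}\pi_0(A)=\pi_0(A_n)\) is \(I\defeq x^{n}\pi_0(A)/x^{n+k}\pi_0(A)\), which satisfies \(I^{k+1}=0\) because \(n(k+1)\ge n+k\); thus \(I\) is nilpotent. Since \(L_n\cong L_{n+k}\otimes^L_{A_{n+k}}A_n\) is a discrete finitely generated module over the Noetherian ring \(A_n\), it is connective and almost perfect there (\Cref{EquivAlmostPerfect}), so by \Cref{LemBaseChangePerfectNil} (applicable as \(I\) is nilpotent) the module \(L_{n+k}\) is connective (part (e)) and almost perfect (part (b)) over \(A_{n+k}\); hence every \(\pi_i(L_{n+k})\) is finitely generated over \(\pi_0(A_{n+k})\), which is finite over \(\Lambda\) so that the image of \(x\) lies in its Jacobson radical, and \(L_{n+k}\) is bounded below. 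Next, \(L_{n+k}\otimes^L_{A_{n+k}}A_{n+k-1}\) base-changes along \(A_{n+k-1}\to A_n\) to \(L_{n+k}\otimes^L_{A_{n+k}}A_n\cong L_n\) and has finitely generated \(\pi_0\), so it is a lifting of \(L_n\) to \(A_{n+k-1}\); by induction on \(k\) it is discrete finitely generated, and then \(L_{n+k}\) is a lifting of this discrete module along the square-zero map \(A_{n+k}\to A_{n+k-1}\). So we may assume \(k=1\).

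For \(k=1\) I would use the two fiber sequences of \Cref{DistinguishedTriangleAn} for \(A_{n+1}\), namely \(A_1\xrightarrow{\times x^{n}}A_{n+1}\to A_n\) and \(A_n\xrightarrow{\times x}A_{n+1}\to A_1\). Applying \(-\otimes^L_{A_{n+1}}L_{n+1}\) and \(L_{n+1}\otimes^L_{A_{n+1}}A_n\cong L_n\) gives fiber sequences \(P\xrightarrow{\phi}L_{n+1}\to L_n\) and \(L_n\to L_{n+1}\xrightarrow{\beta}P\) with \(P\defeq L_{n+1}\otimes^L_{A_{n+1}}A_1\). The composite of the canonical quotient \(A_{n+1}\to A_1\) with \(\times x^{n}\colon A_1\to A_{n+1}\) is an \(A_{n+1}\)-linear endomorphism of the free rank-one module \(A_{n+1}\), hence multiplication by an element of \(\pi_0(A_{n+1})\); inspecting it on \(\pi_0\) shows that element is \(x^{n}\), and tensoring with \(L_{n+1}\) yields \(\phi\circ\beta=\times x^{n}\) on \(L_{n+1}\). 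Since \(L_n\) is discrete, the long exact sequences show that \(\pi_i(\phi)\) is an isomorphism for \(i\ge1\) and \(\pi_i(\beta)\) is an isomorphism for \(i\ge 2\); thus \(\times x^{n}\) is an isomorphism on the finitely generated \(\pi_0(A_{n+1})\)-module \(\pi_i(L_{n+1})\) for every \(i\ge 2\), so \(\pi_i(L_{n+1})=0\) for \(i\ge 2\) by Nakayama's lemma. Hence \(L_{n+1}\) is \(1\)-truncated, and \(\pi_0(L_{n+1})\) is finitely generated by the definition of a lifting.

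The main obstacle is the remaining vanishing \(\pi_1(L_{n+1})=0\). At \(i=1\) the argument above only gives that \(\times x^{n}\) is injective on \(\pi_1(L_{n+1})\) with cokernel isomorphic to the image of \((0:_{\pi_0(L_{n+1})}x)\) in \(\pi_0(L_{n+1})/x^{n}\pi_0(L_{n+1})\cong\pi_0(L_n)\); one then has to show this cokernel vanishes, which by Nakayama forces \(\pi_1(L_{n+1})=0\). Showing the vanishing must exploit the full discreteness of \(L_n\), not merely that it is \(1\)-truncated: once \(L_{n+1}\) is known to be \(1\)-truncated one can express the low homotopy groups of \(L_n\cong L_{n+1}\otimes^L_{A_{n+1}}A_n\) in terms of \(\pi_0(L_{n+1})\) and \(\pi_1(L_{n+1})\) (again via the fiber sequences of \Cref{DistinguishedTriangleAn}, iterated, or via the base-change spectral sequence), and the discreteness of \(L_n\) then constrains these torsion-type groups so tightly that the cokernel above must be zero. (In the special case \(n=1\) this is transparent, since there \(P=L_{n+1}\otimes^L_{A_{n+1}}A_1\cong L_n\) is already discrete, so \(L_{n+1}\) sits in a cofiber sequence between two discrete modules and \(\pi_1(L_n)=0\) makes the connecting map injective on \(\pi_0\), killing \(\pi_1(L_{n+1})\) directly.) With \(\pi_1(L_{n+1})=0\) and \(\pi_0(L_{n+1})\) finitely generated, \(L_{n+1}\) — and hence by the reduction the original \(L_{n+k}\) — is a discrete finitely generated module.
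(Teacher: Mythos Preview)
Your approach is correct, and in fact your perceived ``main obstacle'' is not one: you have already done enough to conclude \(\pi_1(L_{n+1})=0\). You established that \(\times x^{n}\) is \emph{injective} on \(\pi_1(L_{n+1})\). But \(\pi_1(L_{n+1})\) is a \(\pi_0(A_{n+1})\)-module and \(x^{n+1}=0\) there; hence \(\times x^{2n}=0\) on \(\pi_1(L_{n+1})\) (as \(2n\ge n+1\)), while \(\times x^{2n}=(\times x^{n})^2\) is injective. This forces \(\pi_1(L_{n+1})=0\) immediately---no analysis of the cokernel, no finer use of the discreteness of \(L_n\), and no special pleading for \(n=1\) is required. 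The same observation also makes your reduction to \(k=1\) unnecessary: for arbitrary \(k\) one gets \(\times x^{n}\) injective on \(\pi_i(L_{n+k})\) for every \(i\ge1\) by the identical two-fiber-sequence argument, and \(x^{n+k}=0\) in \(\pi_0(A_{n+k})\), so a sufficiently high power of \(\times x^{n}\) is both injective and zero.

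The paper's proof is shorter: it works directly with general \(k\) and uses only the single fiber sequence \(A_k\xrightarrow{\times x^{n}}A_{n+k}\to A_n\). From the surjectivity of \(\pi_i(L_{n+k}\otimes^L_{A_{n+k}}A_k)\to\pi_i(L_{n+k})\) for \(i\neq0\) it asserts \(x^{n}\pi_i(L_{n+k})=\pi_i(L_{n+k})\) and concludes by Nakayama. Your introduction of the second fiber sequence \(A_n\xrightarrow{\times x^{k}}A_{n+k}\to A_k\) and the identification \(\phi\circ\beta=\times x^{n}\) as an honest endomorphism of \(L_{n+k}\) is precisely what justifies that assertion; so your argument is best read as a careful unpacking of the paper's one-line step, with the bonus that injectivity (rather than surjectivity) of \(\times x^n\) combined with nilpotence of \(x\) finishes the \(i=1\) case cleanly.
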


\begin{proof}
    The base change \(L_{n+k} \otimes^L_{A_{n+k}} A_n \cong L_n\) is discrete and almost perfect over \(A_n\) (here we use the assumption that \(A_n\) is Noetherian and \(L_n = \pi_0(L_n)\) is finite over \(\pi_0(A_n)\)).
    Since \(A_{n+k} \to A_n\) has the nilpotent kernel on \(\pi_0\), the \(A_{n+k}\)-module \(L_{n+k}\) is also connective and almost perfect over \(A_{n+k}\) by \Cref{LemBaseChangePerfectNil}.
    Considering the fiber sequence \(A_k \xrightarrow{\times x^n} A_{n+k} \to A_n\) given by \Cref{DistinguishedTriangleAn} and taking \(L_{n+k} \otimes^L_{A_{n+k}} - \), we have the following fiber sequence in \(\Mod(A_{n+k})\):
    \begin{equation*}
        L_{n+k} \otimes^L_{A_{n+k}} A_k \xrightarrow{\times x^n} L_{n+k} \to L_{n+k} \otimes^L_{A_{n+k}} A_n.
    \end{equation*}
    Since the last term \(L_{n+k} \otimes^L_{A_{n+k}} A_n \cong L_n\) is discrete, the map of \(\pi_0(A_{n+k})\)-modules \(\pi_i(L_{n+k} \otimes^L_{A_{n+k}} A_k) \xrightarrow{\times x^n} \pi_i(L_{n+k})\) is surjective for all \(i \neq 0\).
    Then we have \(x^n \pi_i(L_{n+k}) = \pi_i(L_{n+k})\) and thus \(\pi_i(L_{n+k})\) vanishes for all \(i \neq 0\) by Nakayama's lemma.
\end{proof}




The following theorem is a key point of Auslander, Ding, and Solberg's proof in \cite{auslander1993Liftings} which reduces the lifting problem of general quotients to the one of nilpotent quotients.

\begin{theorem}[{cf. \cite[Theorem 1.2]{auslander1993Liftings}}] \label{LiftingSequence}
    Let \(A\) be an almost perfect animated \(\Lambda\)-algebra and let \(x\) be an element of \(\mfrakm\).
    Take a discrete finitely generated \(A_1\)-module \(M\).
    Then the following are equivalent:
    \begin{enumerate}
        \item \(M\) is liftable to \(A\).
        \item There exists a sequence \(\{L_n\}_{n \geq 1} = \{L_1 \defeq M, L_2, \dots\}\) such that each \(L_n\) is an almost perfect animated \(A_n\)-module and \(L_{n+1}\) is a lifting of \(L_n\) to \(A_{n+1}\) for all \(n \geq 1\).
        \item There exists a sequence \(\{L_n\}_{n \geq 1} = \{L_1 \defeq M, L_2, \dots\}\) such that each \(L_n\) is an \(A_n\)-module and \(L_{n+1}\) is a lifting of \(L_n\) to \(A_{n+1}\) for all \(n \geq 1\).
    \end{enumerate}
    In this case, any such \(A_n\)-module \(L_n\) and any lifting \(L\) of \(M\) to \(A\) are discrete and finitely generated.
\end{theorem}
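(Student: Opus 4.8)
The plan is to prove the cycle of implications $(1)\Rightarrow(2)\Rightarrow(3)\Rightarrow(1)$, with the last one carrying essentially all of the content, and to read off the concluding discreteness and finiteness assertion along the way. For $(1)\Rightarrow(2)$: starting from a lifting $L$ of $M$ to $A$, \Cref{DiscreteLiftingComp} (using that $A$, being almost perfect over $\Lambda$, is Noetherian and $\mfrakm$-adically complete by \Cref{NoetherianAlmostPerfect}) forces $L$ to be a discrete finitely generated $A$-module; then $L_n\defeq L\otimes^L_A A_n=L/^L x^n$ is connective, is almost perfect over $A_n$ since almost perfectness is stable under base change (\Cref{DefAlmostPerfect}) and $\pi_0(L_n)\cong\pi_0(L)/x^n\pi_0(L)$ is finitely generated, and transitivity of base change gives $L_{n+1}\otimes^L_{A_{n+1}}A_n\cong L\otimes^L_A A_n\cong L_n$, so $L_{n+1}$ is a lifting of $L_n$ to $A_{n+1}$; with $L_1\cong M$ this is $(2)$. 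The implication $(2)\Rightarrow(3)$ is immediate, and assuming $(3)$, an induction on $n$ via \Cref{DiscreteLiftingNil} started from the discrete finitely generated module $L_1=M$ shows that every $L_n$ is discrete and finitely generated, while \Cref{DiscreteLiftingComp} shows that any lifting of $M$ to $A$ is discrete and finitely generated; this is the final clause.

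The heart of the proof is $(3)\Rightarrow(1)$. Given a tower $\{L_n\}$ as in $(3)$, the above shows each $L_n$ is a discrete finitely generated $A_n$-module, so $\pi_0(L_n)=L_n$ is killed by $x^n$. From the lifting isomorphism $L_n\cong L_{n+1}\otimes^L_{A_{n+1}}A_n$, the computation in \Cref{DerivedTensor}, applied to the discrete $A_{n+1}$-module $L_{n+1}$ and the ring $A_n$, extracts two facts: on $\pi_0$ it yields a surjection $L_{n+1}\twoheadrightarrow L_{n+1}/x^nL_{n+1}\cong L_n$ with kernel $x^nL_{n+1}$; and, because $L_n$ is discrete, the higher homotopy groups in \Cref{DerivedTensor} must vanish, forcing the torsion identity $L_{n+1}[x]=x^nL_{n+1}$, so that $L_{n+1}[x]$ is exactly the kernel of $L_{n+1}\to L_n$. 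Now set $L\defeq R\lim_n L_n$ in $\Mod(A)$, each $L_n$ regarded as an $A$-module via $A\to A_n$. Since the tower has surjective transition maps, $R^1\lim$ vanishes, so $L$ is discrete with $\pi_0(L)=\lim_n L_n$; this module is $(x)$-adically separated (each $L_m$ is killed by $x^m$), and $x$ is a nonzerodivisor on it, for if $xl=0$ then every component $l_n$ lies in $L_n[x]=\ker(L_n\to L_{n-1})$, whence $l_{n-1}=0$ for all $n\geq2$, i.e. $l=0$.

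It remains to identify $L\otimes^L_A A_1$ with $M$ and to establish the finiteness of $\pi_0(L)$. From the fiber sequence $L\xrightarrow{\times x}L\to L\otimes^L_A A_1$, together with $L$ discrete and $x$ a nonzerodivisor on it, $L\otimes^L_A A_1$ is the discrete module $(\pi_0(L)/x\pi_0(L))[0]$; the projection $\pi_0(L)\to L_1=M$ is surjective and kills $x$, so it suffices to show its kernel is $x\pi_0(L)$. Given $l=(l_n)\in\pi_0(L)$ with $l_1=0$, each $l_n$ lies in $\ker(L_n\to L_1)=xL_n$, hence the sets $S_n\defeq\{y\in L_n:xy=l_n\}$ are nonempty, and for $n\geq2$ the set $S_n$ is a coset of $L_n[x]=\ker(L_n\to L_{n-1})$; consequently, for every $n$ the transition map $L_{n+1}\to L_n$ is constant on $S_{n+1}$, with single value $p_n\in S_n$, and $p_{n+1}\mapsto p_n$ under $L_{n+1}\to L_n$, so $(p_n)_n\in\lim_n S_n\subseteq\pi_0(L)$ satisfies $x\cdot(p_n)_n=l$. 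Thus $L\otimes^L_A A_1\cong M$, which is a discrete finitely generated, hence almost perfect, $A_1$-module. Since $\pi_0(A)$ is $(x)$-adically complete Noetherian (it is module-finite over the complete local ring $\Lambda$) and $\pi_0(L)$ is $(x)$-adically separated, part (b) of \Cref{LemBaseChangePerfect} applied to $A\to A_1$ shows $L$ is almost perfect over $A$, so $\pi_0(L)$ is finitely generated over $\pi_0(A)$ by \Cref{EquivAlmostPerfect}; hence $L$, together with the isomorphism $M\cong L\otimes^L_A A_1$, is a lifting of $M$ to $A$, which is $(1)$.

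The step I expect to be the genuine obstacle is the identification $L\otimes^L_A A_1\cong M$: forming the limit $R\lim_n L_n$ is cheap, but showing that it is a lifting \emph{in the derived sense} requires controlling both the higher homotopy and the Mittag-Leffler behaviour of the tower $\{L_n\}$. The decisive input is the computation of \emph{all} the homotopy groups in \Cref{DerivedTensor}, which---because each $L_n$ is discrete---collapses to the torsion identities $L_n[x]=x^{n-1}L_n=\ker(L_n\to L_{n-1})$; these are exactly what make the nonzerodivisor claim and the coset/Mittag-Leffler argument above go through. The finiteness of $\pi_0(L)$ is a secondary subtlety, handled by feeding the completeness of $\pi_0(A)$ into part (b) of \Cref{LemBaseChangePerfect}.
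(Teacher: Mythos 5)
Your proof is correct and follows the same overall strategy as the paper---reduce to discreteness/finiteness via \Cref{DiscreteLiftingComp} and \Cref{DiscreteLiftingNil}, form the inverse limit $L=\lim_n L_n$, identify $L\otimes^L_A A_1\cong M$, and then establish finiteness of $\pi_0(L)$---but your execution of the last two steps differs in a way worth noting. For the identification $L\otimes^L_A A_1\cong M$, the paper tensors $L_{n+1}$ against the fiber sequence $A_n\xrightarrow{\times x}A_{n+1}\to A_1$ of \Cref{DistinguishedTriangleAn} to obtain the short exact sequences $0\to L_n\xrightarrow{\times x}L_{n+1}\to M\to 0$ directly, and then passes to the limit of this Mittag--Leffler exact sequence of inverse systems to get $0\to L\xrightarrow{\times x}L\to M\to 0$ in one stroke; you instead extract the torsion identities $L_{n+1}[x]=x^n L_{n+1}=\ker(L_{n+1}\to L_n)$ from the vanishing of the higher terms in \Cref{DerivedTensor}, and prove both $L[x]=0$ and $\ker(\pi_0(L)\to M)=x\pi_0(L)$ by a bare-hands coset/Mittag--Leffler argument at the level of elements. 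Both are sound; the paper's route is more uniform (the same exact-sequence mechanism also gives $L_n\cong L/x^nL$ for free), while yours makes the dependence on \Cref{DerivedTensor} explicit. For the finiteness of $\pi_0(L)$, the paper replaces $M$ by $L_n$ to get $L_n\cong L/x^nL$ and then defers to the argument in Auslander--Ding--Solberg; you instead invoke \Cref{LemBaseChangePerfect}(b) for the surjection $A\to A_1$ after verifying $(x)$-adic separatedness of $\pi_0(L)$ and $(x)$-adic completeness of $\pi_0(A)$, which is a more self-contained (and arguably cleaner) way to close the proof within the paper's own framework.
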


\begin{proof}
    Since \(M\) is discrete, each assumption is followed by the last assertion (\Cref{DiscreteLiftingComp} and \Cref{DiscreteLiftingNil}).

    \IMPLIES{(1)}{(2)}: Since \(M\) has a lifting \(L\) to \(A\), we can set an \(A_n\)-module \(L_n \defeq L/^L x^n \cong L \otimes^L_A A_n\) for each \(n \geq 1\).
    Since \(M\) is almost perfect over \(A_1\) and \(L_n \otimes^L_{A_n} A_1 \cong M\), \(L_n\) is almost perfect over \(A_n\) by \Cref{LemBaseChangePerfectNil}.
    The isomorphisms of \(A_n\)-modules
    \begin{equation*}
        L_{n+1} \otimes^L_{A_{n+1}} A_n \cong (L \otimes^L_A A_{n+1}) \otimes^L_{A_{n+1}} A_n \cong L \otimes^L_A A_n \cong L_n.
    \end{equation*}
    show that the \(A_{n+1}\)-module \(L_{n+1}\) is a lifting of \(L_n\) to \(A_{n+1}\).

    \IMPLIES{(2)}{(3)}: This is clear.

    \IMPLIES{(3)}{(1)}: By using \Cref{DiscreteLiftingNil} inductively, each \(L_n\) is an almost perfect animated \(A_n\)-module.
    Applying \(L_{n+1} \otimes^L_{A_{n+1}} -\) for the fiber sequence \(A_n \xrightarrow{\times x} A_{n+1} \to A_1\) in \Cref{DistinguishedTriangleAn}, we have a fiber sequence in \(\Mod(A_{n+1})\)
    \begin{equation} \label{IteratedFiberSequence}
        L_{n+1} \otimes^L_{A_{n+1}} A_n \xrightarrow{\id_{L_n} \otimes \times x} L_{n+1} \otimes^L_{A_{n+1}} A_{n+1} \to L_{n+1} \otimes^L_{A_{n+1}} A_1.
    \end{equation}
    Furthermore, the assumption is followed by the following isomorphisms in \(\Mod(A_{n+1})\):
    \begin{align*}
        L_{n+1} \otimes^L_{A_{n+1}} A_n & \cong L_n, \\
        L_{n+1} \otimes^L_{A_{n+1}} A_{n+1} & \cong L_{n+1}, \\
        L_{n+1} \otimes^L_{A_{n+1}} A_1 & \cong L_{n+1} \otimes^L_{A_{n+1}} A_n \otimes^L_{A_n} A_1 \cong L_n \otimes^L_{A_n} A_1 \cong \dots \cong L_2 \otimes^L_{A_2} A_1 \cong M.
    \end{align*}
    Then the above fiber sequence (\ref{IteratedFiberSequence}) induces a fiber sequence \(L_n \xrightarrow{\times x} L_{n+1} \to M\) in \(\Mod(A_{n+1})\) and thus we have an exact sequence \(0 \to L_n \xrightarrow{\times x} L_{n+1} \to M \to 0\) of discrete \(\pi_0(A_{n+1})\)-modues.
    Since the canonical surjective map \(L_{n+1} \twoheadrightarrow L_{n+1} \otimes^L_{A_{n+1}} A_n \cong L_n\) gives an inverse system \(\{L_n\}_{n \geq 1}\) of discrete \(\pi_0(A)\)-modules, we have an exact sequence of inverse systems of discrete \(\pi_0(A)\)-modules
    \begin{equation*}
        0 \to \{L_n\}_{n \geq 1} \xrightarrow{\times x} \{L_{n+1}\}_{n \geq 1} \to \{M\}_{n \geq 1} \to 0.
    \end{equation*}
    Taking the limit, \(L \defeq \lim_{n \geq 1} L_n\) is an \(\pi_0(A) \cong \lim_{n \geq 1} \pi_0(A_n)\)-module since \(\pi_0(A)\) is \(\mfrakm\)-adically complete (\Cref{NoetherianAlmostPerfect}).
    So we have an exact sequence
    \begin{equation*}
        0 \to L \xrightarrow{\times x} L \to M \to 0
    \end{equation*}
    of discrete \(\pi_0(A)\)-modules.
    This induces an isomorphism \(L \otimes^L_A A_1 \cong L/^L x \cong M\) as \(A_1\)-modules because of \(L/xL \cong M\) and \(L[x] = 0\).
    To prove \(L\) is a lifting of \(M\) to \(A\), we must show that \(\pi_0(L) = L\) is finite over \(\pi_0(A)\).
    As explained at the begining of this proof, each \(L_n\) is almost perfect and thus a discrete finitely generated \(\pi_0(A_n)\)-module by \Cref{EquivAlmostPerfect}.
    Applying the above argument replacing \(M\) with \(L_n\) and \(x\) with \(x^n\), we have an exact sequence \(0 \to L \xrightarrow{\times x^n} L \to L_n \to 0\) and thus \(L_n \cong L/x^nL\).
    As in the same proof of \cite[Theorem 1.2]{auslander1993Liftings}, \(L\) is a discrete finitely generated \(\pi_0(A)\)-module.
    %
    %
\end{proof}



\begin{lemma}[{cf. \cite[Lemma 1.4]{auslander1993Liftings}}] \label{LiftingEquivFiberSeq}
    Let \(A\) be an almost perfect animated \(\Lambda\)-algebra and let \(x\) be an element of \(\mfrakm\).
    Take a discrete finitely generated \(A_1\)-module \(M\) and its lifting \(L\) to \(A_n\).
    For an \(A_{n+1}\)-module \(E\), the following are equivalent:
    \begin{enumerate}
        \item \(E\) is a lifting of the \(A_n\)-module \(L\) to \(A_{n+1}\).
        \item There exists a fiber sequence in \(\Mod(A_{n+1})\) of the form \(M \to E \to L\) and there exists an isomorphism \(\pi_0(E \otimes^L_{A_{n+1}} A_1) \cong M\) of \(\pi_0(A_1)\)-modules.
    \end{enumerate}
    In this case, the \(A_{n+1}\)-module \(E\) is a discrete finitely generated \(\pi_0(A_{n+1})\)-module by \Cref{DiscreteLiftingNil}.
\end{lemma}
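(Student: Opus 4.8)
The plan is to establish \IMPLIES{(1)}{(2)} and \IMPLIES{(2)}{(1)} separately, in both directions feeding the fiber sequences of \Cref{DistinguishedTriangleAn} into the homotopy computations of \Cref{DerivedTensor}. Note first that $M$ is discrete by hypothesis and $L$ is a discrete finitely generated $\pi_0(A_n)$-module (for $n\geq 2$ this is \Cref{DiscreteLiftingNil} for the map $A_n\to A_1$; for $n=1$ one has $L\cong M$). Hence, in the situation of (2), the long exact sequence of the fiber sequence $M\to E\to L$ forces $\pi_i(E)=0$ for $i\neq 0$, so $E$ is automatically a discrete finitely generated $\pi_0(A_{n+1})$-module and the fiber sequence becomes a short exact sequence $0\to M\xrightarrow{\alpha}E\xrightarrow{\beta}L\to 0$. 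The tools I will repeatedly invoke are: the fiber sequences $A_1\xrightarrow{\times x^n}A_{n+1}\to A_n$ and $A_n\xrightarrow{\times x}A_{n+1}\to A_1$ from \Cref{DistinguishedTriangleAn} (the cases $k=1$ and $k=n$); the identities $L/xL\cong M$ and $L[x]=x^{n-1}L$, which follow from \Cref{DerivedTensor} applied to the discrete $A_n$-module $L$ because $L\otimes^L_{A_n}A_1\cong M$ is discrete (trivially for $n=1$); and the fact that $A$, hence $A_{n+1}$, is Noetherian (\Cref{NoetherianAlmostPerfect}), so that every surjective endomorphism of a finitely generated $\pi_0(A_{n+1})$-module is an isomorphism.

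\textbf{\IMPLIES{(1)}{(2)}.} Tensoring $A_1\xrightarrow{\times x^n}A_{n+1}\to A_n$ with $E$ over $A_{n+1}$ gives a fiber sequence $E\otimes^L_{A_{n+1}}A_1\to E\to E\otimes^L_{A_{n+1}}A_n$ in $\Mod(A_{n+1})$. By (1) the third term is $L$, and base change in stages along $A_{n+1}\to A_n\to A_1$ identifies the first term with $(E\otimes^L_{A_{n+1}}A_n)\otimes^L_{A_n}A_1\cong L\otimes^L_{A_n}A_1\cong M$, using that $L$ is a lifting of $M$ to $A_n$. This is a fiber sequence $M\to E\to L$ in $\Mod(A_{n+1})$, and $\pi_0(E\otimes^L_{A_{n+1}}A_1)\cong\pi_0(M)=M$ since $M$ is discrete; this is (2).

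\textbf{\IMPLIES{(2)}{(1)}.} We must construct an isomorphism $E\otimes^L_{A_{n+1}}A_n\cong L$ of $A_n$-modules. By \Cref{DerivedTensor} (for the discrete $A_{n+1}$-module $E$ with $i=1$) we have $\pi_0(E\otimes^L_{A_{n+1}}A_1)=E/xE$, so the second half of (2) says $E/xE\cong M$. The surjection $\beta$ induces a surjection $E/xE\twoheadrightarrow L/xL\cong M$, which is therefore an isomorphism, whence $\alpha(M)\subseteq xE$. Bootstrapping: if $\alpha(m)=xe$ then $\beta(e)\in L[x]=x^{n-1}L$, so $e-x^{n-1}e'\in\alpha(M)$ for a suitable $e'$, and hence $\alpha(m)=x^ne'$ (as $xM=0$); thus $\alpha(M)\subseteq x^nE$. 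Conversely $\beta(x^nE)=x^nL=0$, so $x^nE\subseteq\ker\beta=\alpha(M)$; therefore $\alpha(M)=x^nE$, and the isomorphism induced by $\beta$ reads $E/x^nE\cong L$.

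\textbf{Discreteness and main obstacle.} It remains to prove that $E\otimes^L_{A_{n+1}}A_n$ is discrete, for then it equals $\pi_0(E\otimes^L_{A_{n+1}}A_n)=E/x^nE\cong L$ and $E$ is a lifting of $L$ to $A_{n+1}$ (discreteness and finiteness of $E$ being already known, or \Cref{DiscreteLiftingNil}). By \Cref{DerivedTensor} (now with $i=n$), this discreteness is equivalent to the two conditions $E[x^n]=xE$ and $E[x]=x^nE$. For the first, $xE\subseteq E[x^n]$ since $x^{n+1}E=0$, and the canonical surjection $E/xE\twoheadrightarrow E/E[x^n]\cong x^nE=\alpha(M)\cong M$ is, once source and target are identified with $M$, a surjective endomorphism, hence an isomorphism, so $E[x^n]=xE$. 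The second then follows formally: if $xe=0$ then $e\in E[x]\subseteq E[x^n]=xE$, say $e=xf_1$; then $x^2f_1=0$, so $f_1\in E[x^2]\subseteq E[x^n]=xE$, say $f_1=xf_2$; iterating, after $n-1$ steps $e=x^{n-1}f_{n-1}$ with $f_{n-1}\in E[x^n]=xE$, so $e\in x^nE$. This finishes the proof. The step I expect to require the most care is precisely this last one: condition (2) only remembers the $\pi_0$ of $E\otimes^L_{A_{n+1}}A_1$, so the work lies in upgrading that bare datum — via the repeated use of ``surjective endomorphism of a Noetherian module $=$ isomorphism'' and the purely formal implication $E[x^n]=xE\Rightarrow E[x]=x^nE$ — to genuine discreteness of $E\otimes^L_{A_{n+1}}A_n$.
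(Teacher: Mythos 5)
Your proof is correct and follows the same overall architecture as the paper: $(1)\Rightarrow(2)$ by tensoring the fiber sequence $A_1 \xrightarrow{\times x^n} A_{n+1} \to A_n$ with $E$, and $(2)\Rightarrow(1)$ by computing $\pi_*(E\otimes^L_{A_{n+1}}A_n)$ via \Cref{DerivedTensor} and checking that the torsion groups $E[x^n]/xE$ and $E[x]/x^nE$ vanish. The paper, at the point where you carry out the bootstrapping argument (showing $\alpha(M)=x^nE$, then $E[x^n]=xE$ via a surjective endomorphism of the Noetherian module $M$, then $E[x]=x^nE$ by the iterated divisibility argument), simply writes that ``this can be shown by the same proof of \cite[Lemma 1.4]{auslander1993Liftings}'' and does not supply the details. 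Your write-up fills that gap honestly, and your identifications $L[x]=x^{n-1}L$, $E/xE\cong M$ via \Cref{DerivedTensor}, and the invocation of \Cref{EquivLifting} to turn the $\pi_*$-level isomorphism into a genuine lifting are all the right ingredients. The only micro-discrepancy is cosmetic: the paper derives discreteness and finiteness of $E$ via \Cref{DiscreteLiftingNil}, while you observe it directly from the fiber sequence in case (2) — both are fine.
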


\begin{proof}
    By \Cref{DiscreteLiftingNil}, \(L\) is a discrete finitely generated \(\pi_0(A_n)\)-module.

    \IMPLIES{(1)}{(2)}:
    First note that we have a fiber sequence in \(\Mod(A_{n+1})\):
    \begin{equation*}
        A_1 \xrightarrow{\times x^n} A_{n+1} \to A_n
    \end{equation*}
    by \Cref{DistinguishedTriangleAn}.
    Taking base change \(E \otimes^L_{A_{n+1}} -\), we have a fiber sequence in \(\Mod(A_{n+1})\)
    \begin{equation*}
        E \otimes^L_{A_{n+1}} A_1 \xrightarrow{\id_L \otimes \times x^n} E \otimes^L_{A_{n+1}} A_{n+1} \to E \otimes^L_{A_{n+1}} A_n.
    \end{equation*}
    Since \(E\) is a lifting of \(L\) to \(A_{n+1}\) and \(L\) is a lifting of \(M\) to \(A_n\), the following isomorphisms hold in \(\Mod(A_{n+1})\):
    \begin{align*}
        E \otimes^L_{A_{n+1}} A_1 & \cong (E \otimes^L_{A_{n+1}} A_n) \otimes^L_{A_n} A_1 \cong L \otimes^L_{A_n} A_1 \cong M \\
        E \otimes^L_{A_{n+1}} A_{n+1} & \cong E \\
        E \otimes^L_{A_{n+1}} A_n & \cong L.
    \end{align*}
    By these isomorphisms, the desired isomorphism \(\pi_0(E \otimes^L_{A_{n+1}} A_1) \cong M\) and the fiber sequence in \(\Mod(A_{n+1})\) exist.

    \IMPLIES{(2)}{(1)}: Assume that there exists a fiber sequence in \(\Mod(A_{n+1})\) of the form \(M \to E \to L\).
    Since \(M\) and \(L\) are discrete, so is \(E\) and thus there exists an exact sequence \(0 \to M \to E \to L \to 0\) of finite discrete \(\pi_0(A_{n+1})\)-modules.
    By \Cref{EquivLifting}, it suffices to show the isomorphism \(\pi_*(E \otimes^L_{A_{n+1}} A_n) \cong \pi_*(L)\) holds.
    The assumption that \(L\) is a lifting of \(M\) to \(A_n\) shows that
    \begin{equation*}
        \pi_k(L \otimes^L_{A_n} A_1) \cong 
        \begin{cases}
            L/xL \cong M & \text{if } k = 0 \\
            L[x]/x^{n-1}L \cong 0 & \text{if \(k\) is odd} \\
            L[x^{n-1}]/xL \cong 0 & \text{if \(k\) is even}
        \end{cases}
    \end{equation*}
    by \Cref{DerivedTensor}.
    Also, we have
    \begin{equation*}
        \pi_k(E \otimes^L_{A_{n+1}} A_n) \cong 
        \begin{cases}
            E/x^nL & \text{if } k = 0 \\
            E[x^n]/xE & \text{if \(k\) is odd} \\
            E[x]/x^nE & \text{if \(k\) is even}.
        \end{cases}
    \end{equation*}
    We must show that \(E/x^nE \cong L\) and \(\pi_k(E \otimes^L_{A_{n+1}} A_n) = 0\) for \(k > 0\) but this can be shown by the same proof of \cite[Lemma 1.4]{auslander1993Liftings} since \(\pi_0(A)\) is a Noetherian ring and \(M\) is a discrete Noetherian \(\pi_0(A)\)-module.

\end{proof}


\section{Fiber Sequence Corresponding to a Lifting}

In this section, we construct a fiber sequence \(\theta'_L\) of \(A_1\)-modules corresponding to a lifting.
This construction is based on the one for the case of complete intersections in \cite{auslander1993Liftings}.
The existence of such a fiber sequence is crucial in the proof of \Cref{LiftingCorollary} and one of the advantages of using higher algebra is that it can handle the derived category \(\Mod(A_n)\) of a non-discrete ring \(A_n\) in this manner.

\begin{construction} \label{LiftingExtElement}
    Let \(A\) be an almost perfect animated \(\Lambda\)-algebra and let \(x\) be an element of \(\mfrakm\).
    Fix \(n \geq 1\).
    Take a discrete finitely generated \(A_1\)-module \(M\) and its lifting \(L\) to \(A_n\) (in this case, \(L\) is a discrete finitely generated \(A_n\)-module by \Cref{DiscreteLiftingNil}).
    We define two fiber sequences \(\theta_L\) (\ref{ThetaL}) and \(\theta_L'\) (\ref{ThetaL'}) in \(\Mod(A_1)\) as follows.

    Choose a map of \(A\)-modules \(\map{p}{P}{L}\), where \(P\) is a finite free \(A\)-module and \(\map{\pi_0(p)}{\pi_0(P)}{L}\) is surjective as in the proof of \cite[Proposition 7.2.2.7]{lurie2017Higher} (for example, \(P \defeq \oplus_{x \in S} A\) where \(S \subseteq L\) is a system of generators of \(L\) over \(\pi_0(A_n)\)).
    Taking its fiber, we have a fiber sequence of \(A\)-modules
    \begin{equation}
        \Omega_A(L) \defeq \fib(p) \to P \xrightarrow{p} L. \label{FibL}
    \end{equation}
    Applying the base change functor \(- \otimes^L_A A_1 \cong -/^L x\) (see \Cref{LemDerivedQuotient}), we can take a fiber sequence \(\theta_L\) of \(A_1\)-modules
    \begin{equation}
        \theta_L \colon \Omega_A(L)/^L x \to P/^L x \xrightarrow{p/^L x} L/^L x. \label{ThetaL}
    \end{equation}
    In particular, this \(\theta_L\) is a distinguished triangle in the homotopy category \(\mathrm{h}\Mod(A_1)\).

    Next, we construct a fiber sequence \(\theta_L'\) in \(\Mod(A_1)\) from \(\theta_L\).
    In the homotopy category \(\mathrm{h}\Mod(\pi_0(A_n))\) of \(\pi_0(A_n)\)-modules, which is equivalent to the derived category \(D(\pi_0(A_n))\), we have a sequence of complexes:
    \begin{center}
        \begin{tikzcd}
            0 \arrow[r] & L \arrow[r, "\times x"] \arrow[d, Rightarrow, no head] & L \arrow[r] \arrow[d, two heads]              & 0 \\
            0 \arrow[r] & L \arrow[r, "\times x"]                                & L/xL \arrow[r]                                & 0 \\
            0 \arrow[r] & L/xL \arrow[r, "\times x"] \arrow[u, "\times x^{n-1}"] & L/xL. \arrow[r] \arrow[u, Rightarrow, no head] & 0
        \end{tikzcd}
    \end{center}
    Since \(L\) is a lifting of \(M\) to \(A_n\), the homotopy group \(\pi_i(L \otimes^L_{A_n} A_1)\) vanishes for \(i \neq 0\) and \(L\) is discrete.
    By \Cref{DerivedTensor}, we have \(L[x] = x^{n-1}L\) and \(L[x^{n-1}] = xL\), and in particular, the map of \(\pi_0(A_1)\)-modules
    \begin{equation*}
        L/xL \xrightarrow{\times x^{n-1}} x^{n-1}L = L[x] = \pi_1(0 \to L \xrightarrow{\times x} L \to 0)
    \end{equation*}
    is isomorphism.
    So the above diagram gives isomorphisms on homotopy groups;
    \begin{equation*}
        L[x] \xrightarrow{\id} L[x] \xleftarrow{\times x^{n-1}} L/xL~\text{on}~\pi_0~\text{and}~L/xL \xrightarrow{\id} L/xL \xleftarrow{\id} L/xL~\text{on}~\pi_1.
    \end{equation*}
    This shows that the maps \(L/^L x \to (0 \to L \xrightarrow{\times x} L/xL \to 0) \leftarrow L/xL[1] \oplus L/xL\) are isomorphisms in \(\mathrm{h}\Mod(\pi_0(A_n))\).
    This gives rise to isomorphisms
    \begin{equation} \label{LxLCoproduct}
        L/^L x \cong L/xL[1] \oplus L/xL
    \end{equation}
    in \(\Mod(A_n)\).
    Passing to \(\Mod(A_n)\), we have an isomorphism \(L/^L x \cong L/xL[1] \oplus L/xL\) in \(\Mod(A_n)\) (and thus in \(\Mod(A_1)\)).

    The surjective map of \(A_1\)-modules \(L/^L x \twoheadrightarrow \pi_0(L/^L x) \cong M\) gives the composition map \(q \colon P/^L x \xrightarrow{p/^L x} L/^L x \twoheadrightarrow M\) where the map \(\map{p/^L x}{P/^L x}{L/^L x}\) is the map in (\ref{ThetaL}).
    Taking its fiber in \(\Mod(A_1)\)
    \begin{equation} \label{SyzygyM}
        \Omega_{A_1}(M) \defeq \fib(q) \to P/^L x \xrightarrow{q} M.
    \end{equation}
    Considering the universal property of \(\Omega_{A_1}(M) = \fib(q)\), the following commutative diagram whose horizontal sequences are fiber sequences in \(\Mod(A_1)\) exists:
    \begin{equation} \label{DiagramFiberSequence}
        \begin{tikzcd}
            \Omega_A(L)/^L x \arrow[r] \arrow[d, "\exists ! r", dashed] & P/^L x \arrow[r, "p/^L x"] \arrow[d, Rightarrow, no head] & L/^L x \arrow[d, two heads] \\
            \Omega_{A_1}(M) \arrow[r]                                 & P/^L x \arrow[r, "q"]                                     & {M.}                     
        \end{tikzcd}
    \end{equation}
    Taking the long exact sequences of homotopy groups, we have the following commutative diagram in the category of discrete \(\pi_0(A_1)\)-modules for each \(i \in \setZ\):
    \begin{equation} \label{LongExactDiagram}
        \begin{tikzcd}
            \pi_{i+1}(P/^L x) \arrow[r, "\pi_{i+1}(p/^Lx)"] \arrow[d, Rightarrow, no head] & \pi_{i+1}(L/^L x) \arrow[r] \arrow[d, two heads] & \pi_i(\Omega_A(L)/^L x) \arrow[r] \arrow[d, "\pi_i(r)"] & \pi_i(P/^L x) \arrow[r, "\pi_i(p/^Lx)"] \arrow[d, Rightarrow, no head] & \pi_i(L/^L x) \arrow[d, two heads] \\
            \pi_{i+1}(P/^L x) \arrow[r, "\pi_{i+1}(q)"]                                    & \pi_{i+1}(M) \arrow[r]                           & \pi_i(\Omega_{A_1}(M)) \arrow[r]                        & \pi_i(P/^L x) \arrow[r, "\pi_i(q)"]                                    & \pi_i(M)                          
        \end{tikzcd}
    \end{equation}
    In particular, we can show the following results:
    \begin{enumalphp}
        \item If \(i \geq 2\), then \(\pi_i(L/^L x)\) and \(\pi_i(M)\) vanish and thus \(\map{\pi_i(r)}{\pi_i(\Omega_A(L)/^L x)}{\pi_i(\Omega_{A_1}(M))}\) is an isomorphism.
        \item The vanishing \(\pi_2(L/^L x) = 0\) implies that the map \(\pi_1(r)\) is injective.
        \item The vanishing \(\pi_1(M) = 0\) and the isomorphy \(\pi_0(L/^L x) \cong \pi_0(M)\) imply that the sequence \(\pi_1(L/^L x) \to \pi_0(\Omega_A(L)/^L x) \xrightarrow{\pi_0(r)} \pi_0(\Omega_{A_1}(M)) \to 0\) is exact.
        \item If \(i < 0\), the connectivity implies that \(\pi_i(r)\) is an isomorphism.
    \end{enumalphp}
    By the fiber sequence \(L/^L x[-1] \to \Omega_A(L)/^L x \to P/^L x\) given by \(\theta_L\) (\ref{ThetaL}), composing with \(r\) gives maps of \(A_1\)-modules
    \begin{equation} \label{DefAlpha}
        M \cong \tau_{\geq 0}(L/^L x[-1]) \to L/^L x[-1] \to \Omega_A(L)/^L x \xrightarrow{r} \Omega_{A_1}(M)
    \end{equation}
    where the first isomorphism follows from (\ref{LxLCoproduct}).
    This composition \(M \to \Omega_{A_1}(M)\) is zero. Set the composition of the first two maps to be \(\map{\alpha}{M}{\Omega_A(L)/^L x}\) in \(\Mod(A_1)\).
    Furthermore, the commutative diagram (\ref{DiagramFiberSequence}) provides \(\Omega_{A_1}(M) \to P/^L x \xrightarrow{p/^L x} L/^L x \cong L/xL[1] \oplus L/xL \twoheadrightarrow L/xL[1] \cong M[1]\) in \(\Mod(A_1)\). We have a sequence of maps
    \begin{equation}
        M \xrightarrow{\alpha} \Omega_A(L)/^L x \xrightarrow{r} \Omega_{A_1}(M) \to M[1] \label{DistinguishedTriangleR}
    \end{equation}
    in \(\Mod(A_1)\) such that the composition of any two successive maps is zero.
    In the following, we need to show that the sequence \(M \xrightarrow{\alpha} \Omega_A(L)/^L x \xrightarrow{r} \Omega_{A_1}(M)\) in \(\Mod(A_1)\) is a fiber sequence.
    By \Cref{TriangleDistinguishedTriangle} below and the discreteness of \(M\), it suffices to show that the homotopy long sequence induced from \(M \xrightarrow{\alpha} \Omega_A(L)/^L x \xrightarrow{r} \Omega_{A_1}(M) \to M[1]\) is exact.

    By (a), (b), (c), and (d) above, \(\pi_i(M) \to \pi_i(\Omega_A(L)/^L x) \xrightarrow{\pi_i(r)} \pi_i(\Omega_{A_1}(M))\) is exact for \(i \in \setZ\).

    We next show the exactness of \(\pi_i(\Omega_A(L)/^L x) \xrightarrow{\pi_i(r)} \pi_i(\Omega_{A_1}(M)) \to \pi_{i-1}(M)\) for \(i \in \setZ\).
    If \(i \geq 2\), the isomorphy of \(\pi_i(r)\) by (a) and the vanishing of \(\pi_{i-1}(M)\) shows the exactness.
    If \(i = 0\), the surjectivity of \(\pi_0(r)\) by (c) shows the exactness.
    It remains to show the exactness for \(i = 1\), that is, if an element \(a \in \pi_1(\Omega_{A_1}(M))\) maps to \(0\) via
    \begin{equation*}
        \pi_1(\Omega_{A_1}(M)) \to \pi_1(P/^L x) \xrightarrow{\pi_1(p/^L x)} \pi_1(L/^L x) \cong \pi_1(L/xL[1] \oplus L/xL) \xrightarrow{\cong} M
    \end{equation*}
    then it belongs to the image of \(\pi_1(r)\).
    The first map \(\pi_1(\Omega_{A_1}(M)) \to \pi_1(P/^L x)\) is an isomorphism by (\ref{SyzygyM}) and the discreteness of \(M\).
    By (\ref{DiagramFiberSequence}) and (\ref{LongExactDiagram}), we have the following commutative diagram:
    \begin{center}
        \begin{tikzcd}
            \pi_1(\Omega_A(L)/^L x) \arrow[r] \arrow[d, "\pi_1(r)"'] & \pi_1(P/^L x) \arrow[rr, "\pi_1(p/^L x)"] &  & {\pi_1(L/^L x) \cong \pi_1(L/xL[1] \oplus L/xL)} \\
            \pi_1(\Omega_{A_1}(M)) \arrow[ru, "\cong"] \arrow[rrru]  &                                           &  &                                                 
        \end{tikzcd}
    \end{center}
    such that the upper horizontal sequence is exact. This shows the desired claim.


    Next, we show the exactness of the sequence \(\pi_{i+1}(\Omega_{A_1}(M)) \to \pi_i(M) \xrightarrow{\pi_i(\alpha)} \pi_i(\Omega_A(L)/^L x)\).
    By the discreteness of \(M\), it suffices to show the case of \(i = 0\).
    The first map is
    \begin{equation} \label{A1syzygyPi1}
        \pi_1(\Omega_{A_1}(M)) \to \pi_1(P/^L x) \xrightarrow{\pi_1(p/^L x)} \pi_1(L/^L x) \cong M
    \end{equation}
    and the second map is
    \begin{equation*}
        \pi_0(\alpha) \colon M \cong \pi_1(L/^L x) \to \pi_0(\Omega_A(L)/^L x)
    \end{equation*}
    by (\ref{DefAlpha}).
    If \(\pi_0(\alpha)\) maps \(a \in M\) to \(0\) in \(\pi_0(\Omega_A(L)/^L x)\), there exists an element \(b \in \pi_1(P/^L x)\) such that \(\pi_1(p/^L x)(b) = a \in \pi_0(M)\) by the exactness of the upper vertical sequence in (\ref{LongExactDiagram}).
    This \(b\) maps to \(0\) via \(\map{\pi_1(q)}{\pi_1(P/^L x)}{\pi_1(M) = 0}\) and we can take \(c \in \pi_1(\Omega_{A_1}(M))\) which goes to \(b\) in \(\pi_1(P/^L x)\) by the lower exact sequence in (\ref{LongExactDiagram}).
    This \(c\) maps to \(a\) in \(\pi_1(L/^L x) \cong M\) via the first map of (\ref{A1syzygyPi1}). This shows the desired exactness.

    Therefore, the sequence (\ref{DistinguishedTriangleR}) in \(\Mod(A_1)\) gives a fiber sequence
    \begin{equation}
        \theta_L' \colon M \xrightarrow{\alpha} \Omega_A(L)/^L x \xrightarrow{r} \Omega_{A_1}(M) \label{ThetaL'}
    \end{equation}
    in \(\Mod(A_1)\) and an exact sequence
    \begin{equation}
        \pi_0(\theta_L') \colon M \xrightarrow{\alpha_0} \pi_0(\Omega_A(L)/^L x) \xrightarrow{r_0} \pi_0(\Omega_{A_1}(M)) \to 0 \label{ExactSequenceThetaL'}
    \end{equation}
    of discrete \(\pi_0(A_1)\)-modules.
    The two fiber sequences \(\theta_L\) and \(\theta_L'\) give the commutative diagram whose vertical maps are distinguished triangles in \(\mathrm{h}\Mod(A_1)\):
    \begin{equation} \label{DiagramThetaLThetaL'}
        \begin{tikzcd}
            & {L/^L x[-1]} \arrow[r, "-1"]     & \Omega_A(L)/^L x \arrow[r] \arrow[d, Rightarrow, no head] & P/^L x \arrow[r]                          & L/^L x \arrow[d, two heads] \arrow[r, "+1"] & {} \\
            {} \arrow[r, "-1"] & {M} \arrow[r] \arrow[u] & \Omega_A(L)/^L x \arrow[r, "r"]                           & \Omega_{A_1}(M) \arrow[r, "+1"] \arrow[u] & {M[1],}                                      &   
        \end{tikzcd}
    \end{equation}
    the upper sequence is \(\theta_L\) and the lower one is \(\theta'_L\).
\end{construction}

In the above construction, we use the following lemma.

\begin{lemma} \label{TriangleDistinguishedTriangle}
    Let \(\mcalC\) be a stable \(\infty\)-category with a \(t\)-structure (see \cite[Definition 1.2.11]{lurie2017Higher} for some notation).
    For example, \(\mcalC = \Mod(A)\) by \cite[Remark 1.4.3.8]{lurie2017Higher}.
    Let \(L \xrightarrow{g} M \xrightarrow{f} N \xrightarrow{h} L[1]\) be a sequence of maps in \(\mcalC\) such that any two successive maps are zero in \(\mcalC\).
    By the universality of \(\fib(f)\), we can take a map \(\map{s}{L}{\fib(f)}\) in \(\mcalC\) such that the following diagram commutes:
    \begin{center}
        \begin{tikzcd}
            L \arrow[r, "g"] \arrow[d, "s"', dashed] & M \arrow[r, "f"] \arrow[d, Rightarrow, no head] & N \arrow[d, Rightarrow, no head] \\
            \fib(f) \arrow[r]                         & M \arrow[r, "f"]                                  & N.                              
        \end{tikzcd}
    \end{center}
    Assume that \(M\) and \(N\) are connective and \(L\) is discrete.
    Then the following are equivalent:
    \begin{enumerate}
        \item The map \(L \xrightarrow{s} \fib(f)\) is an isomorphism in \(\mcalC\). In particular, \(L \xrightarrow{g} M \xrightarrow{f} N\) is a fiber sequence in \(\mcalC\).
        \item The homotopy long sequence \(\cdots \to \pi_i(L) \xrightarrow{\pi_i(g)} \pi_i(M) \xrightarrow{\pi_i(f)} \pi_i(N) \to \cdots\) in \(\mcalC^{\heartsuit}\) is an exact sequence.
    \end{enumerate}
\end{lemma}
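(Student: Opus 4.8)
The plan is to prove the equivalence $(1) \Leftrightarrow (2)$; the substance is in the implication $(2) \Rightarrow (1)$.

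For $(1) \Rightarrow (2)$: if $s$ is an equivalence, then $L \xrightarrow{g} M \xrightarrow{f} N$ is identified, via $s$, with the canonical fiber sequence $\fib(f) \to M \to N$, and hence fits into a long exact sequence of homotopy objects in $\mcalC^{\heartsuit}$ by the $\delta$-functor formalism attached to the $t$-structure (the truncation functors are exact). It then remains to check that the connecting map of this fiber sequence is carried to $h$ under the identification $\fib(f)[1] \simeq \cofib(f)$ and the equivalence $s[1]$; this is forced by the vanishing of the successive composites $h \circ f \simeq 0$ and $g[1] \circ h \simeq 0$, exactly as in the last step of the converse direction below.

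For $(2) \Rightarrow (1)$: the goal is to show $s \colon L \to \fib(f)$ is an equivalence. First I would reduce this to the statement that $\pi_i(s)$ is an isomorphism for every $i \in \setZ$. This reduction is legitimate because $\fib(s)$ is bounded below: since $M$ and $N$ are connective, $\cofib(f) \simeq \fib(f)[1]$ is connective, so $\fib(f)$ is $(-1)$-connective; and since $L$ is discrete, $\fib(s)$ is then $(-2)$-connective. As the $t$-structure is right complete (for instance on $\mcalC = \Mod(A)$), a bounded-below object all of whose homotopy objects vanish is zero, so $\pi_*(s)$ being an isomorphism gives $\fib(s) \simeq 0$. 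Moreover, the exactness in (2) together with discreteness of $L$ forces $\pi_i(f)$ to be surjective for $i \geq 2$ and injective for $i \geq 1$, whence the genuine long exact sequence of $\fib(f) \to M \to N$ shows that $\fib(f)$ is discrete as well; so it even suffices to treat the single degree $i = 0$.

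Finally, to see that $\pi_0(s)\colon \pi_0(L) \to \pi_0(\fib(f))$ is an isomorphism, I would place side by side the relevant portion of the genuine long exact sequence of the fiber sequence $\fib(f) \xrightarrow{j} M \xrightarrow{f} N$,
\begin{equation*}
    \pi_1(M) \xrightarrow{\pi_1(f)} \pi_1(N) \xrightarrow{\partial} \pi_0(\fib(f)) \xrightarrow{\pi_0(j)} \pi_0(M) \xrightarrow{\pi_0(f)} \pi_0(N),
\end{equation*}
and the given exact sequence from (2),
\begin{equation*}
    \pi_1(M) \xrightarrow{\pi_1(f)} \pi_1(N) \xrightarrow{\pi_0(h')} \pi_0(L) \xrightarrow{\pi_0(g)} \pi_0(M) \xrightarrow{\pi_0(f)} \pi_0(N),
\end{equation*}
where $\partial$ is the connecting map and $\pi_0(h')$ is induced by $h[-1] \colon N[-1] \to L$. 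Both exhibit their middle term as an extension of $\ker(\pi_0(f))$ by $\mathrm{coker}(\pi_1(f))$, and $\pi_0(s)$ is automatically compatible with the surjections onto $\ker(\pi_0(f))$ because $\pi_0(j) \circ \pi_0(s) = \pi_0(g)$. The main obstacle is the remaining compatibility: one must show $\pi_0(s)$ carries $\mathrm{im}(\pi_0(h'))$ isomorphically onto $\mathrm{im}(\partial)$, equivalently that $\pi_0(s) \circ \pi_0(h') = \pm\,\partial$. I would prove this by identifying $s[1] \circ h \colon N \to \fib(f)[1]$ with the canonical connecting map: the hypothesis $g[1] \circ h \simeq 0$, the identity $j[1] \circ (\text{connecting map}) \simeq 0$, and the universal property of $\fib(j[1]) \simeq N$ pin down $s[1] \circ h$ up to the canonical map, and a comparison of nullhomotopies shows it agrees with $\partial$ up to a unit. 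Granting this, the five lemma gives that $\pi_0(s)$ is an isomorphism, finishing the proof; running the same comparison in every degree gives an alternative argument not using the discreteness of $\fib(f)$.
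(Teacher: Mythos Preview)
Your overall strategy matches the paper's---show $\fib(f)$ is discrete and then argue that $\pi_0(s)$ is an isomorphism---and you are right to flag the compatibility $\pi_0(s)\circ\pi_0(h')=\pm\partial$ as the real content; the paper simply writes down exact rows $0\to\pi_0(L)\to\pi_0(M)\to\pi_0(N)$ and $0\to\pi_0(\fib(f))\to\pi_0(M)\to\pi_0(N)$ and invokes the five lemma, never justifying the left-hand zeros.

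But your resolution of this obstacle does not work, and in fact cannot: the lemma is false as stated. Take $\mcalC=\Mod(\setZ)$, $L=\setZ$, $M=0$, $N=\setZ[1]$, with $g=0$, $f=0$, and $h\colon\setZ[1]\to\setZ[1]$ an isomorphism. Then $L$ is discrete, $M$ and $N$ are connective, all successive composites vanish, and the long sequence in (2) is exact (its only nontrivial portion is $0\to\setZ\xrightarrow{\sim}\setZ\to 0$ around degrees $1,0$). Here $\fib(f)=\Omega(\setZ[1])\cong\setZ$, and since $M=0$ the constraint $j\circ s\simeq g$ is vacuous: every map $s\colon\setZ\to\setZ$ arises from some choice of nullhomotopy of $f\circ g$. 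Choosing the nullhomotopy so that $s=0$ gives an instance where (2) holds and (1) fails. Your step ``a comparison of nullhomotopies shows $s[1]\circ h$ agrees with $\partial$ up to a unit'' breaks exactly here: $s[1]\circ h=0$ while $\partial$ is an equivalence. The underlying problem is that the nullhomotopy of $g[1]\circ h$ and the nullhomotopy of $f\circ g$ defining $s$ are independent pieces of data, and nothing in the hypotheses ties them together. The paper's argument fails at the same example: its row $0\to\pi_0(L)\to\pi_0(M)$ becomes $0\to\setZ\to 0$. A correct statement needs an extra hypothesis---for instance that $\pi_1(f)$ is surjective, which forces both $\pi_0(g)$ and $\pi_0(j)$ to be injective and lets the paper's four-term comparison go through cleanly.
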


\begin{proof}
    (1) \(\Rightarrow\) (2): This follows from a general fact about \(t\)-structures.

    (2) \(\Rightarrow\) (1): By the discreteness of \(L\), \(\map{\pi_i(f)}{\pi_i(M)}{\pi_i(N)}\) is an isomorphism for \(i \neq 0\) and surjective for \(i = 0\).
    Considering the long exact sequence of the fiber sequence \(\fib(f) \to M \xrightarrow{f} N\), we can show that \(\pi_i(\fib(f)) = 0\) for \(i \neq 0\). The fiber \(\fib(f)\) is discrete.
    The connectivity of \(M\) and \(N\) implies the commutative diagram whose horizontal sequences are exact
    \begin{center}
        \begin{tikzcd}
            0 \arrow[r] \arrow[d, Rightarrow, no head] & \pi_0(L) \arrow[r, "\pi_0(g)"] \arrow[d, "\pi_0(s)"] & \pi_0(M) \arrow[r, "\pi_0(f)"] \arrow[d, Rightarrow, no head] & \pi_0(N) \arrow[d, Rightarrow, no head]\\
            0 \arrow[r]                              & \pi_0(\fib(f)) \arrow[r]                       & \pi_i(M) \arrow[r, "\pi_0(f)"]                       & \pi_0(N).
        \end{tikzcd}
    \end{center}
    The five lemma implies that \(\pi_0(s)\) is an isomorphism and we are done.
\end{proof}

\section{\texorpdfstring{\(\Ext\)}{Ext}-vanishings and Liftings of Modules}

In this section, we prove our main theorem (\Cref{LiftingCorollary}). The following argument is a ``derived'' generalization of \cite{auslander1993Liftings}.
We fix an integer \(n \geq 1\).



\begin{lemma} \label{LiftingSplits}
    Let \(A\) be an almost perfect animated \(\Lambda\)-algebra and let \(x\) be an element of \(\mfrakm\).
    Take a discrete finitely generated \(A_1\)-module \(M\) and its lifting \(L\) to \(A_n\).
    We can take a fiber sequence \(\theta_L'\) in \(\Mod(A_1)\) and an exact sequence \(\pi_0(\theta_L')\) in \(\Mod(A_1)^{\heartsuit}\) by (\ref{ThetaL'}) and (\ref{ExactSequenceThetaL'}).
    Then the following are equivalent:
    \begin{enumalphp}
        \item The map \(\map{\pi_0(\alpha)}{M}{\pi_0(\Omega_A(L)/^L x)}\) is a split injection in the category \(\Mod(A_1)^{\heartsuit}\) of discrete \(\pi_0(A_1)\)-modules.
        \item The fiber sequence \(\theta_L'\) splits in \(\mathrm{h}\Mod(A_1)\).
        \item The map \(\map{\alpha}{M}{\Omega_A(L)/^L x}\) induces an isomorphism \(\Omega_A(L)/^L x \cong M \oplus \Omega_{A_1}(M)\) in \(\mathrm{h}\Mod(A_1)\).
        \item The \(A_n\)-module \(L\) is liftable to \(A_{n+1}\).
    \end{enumalphp}
    In this case, any lifting \(E\) of the \(A_n\)-module \(L\) to \(A_{n+1}\) is a discrete finitely generated \(A_{n+1}\)-module by \Cref{DiscreteLiftingNil}.
\end{lemma}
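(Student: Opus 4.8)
The four conditions fall into a \emph{formal} group $(a)\Leftrightarrow(b)\Leftrightarrow(c)$, which carries the homotopy-theoretic content, and the \emph{substantive} equivalence $(c)\Leftrightarrow(d)$, which is the derived counterpart of \cite[Lemma 1.5]{auslander1993Liftings}. The plan is to prove $(a)\Leftrightarrow(b)\Leftrightarrow(c)$ first, then $(d)\Rightarrow(c)$ and $(c)\Rightarrow(d)$, and finally the closing assertion.

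\emph{The formal equivalences.} Since $\Mod(A_1)$ is stable, the fiber sequence $\theta_L'$ of (\ref{ThetaL'}) extends to a distinguished triangle $M\xrightarrow{\alpha}\Omega_A(L)/^L x\xrightarrow{r}\Omega_{A_1}(M)\xrightarrow{\partial}M[1]$ in $\mathrm{h}\Mod(A_1)$, and it is a standard fact about triangulated categories that this triangle splits --- i.e.\ $\partial\simeq 0$, equivalently $\alpha$ admits a retraction, equivalently $r$ admits a section --- exactly when $\alpha$ induces an isomorphism $\Omega_A(L)/^L x\cong M\oplus\Omega_{A_1}(M)$ carrying $\alpha$ to the inclusion and $r$ to the projection; this is $(b)\Leftrightarrow(c)$. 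For $(a)\Leftrightarrow(b)$ the point is that $\Omega_A(L)/^L x=\fib(p)\otimes^L_A A_1$ is connective --- because $\Omega_A(L)=\fib(p\colon P\to L)$ is connective (as $\pi_0(p)$ is surjective and $P$, $L$ are connective) and $A_1$ is connective --- while $M$ is discrete. Since $M$ is $0$-truncated we therefore have canonical identifications
\begin{equation*}
    \Ext^0_{A_1}(\Omega_A(L)/^L x,M)\cong\Hom_{\pi_0(A_1)}(\pi_0(\Omega_A(L)/^L x),M),\qquad \Ext^0_{A_1}(M,M)\cong\Hom_{\pi_0(A_1)}(M,M),
\end{equation*}
compatible with precomposition by $\alpha$ and by $\pi_0(\alpha)$ respectively; hence a $\pi_0(A_1)$-linear retraction of $\pi_0(\alpha)$ lifts to a retraction of $\alpha$ in $\mathrm{h}\Mod(A_1)$ (using again that $M$ is discrete, so an endomorphism of $M$ that is the identity on $\pi_0$ is the identity), which gives $(a)\Rightarrow(b)$. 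The reverse implication follows by applying $\pi_0$ to a splitting and invoking the exact sequence $\pi_0(\theta_L')$ of (\ref{ExactSequenceThetaL'}).

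\emph{The equivalence $(c)\Leftrightarrow(d)$.} Assume $(d)$ and let $E$ be a lifting of $L$ to $A_{n+1}$; by \Cref{LiftingEquivFiberSeq} there is a fiber sequence $M\to E\xrightarrow{\varpi}L$ in $\Mod(A_{n+1})$, and $E$ is a discrete finitely generated $\pi_0(A_{n+1})$-module by \Cref{DiscreteLiftingNil}. Enlarging the finite free $A$-module $P$ so that $\pi_0(P)$ surjects onto $\pi_0(E)$, lift $P\to E$ and reduce the resulting free cover of $E$ over $A_{n+1}$ together with the induced cover $P\otimes^L_A A_{n+1}\to L$ to a commuting diagram of fiber sequences, whose ``difference'' is a fiber sequence of the form $\Omega_{A_{n+1}}(E)\to\Omega_{A_{n+1}}(L)\to M$ in $\Mod(A_{n+1})$. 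Tracking this down the tower $A_{n+1}\to A_n\to\dots\to A_1$ --- using \Cref{DerivedTensor} to compute the homotopy groups of the relevant derived quotients $L\otimes^L_{A_m}A_i$ and \Cref{DiscreteLiftingNil} to keep the pertinent modules discrete --- one matches the associated long exact sequences of homotopy groups with (\ref{ThetaL}) and (\ref{SyzygyM}) and reads off a retraction of $\pi_0(\alpha)$; this is $(a)$, hence $(c)$. Conversely, assume $(c)$. The isomorphism $\Omega_A(L)/^L x\cong M\oplus\Omega_{A_1}(M)$, combined with the diagram (\ref{DiagramFiberSequence}) and the splitting (\ref{LxLCoproduct}) of $L/^L x$, furnishes exactly the gluing datum needed to build, over $A_{n+1}$, a module $E$ in a fiber sequence $M\to E\to L$ with $\pi_0(E\otimes^L_{A_{n+1}}A_1)\cong M$; \Cref{LiftingEquivFiberSeq} then identifies $E$ as a lifting of $L$ to $A_{n+1}$, which is $(d)$.

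I expect $(c)\Leftrightarrow(d)$ to be the main obstacle. Each direction runs parallel to the explicit computations of \cite[Lemma 1.5]{auslander1993Liftings}, but the animated setting requires genuine care: the rings $A_n$, $A_{n+1}$ are not discrete, so the short exact sequences there become fiber sequences, and one must check at each step that the intervening syzygy modules are connective (respectively discrete, respectively finitely generated) wherever the argument manipulates actual modules rather than complexes --- this is exactly what \Cref{DiscreteLiftingNil}, \Cref{DerivedTensor}, and \Cref{TriangleDistinguishedTriangle} are there to supply. Finally, the closing assertion is immediate: any lifting $E$ of $L$ to $A_{n+1}$ is a lifting of the discrete finitely generated $A_n$-module $L$, so \Cref{DiscreteLiftingNil} makes $E$ discrete and finitely generated.
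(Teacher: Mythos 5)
Your overall decomposition — first the formal equivalences $(a)\Leftrightarrow(b)\Leftrightarrow(c)$, then the substantive $(c)\Leftrightarrow(d)$ — matches the paper's, and the treatment of the formal part is essentially correct: the paper uses the factorization $\pi_0(\alpha)\colon M\to\Omega_A(L)/^L x\to\pi_0(\Omega_A(L)/^L x)$ rather than your $\Ext^0\cong\Hom_{\pi_0}$ identification, but these are the same observation dressed differently.

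The problem is that the half of the lemma you yourself flag as the main obstacle, $(c)\Leftrightarrow(d)$, is not actually proved. For $(c)\Rightarrow(d)$ you write that the isomorphism $\Omega_A(L)/^L x\cong M\oplus\Omega_{A_1}(M)$, together with (\ref{DiagramFiberSequence}) and (\ref{LxLCoproduct}), ``furnishes exactly the gluing datum needed to build, over $A_{n+1}$, a module $E$.'' But no such $E$ is constructed, and the existence of a gluing datum is precisely what needs to be exhibited. The paper's construction is concrete: from a retraction $\beta$ of $\alpha$ one forms $\gamma\colon\Omega_A(L)\to\Omega_A(L)/^L x\xrightarrow{\beta}M$, then takes $E$ to be the cofiber of the composite $L[-1]\to\Omega_A(L)\xrightarrow{\gamma}M$ — this $E$ automatically sits in a fiber sequence $M\to E\to L$ — and one must still verify, by passing to $-/^L x$ and chasing homotopy groups in the resulting diagram (\ref{DiscreteDiagram}), that $\pi_0(E\otimes^L_{A_{n+1}}A_1)\cong E/xE\cong M$ so that \Cref{LiftingEquivFiberSeq} applies. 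None of this is visible in your sketch.

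For $(d)\Rightarrow(c)$ the sketch has a similar gap and an unnecessary complication. There is no need to ``enlarge'' $P$ so that $\pi_0(P)\twoheadrightarrow\pi_0(E)$ (this would change $\Omega_A(L)$ and hence $\alpha$ itself, so it cannot be done without care); projectivity of the chosen $P$ already gives a lift $\gamma'\colon P\to E$ of $P\to L$ along $E\to L$, which induces $\gamma\colon\Omega_A(L)\to M$ and hence the same diagram (\ref{DiscreteDiagram}) as in the other direction. The retraction of $\pi_0(\alpha)$ does not simply ``read off'' from matching long exact sequences; the paper's argument is that the hypothesis $E/xE\cong M$ forces the surjective endomorphism $\pi_0(f/^Lx)\colon M\to M$ to be an isomorphism (a Noetherian module has no proper surjective endomorphisms), whence the map $M\cong L/xL\to M$ in (\ref{DiscreteDiagram}) is an isomorphism and its inverse produces the splitting of $\pi_0(\alpha)$. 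You need this Noetherian step, and it is not present. As written, the proposal correctly identifies the skeleton of the argument but leaves out the constructions and the diagram chases that constitute the actual proof of $(c)\Leftrightarrow(d)$.
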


\begin{proof}
    Note that \(L\) is a discrete finitely generated \(A_n\)-module by \Cref{DiscreteLiftingNil}.
    (b) \(\Leftrightarrow\) (c) and (b) \(\Rightarrow\) (a) are clear.
    If (a) holds, we can take a map \(\map{\beta_0}{\pi_0(\Omega_A(L)/^L x)}{M}\) of discrete \(\pi_0(A_1)\)-modules such that the composition \(M \xrightarrow{\pi_0(\alpha)} \pi_0(\Omega_A(L)/^L x) \xrightarrow{\beta_0} M\) is the identity map.
    Since \(M\) is discrete, \(\pi_0(\alpha)\) can be written as \(M \xrightarrow{\alpha} \Omega_A(L)/^L x \to \pi_0(\Omega_A(L)/^L x)\).
    Then the composition \(\Omega_A(L)/^L x \to \pi_0(\Omega_A(L)/^L x) \xrightarrow{\beta_0} M\) gives the splitting of \(\alpha\) and thus (b) holds.

    We prove (b) \(\Rightarrow\) (d).
    Since \(\theta_L'\) splits in \(\mathrm{h}\Mod(A_1)\), we have a map \(\map{\beta}{\Omega_A(L)/^L x}{M}\) in \(\Mod(A_1)\) such that the composition \(M \xrightarrow{\alpha} \Omega_A(L)/^L x \xrightarrow{\beta} M\) is (homotopically equivalent to) the identity map in \(\Mod(A_1)\).
    Set the map \(\gamma \colon \Omega_A(L) \to \Omega_A(L)/^L x \xrightarrow{\beta} M\) in \(\Mod(A)\).
    Then there exists an \(A_n\)-module \(E\) and the following commutative diagram in \(\Mod(A)\)
    \begin{equation} \label{DiagramSplit}
        \begin{tikzcd}
            \Omega_A(L) \arrow[r] \arrow[d, "\gamma"] & P \arrow[r] \arrow[d] & L \arrow[d, Rightarrow, no head] \\
            {M} \arrow[r, "g"]                          & E \arrow[r, "f"]   & L                               
        \end{tikzcd}
    \end{equation}
    whose upper and lower horizontal sequences are fiber sequences in \(\Mod(A)\).
    In fact, \(E\) can be taken as the cofiber of the composition \(L[-1] \to \Omega_A(L) \xrightarrow{\gamma} M\) in \(\Mod(A_n)\) and we use the fact that the restriction of scalar preserves small limits and colimits (\cite[Corollary 4.2.3.7]{lurie2017Higher}).
    Since \(L\) is discrete, so is \(E\).
    By \Cref{LiftingEquivFiberSeq}, we want to show that there exists an isomorphism \(\pi_0(E \otimes^L_{A_{n+1}} A_1) \cong M\) of \(\pi_0(A_1)\)-modules.

    Taking derived quotient \(-/^L x\) of the above diagram, we have a following commutative diagram in \(\Mod(A_1)\);
    \begin{equation*}
        \begin{tikzcd}
            &  & M \arrow[d, "\alpha"] \arrow[lld, hook]                                      &                                                  &                             &                                        \\
            {L/^L x[-1]} \arrow[dd, Rightarrow, no head] \arrow[rr] &  & \Omega_A(L)/^L x \arrow[rr] \arrow[dd, "\gamma/^L x"'] \arrow[rd, two heads] &                                                  & P/^L x \arrow[r] \arrow[dd] & L/^L x \arrow[dd, Rightarrow, no head] \\
            &  &                                                                              & (\Omega_A(L)/^L x)/^L x \arrow[ld, "\beta/^L x"'] &                             &                                        \\
            {L/^L x[-1]} \arrow[rr]                                 &  & M/^L x \arrow[rr, "g/^L x"]                                                       &                                                  & E/^L x \arrow[r, "f/^L x"]       & L/^L x.                                
        \end{tikzcd}
    \end{equation*}
    Taking long exact sequences, we have a following commutative diagram of discrete \(\pi_0(A_1)\)-modules;
    \begin{equation} \label{DiscreteDiagram}
        \begin{tikzcd}[column sep=small]
            &  & M \arrow[d, "\pi_0(\alpha)"] \arrow[lld, "\cong"']                                       &                                                                 &                                    &                                                &                                   \\
            L/xL \arrow[dd, Rightarrow, no head] \arrow[rr] &  & \pi_0(\Omega_A(L)/^L x) \arrow[rr] \arrow[dd, "\pi_0(\gamma/^L x)"'] \arrow[rd, "\cong"] &                                                                 & \pi_0(P/^L x) \arrow[r] \arrow[dd] & L/xL \arrow[dd, Rightarrow, no head] \arrow[r] & 0 \arrow[dd, Rightarrow, no head] \\
            &  &                                                                                          & \pi_0((\Omega_A(L)/^L x)/^L x) \arrow[ld, "\pi_0(\beta/^L x)"'] &                                    &                                                &                                   \\
            L/xL \arrow[rr]                                 &  & M \arrow[rr, "\pi_0(g/^L x)"]                                                            &                                                                 & E/xE \arrow[r, "\pi_0(f/^L x)"]    & L/xL. \arrow[r]                                & 0                                
        \end{tikzcd}
    \end{equation}
    The isomorphism \(\pi_0(\Omega_A(L)/^L x) \cong \pi_0((\Omega_A(L)/^L x)/^L x)\) is the identity map and thus \(\pi_0(\beta/^L x) = \pi_0(\beta)\). So the map \(M \cong L/xL \to \pi_0(\Omega_A(L)/^L x) \xrightarrow{\pi_0(\gamma/^L x)} M\) is equal to \(\pi_0(\beta) \circ \pi_0(\alpha) = \id\).
    In particular, the left lower map \(L/xL \to M\) is surjective and thus \(\map{\pi_0(f/^L x)}{E/xE}{L/xL}\) is an isomorphism.

    We next prove (d) \(\Rightarrow\) (a).
    Let \(E\) be a lifting of \(L\) to \(A_{n+1}\). By \Cref{DiscreteLiftingNil} and \Cref{LiftingEquivFiberSeq}, \(E\) is a discrete finitely generated \(A_{n+1}\)-module and there exists a fiber sequence \(M \xrightarrow{g} E \xrightarrow{f} L\) in \(\Mod(A_{n+1})\) and an isomorphism \(E/xE \cong M\) of \(\pi_0(A_1)\)-modules.
    Take the fiber sequence \(\Omega_A(L) \to P \to L\) in \(\Mod(A)\) defined in (\ref{FibL}).
    Since \(P\) is projective over \(A\), there exists a map \(\map{\gamma'}{P}{E}\) such that \(P \xrightarrow{\gamma'} E \twoheadrightarrow L\) is \(P \to L\).
    Since \(M \xrightarrow{g} E \xrightarrow{f} L\) and \(\Omega_A(L) \to P \to L\) are fiber sequences, \(\gamma'\) induces a map \(\map{\gamma}{\Omega_A(L)}{M}\) of \(A\)-modules making the same diagram in (\ref{DiagramSplit}) commutative.
    As in the proof of (b) \(\Rightarrow\) (d), we obtain the same commutative diagram in (\ref{DiscreteDiagram}).
    In particular, \(f\) induces a surjective endomorphism \(\pi_0(f/^L x) \colon M \twoheadrightarrow M\) on the Noetherian \(\pi_0(A_1)\)-module \(M\) because of \(E/xE \cong M \cong L/xL\) and thus \(\pi_0(f/^L x)\) is furthermore injective.
    This shows the surjectivity of the (left lower) map \(M \cong L/xL \to M\) in (\ref{DiscreteDiagram}) and thus this is also an isomorphism. Set the inverse to be \(\phi\).
    By the definition of the map \(\alpha \colon M \cong L/xL \to L/^L x[-1] \to \Omega_A(L)/^L x\) in (\ref{DefAlpha}), the inverse map \(M \xrightarrow{\phi} L/xL\) gives the split \(s \colon \pi_0(\Omega_A(L)/^L x) \xrightarrow{\pi_0(\gamma/^L x)} M \xrightarrow{\phi} L/xL \cong M\) of \(\map{\pi_0(\alpha)}{M}{\pi_0(\Omega_A(L)/^L x)}\) by the left pentagon in (\ref{DiscreteDiagram}).
\end{proof}

\begin{lemma} \label{SplitLifting}
    Let \(A\) be an almost perfect animated \(\Lambda\)-algebra and let \(x\) be an element of \(\mfrakm\).
    Take a discrete finitely generated \(A_1\)-module \(M\) and its lifting \(L\) to \(A_n\).
    Then there exists an element \(\mathrm{ob}(M, L, n)\) in \(\Ext^2_{A_1}(M, M)\) such that its vanishing is equivalent to the existence of a discrete finitely generated \(A_{n+1}\)-module \(E\) which is a lifting of the \(A_n\)-module \(L\) to \(A_{n+1}\).
    In particular, if \(\Ext^2_{A_1}(M, M)\) vanishes, then there exists such an \(E\).
\end{lemma}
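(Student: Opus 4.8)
The plan is to read off the obstruction class from the fiber sequence $\theta_L'$ produced in \Cref{LiftingExtElement} and to transport it into $\Ext^2$ by a one-step dimension shift. Recall from \Cref{LiftingExtElement} the fiber sequence
\[
    \theta_L' \colon M \xrightarrow{\alpha} \Omega_A(L)/^L x \xrightarrow{r} \Omega_{A_1}(M)
\]
in $\Mod(A_1)$ given by (\ref{ThetaL'}), together with its extension to the distinguished triangle $M \xrightarrow{\alpha} \Omega_A(L)/^L x \xrightarrow{r} \Omega_{A_1}(M) \xrightarrow{\delta} M[1]$ in $\mathrm{h}\Mod(A_1)$ coming from (\ref{DistinguishedTriangleR}); the connecting morphism $\delta$ defines a class $[\delta] \in \Ext^1_{A_1}(\Omega_{A_1}(M), M)$. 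Recall also the fiber sequence $\Omega_{A_1}(M) \xrightarrow{j} P/^L x \xrightarrow{q} M$ of (\ref{SyzygyM}), where $P$ is the finite free $A$-module chosen in \Cref{LiftingExtElement}; hence $P/^L x \cong P \otimes^L_A A_1 \cong A_1^{\oplus k}$ is a finite free $A_1$-module for some $k \geq 0$, and since $M$ is discrete we get $\Ext^i_{A_1}(P/^L x, M) \cong (\pi_{-i} M)^{\oplus k} = 0$ for every $i \neq 0$.

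First I would apply $\mapspt_{A_1}(-, M)$ to the fiber sequence $\Omega_{A_1}(M) \xrightarrow{j} P/^L x \xrightarrow{q} M$ and pass to the long exact sequence of $\Ext$-modules; the vanishing just observed forces the connecting map to be an isomorphism
\[
    \partial \colon \Ext^1_{A_1}(\Omega_{A_1}(M), M) \xrightarrow{\ \cong\ } \Ext^2_{A_1}(M, M).
\]
I then set $\mathrm{ob}(M, L, n) \defeq \partial([\delta]) \in \Ext^2_{A_1}(M, M)$; concretely this is the composite $M \to \Omega_{A_1}(M)[1] \xrightarrow{\delta[1]} M[2]$, where the first arrow is the connecting morphism of the fiber sequence $\Omega_{A_1}(M) \xrightarrow{j} P/^L x \xrightarrow{q} M$.

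The equivalence now falls out. As $\partial$ is an isomorphism, $\mathrm{ob}(M, L, n) = 0$ if and only if $[\delta] = 0$. In the triangulated category $\mathrm{h}\Mod(A_1)$, a distinguished triangle $M \xrightarrow{\alpha} \Omega_A(L)/^L x \xrightarrow{r} \Omega_{A_1}(M) \xrightarrow{\delta} M[1]$ is split --- that is, $\alpha$ admits a retraction, equivalently $\Omega_A(L)/^L x \cong M \oplus \Omega_{A_1}(M)$ compatibly with $\alpha$ and $r$ --- exactly when its connecting morphism $\delta$ vanishes; this is the standard characterization of split triangles. Hence $\mathrm{ob}(M, L, n) = 0$ if and only if $\theta_L'$ splits in $\mathrm{h}\Mod(A_1)$, which by \Cref{LiftingSplits} (equivalence of conditions (b) and (d)) is exactly the existence of a lifting $E$ of the $A_n$-module $L$ to $A_{n+1}$; by \Cref{DiscreteLiftingNil} every such $E$ is then automatically a discrete finitely generated $A_{n+1}$-module, so the two formulations of the right-hand side agree. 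The final assertion is the special case in which $\Ext^2_{A_1}(M, M) = 0$, since then $\mathrm{ob}(M, L, n)$ is necessarily $0$.

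Essentially all the substance has already been carried out in \Cref{LiftingExtElement} and \Cref{LiftingSplits}, so the only new input is the dimension-shift isomorphism $\partial$ (which rests on the finite freeness of $P/^L x$ over $A_1$) and the elementary triangulated-category fact that a triangle splits precisely when its connecting morphism is zero. The one point requiring a moment's care is the bookkeeping identification of ``$\theta_L'$ splits as a fiber sequence in the sense of \Cref{LiftingSplits}(b)'' with ``$\delta = 0$'', but this is formal.
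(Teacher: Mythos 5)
Your proof is correct and is essentially the paper's own argument: the paper defines $\mathrm{ob}'(M,L,n)$ as the image of $\mathrm{id}_M$ under the connecting map $\Hom_{A_1}(M,M)\to\Ext^1_{A_1}(\Omega_{A_1}(M),M)$ coming from $\theta_L'$, which by the standard identification is exactly your class $[\delta]$ of the connecting morphism, and then transports it through the same dimension-shift isomorphism (from the projectivity of $P/^L x$) to $\Ext^2_{A_1}(M,M)$, invoking \Cref{LiftingSplits} (b) $\Leftrightarrow$ (d) at the end just as you do. The only cosmetic difference is that you phrase the vanishing criterion via ``the triangle splits iff $\delta=0$'' while the paper phrases it via ``$\alpha$ admits a retraction iff $\mathrm{ob}'=0$,'' which are the same triangulated-category fact.
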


\begin{proof}
    Applying \Cref{DiscreteLiftingNil} for \(M\) and \(L\), we can show that \(L\) is discrete.
    By \Cref{LiftingSplits}, it suffices to show that the map \(\map{\alpha}{M}{\Omega_A(L)/^L x}\) in the fiber sequence \(\theta_L'\) (\ref{ThetaL'}) splits in \(\mathrm{h}\Mod(A_1)\).
    For the fiber sequence \(\theta_L'\), we can take the long exact sequence of \(\Ext\)-groups
    \begin{equation*}
        \Hom_{A_1}(\Omega_A(L)/^L x, M) \xrightarrow{- \circ \alpha} \Hom_{A_1}(M, M) \to \Ext^1_{A_1}(\Omega_{A_1}(M), M).
    \end{equation*}
    Set the element \(\mathrm{ob}'(M, L, n)\) in \(\Ext^1_{A_1}(\Omega_{A_1}(M), M)\) to be the image of \(\id_M \in \Hom_{A_1}(M, M)\).
    The existence of a splitting of \(\alpha\) is equivalent to the vanishing of \(\mathrm{ob}'(M, L, n)\) by the long exact sequence.
    By the fiber sequence (\ref{SyzygyM}), we have a fiber sequence of \(A_1\)-modules
    \begin{equation*}
        \Ext^1_{A_1}(P/^L x, M) \to \Ext^1_{A_1}(\Omega_{A_1}(M), M) \to \Ext^2_{A_1}(M, M).
    \end{equation*}
    The free \(A_1\)-module \(P/^L x = (\oplus A)/^L x \cong \oplus A_1\) is projective over \(A_1\). The first term vanishes by \cite[Proposition 7.2.2.6 and Proposition 7.2.2.7]{lurie2017Higher}.
    So we have an isomorphism \(\Ext^1_{A_1}(\Omega_{A_1}(M), M) \xrightarrow{\cong} \Ext^2_{A_1}(M, M)\).
    Set \(\mathrm{ob}(M, L, n)\) as the image of \(\mathrm{ob}'(M, L, n) \in \Ext^1_{A_1}(\Omega_{A_1}(M), M)\) in \(\Ext^2_{A_1}(M, M)\).
    By the above argument, the vanishing of \(\mathrm{ob}(M, L, n)\) is equivalent to the existence of a lifting \(E\) of \(L\) to \(A_{n+1}\).
\end{proof}

\begin{lemma} \label{LiftingTheorem}
    Let \(A\) be an almost perfect animated \(\Lambda\)-algebra and let \(x\) be an element of \(\mfrakm\).
    Let \(M\) be a discrete finitely generated \(A_1\)-module.
    If \(\Ext^2_{A_1}(M, M)\) vanishes, there exists a discrete finitely generated \(A\)-module \(L\) which is a lifting of the \(A_1\)-module \(M\) to \(A\).
\end{lemma}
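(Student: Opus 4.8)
The plan is to construct, by induction on $n$, a tower $\{L_n\}_{n \geq 1}$ of liftings along the maps $A \to A_1 \leftarrow A_2 \leftarrow \cdots$ and then invoke \Cref{LiftingSequence}. Set $L_1 \defeq M$; this is a discrete finitely generated $A_1$-module and is (trivially) a lifting of $M$ to $A_1$. For the inductive step, suppose $L_n$ is a discrete finitely generated $A_n$-module which is a lifting of $M$ to $A_n$, i.e. $L_n \otimes^L_{A_n} A_1 \cong M$. Since $\Ext^2_{A_1}(M, M) = 0$, \Cref{SplitLifting} applied to the pair $(M, L_n)$ produces a discrete finitely generated $A_{n+1}$-module $L_{n+1}$ which is a lifting of the $A_n$-module $L_n$ to $A_{n+1}$: the relevant obstruction $\mathrm{ob}(M, L_n, n)$ lies in $\Ext^2_{A_1}(M, M)$, a group that depends neither on $n$ nor on $L_n$, so the single vanishing hypothesis suffices at every stage of the induction.

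Next I would check that $L_{n+1}$ is again a lifting of $M$ itself (not merely of $L_n$), so that the induction propagates. This is the base-change transitivity along $A_{n+1} \to A_n \to A_1$:
\[
    L_{n+1} \otimes^L_{A_{n+1}} A_1 \cong \bigl(L_{n+1} \otimes^L_{A_{n+1}} A_n\bigr) \otimes^L_{A_n} A_1 \cong L_n \otimes^L_{A_n} A_1 \cong M,
\]
where the middle isomorphism uses that $L_{n+1}$ lifts $L_n$ to $A_{n+1}$ and the last is the inductive hypothesis; in particular $\pi_0(L_{n+1})$ is finite over $\pi_0(A_{n+1})$, which is what is needed to feed $(M, L_{n+1})$ back into \Cref{SplitLifting}. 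With the tower $\{L_n\}_{n \geq 1}$ in hand, each $L_n$ an $A_n$-module with $L_{n+1}$ a lifting of $L_n$ to $A_{n+1}$ and $L_1 = M$, condition (3) of \Cref{LiftingSequence} holds, so $M$ is liftable to $A$: there is an $A$-module $L$ with $\pi_0(L)$ finite over $\pi_0(A)$ and $L \otimes^L_A A_1 \cong M$. Finally, the last assertion of \Cref{LiftingSequence} (equivalently, \Cref{DiscreteLiftingComp} with $k = 1$) shows that any such $L$ is automatically a discrete finitely generated $A$-module, which completes the argument.

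I do not expect a serious obstacle here, since the substantive work has already been isolated: \Cref{SplitLifting} contains the obstruction-theoretic lifting step from $A_n$ to $A_{n+1}$, and \Cref{LiftingSequence} contains the passage to the inverse limit over the tower (using that $\pi_0(A)$ is $\mfrakm$-adically complete, which holds as $A$ is almost perfect over $\Lambda$ by \Cref{NoetherianAlmostPerfect}). The only points requiring care are purely bookkeeping: that the obstruction at every stage lives in the fixed group $\Ext^2_{A_1}(M, M)$, so one hypothesis drives the whole induction, and the transitivity computation above guaranteeing that ``being a lifting of $M$'' is preserved as one climbs the tower.
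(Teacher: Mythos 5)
Your proposal is correct and matches the paper's own proof essentially step for step: set $L_1 := M$, iterate \Cref{SplitLifting} to build the tower, and invoke \Cref{LiftingSequence}. The only difference is that you explicitly verify the transitivity computation $L_{n+1} \otimes^L_{A_{n+1}} A_1 \cong M$ needed so that \Cref{SplitLifting} (whose hypothesis requires a lifting of $M$ itself) can be reapplied, and you note that the obstruction always lives in the fixed group $\Ext^2_{A_1}(M,M)$ — bookkeeping the paper leaves implicit but which is exactly the point that makes a single vanishing hypothesis drive the whole induction.
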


\begin{proof}
    Set \(L_1 \defeq M\).
    By \Cref{SplitLifting} and our assumption, there exists a discrete finitely generated \(A_2\)-module \(L_2\) such that \(L_2\) is a lifting of the \(A_1\)-module \(L_1\) to \(A_2\).
    Again, by using \Cref{SplitLifting} for \(L_2\) and \(M\), we can take a discrete finitely generated lifting \(L_3\) of the \(A_2\)-module \(L_2\) to \(A_3\).
    By repeating this process, we can take a sequence \(\{L_n\}_{n \geq 0}\) such that each \(L_n\) is a discrete finitely generated \(A_n\)-module and \(L_{n+1}\) is a lifting of the \(A_n\)-module \(L_n\) to \(A_{n+1}\).
    By \Cref{LiftingSequence}, the \(A_1\)-module \(M\) has a lifting \(L\) to \(A\).
\end{proof}



Under these preparations, we can prove the following lifting theorem.

\begin{theorem} \label{LiftingCorollary}
    Let \(A\) be an almost perfect animated \(\Lambda\)-algebra and let \(x_1, \dots, x_t\) be a sequence of elements of \(\mfrakm\).
    Set \(\Gamma_j \defeq A/^L(x_1, \dots, x_j)\) for each \(1 \leq j \leq t\).
    Let \(M\) be a discrete finitely generated \(\Gamma_t\)-module.
    If \(\Ext^2_{\Gamma_t}(M, M)\) vanishes, then there exists a discrete finitely generated \(A\)-module \(L\) which is a lifting of the \(\Gamma_t\)-module \(M\) to \(A\).
\end{theorem}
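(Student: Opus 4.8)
The plan is to induct on $t$. The base case $t=1$ is precisely \Cref{LiftingTheorem} (with $x=x_1$), and $t=0$ is vacuous since $\Gamma_0 = A$. For the inductive step I would first record two structural facts: (i) the derived quotient by a sequence can be formed one element at a time, so that $\Gamma_t \cong \Gamma_{t-1}/^L x_t$, where $x_t$ denotes the image of $x_t$ in $\pi_0(\Gamma_{t-1})$; and (ii) $\Gamma_{t-1}$ is again an almost perfect animated $\Lambda$-algebra --- indeed $\pi_0(\Gamma_{t-1}) = \pi_0(A)/(x_1,\dots,x_{t-1})$ is finite over $\Lambda$ (as $\pi_0(A)$ is, by \Cref{NoetherianAlmostPerfect}) and $\Gamma_{t-1}$ is Noetherian as an iterated derived quotient of the Noetherian animated ring $A$, so \Cref{NoetherianAlmostPerfect} applies again. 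Thus \Cref{LiftingTheorem} is available with $\Gamma_{t-1}$ in the role of $A$ and $x_t$ in the role of $x$.

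Applying \Cref{LiftingTheorem} to the discrete finitely generated $\Gamma_t = \Gamma_{t-1}/^L x_t$-module $M$ (using $\Ext^2_{\Gamma_t}(M,M)=0$) yields a discrete finitely generated $\Gamma_{t-1}$-module $L'$ with $L' \otimes^L_{\Gamma_{t-1}} \Gamma_t \cong M$; equivalently $L'/^L x_t \cong M$ by \Cref{LemDerivedQuotient}. The crucial point is then to verify that $\Ext^2_{\Gamma_{t-1}}(L', L') = 0$, so that the inductive hypothesis applies to $L'$. For this I would apply the exact functor $\mapspt_{\Gamma_{t-1}}(L', -)$ to the fiber sequence $L' \xrightarrow{\times x_t} L' \to M$ of (\ref{FiberSequenceModx}) and combine it with the base-change adjunction $\mapspt_{\Gamma_{t-1}}(L', M) \cong \mapspt_{\Gamma_t}(L' \otimes^L_{\Gamma_{t-1}} \Gamma_t, M) \cong \mapspt_{\Gamma_t}(M, M)$; passing to homotopy groups gives a long exact sequence of $\pi_0(\Gamma_{t-1})$-modules whose relevant portion is
\[
    \Ext^2_{\Gamma_{t-1}}(L', L') \xrightarrow{\times x_t} \Ext^2_{\Gamma_{t-1}}(L', L') \longrightarrow \Ext^2_{\Gamma_t}(M, M) = 0 .
\]
Hence $\times x_t$ is surjective on $\Ext^2_{\Gamma_{t-1}}(L', L')$. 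Since $L'$ is discrete and finitely generated over the Noetherian ring $\Gamma_{t-1}$, it is almost perfect (\Cref{EquivAlmostPerfect}), so $\Ext^2_{\Gamma_{t-1}}(L', L')$ is a finitely generated $\pi_0(\Gamma_{t-1})$-module by \Cref{AlmostPerfMapFin}; and $x_t \in \mfrakm$ lies in the Jacobson radical of $\pi_0(\Gamma_{t-1})$, because $\pi_0(\Gamma_{t-1})$ is module-finite over the complete local ring $\Lambda$ and hence semilocal with every maximal ideal lying over $\mfrakm$. Nakayama's lemma then forces $\Ext^2_{\Gamma_{t-1}}(L', L') = 0$.

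With this in hand, the inductive hypothesis applied to the $\Gamma_{t-1} = A/^L(x_1,\dots,x_{t-1})$-module $L'$ produces a discrete finitely generated $A$-module $L$ with $L \otimes^L_A \Gamma_{t-1} \cong L'$, and then
\[
    L \otimes^L_A \Gamma_t \cong (L \otimes^L_A \Gamma_{t-1}) \otimes^L_{\Gamma_{t-1}} \Gamma_t \cong L' \otimes^L_{\Gamma_{t-1}} \Gamma_t \cong M ,
\]
so $L$ is the required lifting. I expect the main obstacle to be the descent step, i.e.\ deducing $\Ext^2_{\Gamma_{t-1}}(L', L') = 0$ from $\Ext^2_{\Gamma_t}(M, M) = 0$; this is exactly where the finiteness of the $\Ext$-module over the Noetherian ring $\pi_0(\Gamma_{t-1})$ (via \Cref{AlmostPerfMapFin}) and the fact that $x_t$ is a non-unit in the Jacobson radical --- which rests on the completeness and locality of $\Lambda$ --- are both used. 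The remaining steps are routine base-change manipulations built on the single-element theorem \Cref{LiftingTheorem}.
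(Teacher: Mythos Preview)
Your proposal is correct and follows essentially the same route as the paper's proof: both lift one variable at a time via \Cref{LiftingTheorem}, then use the fiber sequence $L' \xrightarrow{\times x_t} L' \to M$ together with the tensor--forgetful adjunction and Nakayama's lemma on the finitely generated $\Ext^2$ (via \Cref{AlmostPerfMapFin}) to propagate the vanishing of $\Ext^2$ down to $\Gamma_{t-1}$ and iterate. Your remark that $\pi_0(\Gamma_{t-1})$ is only semilocal (with $x_t$ in the Jacobson radical) rather than local is in fact more precise than the paper's wording, but the Nakayama step goes through in either formulation.
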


\begin{proof}
    Since \(A\) is almost perfect, so is \(\Gamma_j\) for each \(j\).
    Set \(L_t \defeq M\).
    Our assumption is that \(\Ext^2_{\Gamma_t}(L_t, L_t)\) vanishes.
    We can apply \Cref{LiftingTheorem} when we set \(M \defeq L_t\), \(A \defeq \Gamma_{t-1}\), and \(x \defeq x_t\).
    Then there exists a discrete finitely generated \(\Gamma_{t-1}\)-module \(L_{t-1}\) and an isomorphism \(L_{t-1} \otimes^L_{\Gamma_{t-1}} \Gamma_t \cong M\) of \(\Gamma_t\)-modules.

    Next, we iterate this process.
    The derived quotient \(\Gamma_t = \Gamma_{t-1}/^L x_t\) gives a fiber sequence \(L_{t-1} \xrightarrow{\times x_t} L_{t-1} \to M\) of \(\Gamma_{t-1}\)-modules.
    Applying \(\Ext^i_{\Gamma_{t-1}}(L_{t-1}, -)\) for the above fiber sequence, we have an exact sequence of \(\pi_0(\Gamma_{t-1})\)-modules
    \begin{equation} \label{ExtLongExactSequenceIterate}
        \Ext^i_{\Gamma_{t-1}}(L_{t-1}, L_{t-1}) \xrightarrow{\times x_{t-1}} \Ext^i_{\Gamma_{t-1}}(L_{t-1}, L_{t-1}) \to \Ext^i_{\Gamma_{t-1}}(L_{t-1}, L_t).
    \end{equation}
    The tensor-forgetful adjunction and our assumption give the isomorphism
    \begin{equation*}
        \Ext^2_{\Gamma_{t-1}}(L_{t-1}, L_t) \cong \Ext^2_{\Gamma_t}(L_{t-1} \otimes^L_{\Gamma_{t-1}} \Gamma_t, L_t) \cong \Ext^2_{\Gamma_t}(L_t, L_t) = \Ext^2_{\Gamma_t}(M, M) = 0.
    \end{equation*}
    Since \(\pi_0(\Gamma_{t-1}) \cong \pi_0(A)/(x_1, \dots, x_{t-1})\) is an \(\mfrakm\)-adically complete Noetherian local ring and \(L_{t-1}\) is a discrete finitely generated \(\Gamma_{t-1}\)-module, the base change \(L_{t-1} \otimes^L_{\Gamma_{t-1}} \pi_0(\Gamma_{t-1})\) is almost perfect over \(\pi_0(\Gamma_{t-1})\) by \Cref{LemBaseChangePerfect} (b). So the \(\Ext\)-group \(\Ext^2_{\pi_0(\Gamma_{t-1})}(L_{t-1} \otimes^L_{\Gamma_{t-1}} \pi_0(\Gamma_{t-1}), L_{t-1}) \cong \Ext^2_{\Gamma_{t-1}}(L_{t-1}, L_{t-1})\) is also a discrete finitely generated \(\pi_0(\Gamma_{t-1})\)-module (this also follows from \Cref{AlmostPerfMapFin}).
    By Nakayama's lemma, the \(\Ext\)-group \(\Ext^2_{\Gamma_{t-1}}(L_{t-1}, L_{t-1})\) vanishes.

    Inductively, we can take a discrete and finite \(\Gamma_j\)-module \(L_j\) and an isomorphism \(L_j \otimes^L_{\Gamma_j} \Gamma_{j+1} \cong L_{j+1}\) of \(\Gamma_{j+1}\)-modules for each \(0 \leq j \leq t-1\) (we set \(\Gamma_0 \defeq A\)).
    The isomorphism
    \begin{align*}
        L_0 \otimes^L_{\Gamma_0} \Gamma_t & \cong L_0 \otimes^L_{\Gamma_0} \Gamma_1 \otimes^L_{\Gamma_1} \Gamma_t \cong L_1 \otimes^L_{\Gamma_1} \Gamma_t \\
        & \cong \cdots \cong L_{t-1} \otimes^L_{\Gamma_{t-1}} \Gamma_t \cong L_t = M
    \end{align*}
    of \(\Gamma_t\)-modules shows that the \(\Gamma_0\)-module \(L \defeq L_0\) is a lifting of the \(\Gamma_t\)-module \(M\) to \(A\).
\end{proof}

\section{Rerevance to Auslander's Zero-Divisor Theorem} \label{SectionZeroDivisor}

Once we try to apply \Cref{LiftingCorollary} for a regular local ring \(A\), the result is covered by Auslander--Ding--Solberg's result \cite{auslander1993Liftings}.
In this section, we show this result (\Cref{EquivLCI}) by combining our result (\Cref{ColimitPreserving}) and Auslander's zero-divisor theorem (\Cref{AuslanderZeroDivisor}).

Auslander \cite{auslander1961Modules} proposed the following so-called Auslander's zero-divisor conjecture, which already has been proved.

\begin{theorem}[{\cite{auslander1961Modules}}] \label{AuslanderZeroDivisor}
    Let \(A\) be a Noetherian local ring and let \(M\) be a non-zero discrete finitely generated \(A\)-module with finite projective dimension.
    If a sequence of elements \(f_1, \dots, f_r\) of \(A\) is a regular sequence on \(M\), then \(f_1, \dots, f_r\) is also a regular sequence on \(A\).
\end{theorem}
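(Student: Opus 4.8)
This is the classical zero-divisor theorem of Auslander, and the plan is not to give a self-contained argument: all known proofs ultimately rest on the New Intersection Theorem (established in \cite{peskine1973Dimension,hochster1974Equicharacteristic} in positive and mixed characteristic and in full generality in \cite{roberts1987Theoreme}), which I would invoke as a black box. The two moving parts are: first, a reduction to the case \(r = 1\) by induction on \(r\); second, for \(r = 1\), a localization at an associated prime of \(A\) containing \(f_1\), which isolates exactly the configuration that needs the New Intersection Theorem.

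For the inductive step I would argue as follows. Suppose \(r \geq 2\) and the result is known for shorter sequences. Since \(f_1, \dots, f_r\) is a regular sequence on \(M\), the element \(f_1\) is a non-zero-divisor on \(M\), so by the case \(r = 1\) it is a non-zero-divisor on \(A\). Then \(0 \to A \xrightarrow{f_1} A \to A/f_1A \to 0\) is exact and stays exact after \(- \otimes_A M\) (because \(f_1\) is regular on \(M\)), so \(\Tor_i^A(M, A/f_1A) = 0\) for \(i \geq 1\); hence reducing a finite free resolution of \(M\) over \(A\) modulo \(f_1\) produces a finite free resolution of \(\overline{M} \defeq M/f_1M\) over \(\overline{A} \defeq A/f_1A\), and \(\overline{M}\) is non-zero because \(\overline{M}/(\overline{f_2}, \dots, \overline{f_r})\overline{M} = M/(f_1, \dots, f_r)M \neq 0\). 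Since \(M/(f_1, \dots, f_i)M = \overline{M}/(\overline{f_2}, \dots, \overline{f_i})\overline{M}\), the sequence \(\overline{f_2}, \dots, \overline{f_r}\) is regular on \(\overline{M}\), so by the inductive hypothesis it is regular on \(\overline{A}\); together with the regularity of \(f_1\) on \(A\) this shows \(f_1, \dots, f_r\) is a regular sequence on \(A\).

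For \(r = 1\), write \(f \defeq f_1\) and assume for contradiction that \(f\) is a zero-divisor on \(A\), so \(f \in \mathfrak{p}\) for some \(\mathfrak{p} \in \operatorname{Ass}(A)\), say \(\mathfrak{p} = (0 :_A a)\) with \(a \neq 0\). From \(fa = 0\) one gets, for every \(m \in M\), that \(f(am) = (fa)m = 0\), hence \(am \in (0 :_M f) = 0\); thus \(aM = 0\), so \(a \in \operatorname{ann}_A(M)\) and in particular \(M_{\mathfrak{p}} = 0\). (If instead \(M_{\mathfrak{p}}\) were non-zero, the Auslander--Buchsbaum formula over \(A_{\mathfrak{p}}\), which has depth \(0\), would force \(\operatorname{pd}_{A_{\mathfrak{p}}} M_{\mathfrak{p}} = 0\), i.e.\ \(M_{\mathfrak{p}} \cong A_{\mathfrak{p}}^{\oplus s}\) with \(s \geq 1\); then \(f\) would be a non-zero-divisor on \(A_{\mathfrak{p}}^{\oplus s}\), hence on \(A_{\mathfrak{p}}\), contradicting that every element of \(\mathfrak{p}A_{\mathfrak{p}}\) is a zero-divisor.) So the whole difficulty is concentrated in excluding the remaining configuration: a non-zero finitely generated module of finite projective dimension that is annihilated by an element \(a\) whose annihilator is an associated prime of \(A\) (equivalently, with \(f \in \mathfrak{p} \in \operatorname{Ass}(A)\) and \(M_{\mathfrak{p}} = 0\)). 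Ruling this out is precisely what requires the New Intersection Theorem; the implication (New Intersection Theorem) \(\Rightarrow\) (Auslander's zero-divisor theorem) is classical (see \cite{roberts1987Theoreme}), and the main obstacle of the whole proof is exactly this genuinely deep step. I note that in the application to \Cref{EquivLCI}, where \(A\) is regular local and hence a domain, this step is vacuous: \(f\) being a non-zero-divisor on a non-zero module forces \(f \neq 0\), which already makes \(f\) a non-zero-divisor on \(A\).
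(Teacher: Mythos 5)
The paper does not actually prove \Cref{AuslanderZeroDivisor}: the statement is cited to \cite{auslander1961Modules} and the subsequent text simply points to the literature (Serre for regular rings, Peskine--Szpiro/Hochster/Roberts in general). So there is no in-paper proof to compare your proposal against. What you have written is a sketch of the classical reduction of the zero-divisor conjecture to the New Intersection Theorem; the overall architecture (induction on \(r\) via the Tor-vanishing argument, then for \(r=1\) localization at an associated prime \(\mathfrak{p}\ni f\) and reduction to the configuration \(M_{\mathfrak{p}}=0\), which the New Intersection Theorem rules out) is indeed the standard route and is correct in spirit, modulo the black box you explicitly acknowledge.

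Two remarks on the details. First, the step ``\(aM=0\), so \(a\in\operatorname{ann}_A(M)\) and in particular \(M_{\mathfrak{p}}=0\)'' quietly assumes \(a\notin\mathfrak{p}=(0:_Aa)\). That can fail precisely when \(a^2=0\) (for instance \(a=x\) in \(k[x]/(x^2)\) has \((0:a)=(x)\ni a\)). Your parenthetical argument via Auslander--Buchsbaum over \(A_{\mathfrak{p}}\) does establish \(M_{\mathfrak{p}}=0\) unconditionally, so the conclusion stands, but you should present that as the actual proof of \(M_{\mathfrak{p}}=0\) rather than as an optional alternative. Second, the closing claim that in the application to \Cref{EquivLCI} (where \(A\) is regular local, hence a domain) ``this step is vacuous'' is not correct. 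It is vacuous only at the first stage of the induction: once you pass to \(\overline{A}=A/f_1A\), and then to further successive quotients, these rings are generally no longer domains (they are Cohen--Macaulay, but not integral), so the \(r=1\) argument at stage \(i\geq 2\) genuinely requires an intersection-type theorem. This is consistent with the paper's own remark that even in the regular case the result ``is a consequence of Serre's intersection theorem'' --- not a triviality. If you want a statement that \emph{is} vacuous for regular \(A\), you would have to avoid the induction on \(r\) altogether and argue directly with Serre's theorem, which is a different proof.
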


If \(A\) is a regular local ring, this is a consequence of Serre's intersection theorem \cite{serre1975Algebre}.
The general statement follows from Peskine--Szpiro's intersection theorem which is developed and solved by Peskine--Szpiro \cite{peskine1969Topologie,peskine1973Dimension} in positive characteristic or essentially finite type over a field, Hochster \cite{hochster1974Equicharacteristic} in equicharacteristic, and Roberts \cite{roberts1987Theoreme} in mixed characteristic.
See also a survey \cite{roberts1989Intersection} or a book \cite[Theorem 6.2.3]{roberts1998Multiplicities}.
Moreover, this theorem is one of the members of the initial homological conjecture and thus is now a consequence of, in particular, direct summand conjecture (see, for example, \cite[\S 1.6]{dospinescu2023Conjecture}).

By using this, we can show the ``full circle'' result.

\begin{corollary} \label{EquivLCI}
    Let \(A\) be a regular local ring and let \(f_1, \dots, f_r\) be a sequence of elements of \(A\).
    Set \(A/^L\underline{f} = A/^L(f_1, \dots, f_r)\) and \(R \defeq A/(f_1, \dots, f_r)\)
    If there exists a discrete finitely generated non-zero \(R\)-module \(M\) such that \(\Ext^2_{A/^L\underline{f}}(M, M) = 0\), then \(f_1, \dots, f_r\) is a regular sequence of \(A\). 
\end{corollary}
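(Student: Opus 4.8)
The plan is to feed the hypotheses into \Cref{LiftingCorollary} to produce a finitely generated lift $L$ of $M$ to $A$, to observe that $L$ is nonzero of finite projective dimension because $A$ is regular, to reinterpret the discreteness of $M$ as the statement that $f_1,\dots,f_r$ is an $L$-regular sequence, and to finish with Auslander's zero-divisor theorem \Cref{AuslanderZeroDivisor}. To apply \Cref{LiftingCorollary} I would first reduce to the case that $A$ is complete: replacing $A$ by its $\mfrakm$-adic completion $\widehat A$, the ring $\widehat A$ is again regular local, $\widehat M\defeq M\otimes_A\widehat A$ is a nonzero discrete finitely generated module over $\widehat A/(f_1,\dots,f_r)$, one has $\widehat A/^L(f_1,\dots,f_r)\simeq(A/^L\underline f)\otimes_A\widehat A$, and $f_1,\dots,f_r$ is a regular sequence on $A$ if and only if it is one on $\widehat A$; moreover, since $M$ is almost perfect over $A/^L\underline f$ (\Cref{EquivAlmostPerfect}) the module $\Ext^2_{A/^L\underline f}(M,M)$ is finitely generated over $R$ by \Cref{AlmostPerfMapFin}, so by faithful flatness of $A\to\widehat A$ its vanishing is equivalent to that of $\Ext^2_{\widehat A/^L\underline f}(\widehat M,\widehat M)$. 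Then, assuming $A$ complete and taking $\Lambda\defeq A$, the ring $A$ is trivially an almost perfect animated $\Lambda$-algebra, every $f_i$ lies in $\mfrakm$ (otherwise $R=0$, contradicting the existence of $M$), and \Cref{LiftingCorollary} yields a discrete finitely generated $A$-module $L$ together with an isomorphism $L\otimes^L_A A/^L\underline f\cong M$ of $A/^L\underline f$-modules; since $M\neq 0$, also $L\neq 0$.

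Next I would restrict scalars to $A$. By \Cref{DefDerivedQuotient} the underlying complex of $A/^L\underline f$ is the Koszul complex $\Kos(f_1,\dots,f_r;A)$, a bounded complex of finite free $A$-modules, so the underlying complex of $M\cong L\otimes^L_A A/^L\underline f$ is $\Kos(f_1,\dots,f_r;L)=L\otimes_A\Kos(f_1,\dots,f_r;A)$. Since restriction of scalars does not change homotopy groups, the discreteness of $M$ then means precisely that $H_i(\Kos(f_1,\dots,f_r;L))=0$ for all $i\neq 0$. Because $L$ is a nonzero finitely generated module over the Noetherian local ring $A$ and $f_1,\dots,f_r\in\mfrakm$, this vanishing of positive-degree Koszul homology is equivalent to $f_1,\dots,f_r$ being an $L$-regular sequence, by the depth-sensitivity of the Koszul complex.

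Finally, $A$ is regular local, so the finitely generated module $L$ has finite projective dimension over $A$ by the Auslander--Buchsbaum--Serre theorem, and I can apply Auslander's zero-divisor theorem \Cref{AuslanderZeroDivisor} to the nonzero finite module $L$ of finite projective dimension: the $L$-regular sequence $f_1,\dots,f_r$ must then be a regular sequence on $A$, which is the assertion. The substantive content here is entirely \Cref{LiftingCorollary}, the rest being the classical dictionary between vanishing of Koszul homology and regular sequences and between regularity of $A$ and finite projective dimension; the only point that I expect to require care is the reduction to the complete case, specifically checking that $\Ext^2$-vanishing over the derived quotient descends along $A\to\widehat A$, where one uses the finite generation of this $\Ext$-module (\Cref{AlmostPerfMapFin}) together with flat base change for the mapping spectrum out of the almost perfect $A/^L\underline f$-module $M$.
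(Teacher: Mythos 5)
Your argument follows the paper's own proof step for step: lift \(M\) to a nonzero finitely generated \(A\)-module \(L\) via \Cref{LiftingCorollary}, note that \(L\) has finite projective dimension since \(A\) is regular, observe that the discreteness of \(M\cong L\otimes^L_A A/^L\underline f\cong\Kos(\underline f;L)\) forces \(\underline f\) to be an \(L\)-regular sequence, and conclude with Auslander's zero-divisor theorem \Cref{AuslanderZeroDivisor}. The paper invokes Koszul-regularity (via a Stacks Project tag) where you invoke depth sensitivity of the Koszul complex; for a finitely generated module over a Noetherian local ring with \(f_i\in\mfrakm\) these coincide, so that is only cosmetic. Your reduction to the complete case, however, is a genuine addition that the paper glosses over: \Cref{LiftingCorollary} is stated for an almost perfect animated \(\Lambda\)-algebra, where the running notation requires \(\Lambda\) to be a \emph{complete} Noetherian local ring, and by \Cref{NoetherianAlmostPerfect} this forces \(A\) itself to be \(\mfrakm\)-adically complete --- a hypothesis not stated in \Cref{EquivLCI}. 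Your passage to \(\widehat A\) is correct: \(\widehat A\) is again regular local, \(\widehat M\) is nonzero discrete finitely generated over \(\widehat R\), the \(\Ext^2\)-vanishing transfers (in fact only the trivial direction \(0\otimes_A\widehat A=0\) is needed, via flat base change of the mapping spectrum out of the almost perfect module \(M\)), and regularity of \(\underline f\) on \(\widehat A\) descends to \(A\) by faithful flatness. So your write-up is correct and, on this point, more careful than the paper's.
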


\begin{proof}
    If \(f_1, \dots, f_r\) is a regular sequence of \(A\), then \(R\) itself can be taken as a desired \(R\)-module \(M\).
    Conversely, by \Cref{LiftingCorollary}, there exists a discrete finitely generated non-zero \(A\)-module \(L\) such that \(L \otimes^L_A A/^L\underline{f} \cong M\). Since \(A\) is regular local, \(L\) has a finite projective dimension over \(A\).
    The derived tensor product \(M \cong L \otimes^L_A A/^L\underline{f} \cong L/^L (f_1, \dots, f_r)\) is discrete and thus \(f_1, \dots, f_r\) is a (Koszul-)regular sequence on \(L\) (see, for example, \citeSta{09CC}).
    The Auslander's zero-divisor theorem (\Cref{AuslanderZeroDivisor}) shows that \(f_1, \dots, f_r\) is a regular sequence on \(A\).
    We are done.
\end{proof}

This result was not mentioned in \cite{nasseh2013Liftings}, in which they proved a similar result of \Cref{LiftingCorollary} by using DG methods.

\begin{remark} \label{AssumptionDerived}
    The original assumption of the Auslander--Reiten conjecture (\Cref{AuslanderReiten}) is the vanishing of \(\Ext^i_{R}(M, M \oplus R)\) for each \(i \geq 1\).
    On the contrary, under the setting of \Cref{MainTheorem2Derived} (or more generally \Cref{LiftingCorollary}),  our assumption should be \(\Ext^i_{A/^L\underline{f}}(M, M \oplus R) = 0\) for each \(i \geq 1\).
    However, because of \Cref{EquivLCI} (this is nontrivial if we do not have our main theorem (\Cref{LiftingCorollary})), our assumption implies that \(f_1, \dots, f_r\) is a regular sequence on \(A\) and thus those two assumptions are (trivially) equivalent.

    Moreover, by \Cref{AssumptionLemma2} and \Cref{EquivLCI}, for a non-regular sequence \(f_1, \dots, f_r\) on the regular local ring \(A\), the vanishing \(\Ext^i_R(M, M)\) is strictly weaker than the vanishing \(\Ext^i_{A/^L\underline{f}}(M, M)\) for each \(i \geq 1\) since the latter vanishing implies the regularity of \(f_1, \dots, f_r\) on \(A\).
\end{remark}

\end{document}